\def\ov{\overline}
\def\mb{\mathbb}
\def\mr{\mathrm}
\def\mbSn{\mathbf{S}_n}
\def\mbS{\mathbf{S}}
\def\mbR{\mathbb{R}}
\def\mbZ{\mathbb{Z}}
\def\mbD{\mathbb{D}}
\newcommand{\mc}{\mathcal}
\newcommand{\oW}{\overline{W}}
\theoremstyle{plain}
\newtheorem{theorem}{Theorem}[section]
\newtheorem{corollary}[theorem]{Corollary}
\newtheorem{proposition}[theorem]{Proposition}
\newtheorem{lemma}[theorem]{Lemma}
\theoremstyle{definition}
\newtheorem{definition}[theorem]{Definition}
\newtheorem{remark}[theorem]{Remark}
\newtheorem{example}[theorem]{Example}
\numberwithin{equation}{section}
\begin{document}


\title[Finitely generated dyadic convex sets]{Finitely generated dyadic convex sets}

\author[Matczak]{K. Matczak$^1$}
\address{$^1$ Faculty of Civil Engineering, Mechanics and
Petrochemistry in P\l ock\\
Warsaw University of Technology\\
09-400 P\l ock, Poland}

\author[Mu\'{c}ka]{A. Mu\'{c}ka$^2$}
\address{$^2$ Faculty of Mathematics and Information Sciences\\
Warsaw University of Technology\\
00-662 Warsaw, Poland}

\author[Romanowska]{A.B. Romanowska$^3$}
\address{$^3$ Faculty of Mathematics and Information Sciences\\
Warsaw University of Technology\\
00-662 Warsaw, Poland}

\email{$^1$matczak.katarzyna@gmail.com\phantom{,}}
\email{$^2$Anna.Mucka@pw.edu.pl\phantom{,}}
\email{$^3$Anna.Romanowska@pw.edu.pl\phantom{,}}

\keywords{dyadic rational numbers, dyadic affine space, dyadic convex set, dyadic polytope, commutative binary mode, generating dyadic polytopes}

\subjclass[2010]{20N02, 08A05, 52B11, 52A01}

\thanks{\emph{Please cite as}:
Matczak, K.; Mućka, A.; Romanowska, A. B.; Finitely generated dyadic convex sets;
Internat. J. Algebra Comput. 33 (2023), no. 3, 585–615.
}

\date{March 2023}

\begin{abstract}

Dyadic rationals are rationals whose denominator is a power of $2$. We define \emph{dyadic $n$-dimensional convex sets} as the intersections with $n$-dimensional dyadic space of an $n$-dimensional real convex set. Such a dyadic convex set is said to be
a \emph{dyadic $n$-dimensional polytope} if the real convex set is a polytope whose vertices lie in the dyadic space.
Dyadic convex sets are described as subreducts (subalgebras of reducts) of certain faithful affine spaces over the ring of dyadic numbers, or equivalently as commutative, entropic and idempotent groupoids under the binary operation of arithmetic mean.

The paper contains two main results. First, it is proved that, while all dyadic polytopes are finitely generated, only dyadic simplices are generated by their vertices. This answers a question formulated in an earlier paper. Then, a characterization of finitely generated subgroupoids of dyadic convex sets is provided, and it is shown how to use the characterization to determine the minimal number of generators of certain convex subsets of the dyadic plane.

\end{abstract}

\maketitle


\section{Introduction}\label{S:I}

The objects we study belong to both algebra and geometry. As algebras, they have one binary operation which is idempotent, commutative and entropic. As geometric objects, they are subsets of certain affine spaces.

Recall that \emph{dyadic rationals} are rationals whose denominator is a power of $2$. They form the principal ideal subdomain $\mbD = \mbZ[1/2]$ of the ring $\mbR$ of real numbers. The affine spaces of interest are certain faithful affine spaces over the ring $\mbD$ (or affine $\mbD$-spaces), i.e.,  idempotent reducts of certain faithful $\mbD$-modules. Affine spaces over the ring $\mbR$ (\emph{affine $\mbR$-spaces}) may be described as abstract algebras with uncountably many binary affine combinations indexed by the elements of $\mbR$. Affine $\mbD$-spaces are subreducts (subalgebras of reducts) of affine $\mbR$-spaces. The fact that $\mbD$ is a linearly ordered commutative ring with a nontrivial open unit interval makes it possible to define convex subsets of affine $\mbD$-spaces in a reasonable way. For a given dimension $n$, a subset $D$ of $\mbD^n$ is called a \emph{geometric dyadic convex set} (or briefly a \emph{dyadic convex set}) if it is the intersection of a convex subset of $\mbR^n$ with the space $\mbD^n \subseteq \mbR^n$. Such sets are also described as being \emph{convex relative to $\mbD$} (cf. for instance~\cite{B05}.) Then \emph{dyadic polytopes} are defined as dyadic convex sets obtained as the intersection of dyadic spaces with real polytopes whose vertices are contained in the dyadic space. In particular, \emph{dyadic polygons} are the intersections of the dyadic plane with real polygons spanned by vertices in the dyadic plane. The one-dimensional analogs are \emph{dyadic intervals}.
In this paper the name of a dyadic interval will be always used to denote an interval which is a dyadic polytope.

Real convex sets are described algebraically as certain barycentric algebras, subsets of $\mbR^n$ closed under operations of weighted means with weights from the open unit interval $I^{\circ} = \, ]0,1[$ of $\mbR$. Similarly, dyadic convex sets may be described as subsets of $\mbD^n$  closed under weighted means with weights from the open dyadic unit interval $I^{\circ} \cap \mbD$. However, as shown already by Je\v{z}ek and Kepka~\cite{JK76}, the operations determined by $I^{\circ} \cap \mbD$ are all generated by the single arithmetic mean operation
\[
x \circ y := xy \underline{1/2} = \frac{1}{2}(x + y).
\]
The arithmetic mean is an idempotent, commutative and entropic operation.
This fact allows one to define dyadic convex sets equivalently as grou\-po\-ids with the algebraic structure of a commutative binary mode (or briefly $\mc{CB}$-modes), i.e. algebras with one binary idempotent, commutative and entropic operation. Such algebras have a well developed theory. (See e.g.~\cite{JK83, MMR19, MMR19a, MR04, MR05, MRS11, RS85, RS02}.)

Affine spaces and their subreducts, as well as $\mc{CB}$-modes, belong to the class of \emph{modes}, idempotent and entropic algebras. (For the general theory of modes, see the monographs~\cite{RS85, RS02}.)

Dyadic and real convex sets share many properties. However, there are also essential differences. For example, real convex sets are characterized as cancellative barycentric algebras (see~\cite[Thm.~5.8.6]{RS02}, but not all cancellative $\mc{CB}$-modes are dyadic convex sets (see \cite[\S~5]{MR04},~\cite[\S~1.2]{CR13}.) While real intervals belong to only finitely many isomorphism classes of barycentric algebras, there are infinitely many pairwise non-isomorphic dyadic intervals.
As a subgroupoid of $(\mbD,\circ)$, each nontrivial interval of $\mbD$ is isomorphic to some dyadic interval $\mbD_k = [0,k]$, where $k$ is an odd positive integer. Two such intervals are isomorphic precisely when their right hand ends are equal~\cite[\S~3]{MRS11}.

The problem of locating generating elements of dyadic polygons was first considered in~\cite{MRS11}.
While real intervals which are polytopes are minimally generated by their two ends, the dyadic intervals are minimally generated by $2$ or $3$ elements.
The interval $\mbD_{1}$ is generated by $0$ and $1$, and for $k > 1$, the interval $\mbD_{k}$ is generated by three, but no fewer, elements. For each $d \in \mbD$, the dyadic intervals $[d,d+2^{n}]$ are isomorphic to $\mbD_1$, and the dyadic intervals $[d,d+k2^{n}]$ are isomorphic to $\mbD_k$. More generally, it was shown that each dyadic triangle is finitely generated, and then that each dyadic polygon is finitely generated. However, the methods used in that paper were not sufficient to determine the exact minimal number of generators. Moreover, the following question was left open: Are all dyadic polytopes finitely generated? (See~\cite[Problem~1.1]{MRS11}.)
The question arose in connection with our investigations of quasivarieties of $\mc{CB}$-modes~\cite{MR04, MR05} and of duality for dyadic polytopes ~\cite{MMR19, MMR19a}. A related question asks whether finitely generated subgroupoids of $(\mb{D}^n, \circ)$ are isomorphic to dyadic polytopes. A partial answer was then given in~\cite{MMR19}: It was shown that each finitely generated subgroupoid of $(\mb{D}, \circ)$ is isomorphic to a dyadic interval.

In this paper we provide an affirmative solution to the problem formulated in~\cite{MRS11}, and show that each dyadic polytope is finitely generated (Theorem~\ref{P:fingen}). However, there are some other subsets of $\mbD^n$, closed under the operation $\circ$, which are not isomorphic to dyadic polytopes but are finitely generated
(Example~\ref{Ex:notdpol}). To characterize finitely generated subgroupoids of $\mbD^n$, we introduce a new class of so-called semipolytopes. A \emph{semipolytope} is a subgroupoid of
a dyadic polytope such that both have the same vertices and the same interior (Definition~\ref{D:qpol}).
The main result, Theorem~\ref{T:main}, states that a subgroupoid $(D, \circ)$ of the groupoid $(\mbD^n, \circ)$ is finitely generated precisely when it is isomorphic to a semipolytope.
This result is then used in the final section to calculate the minimal number of generators of certain polygons and semipolygons.

All dyadic convex sets are considered in this paper as $\mathcal{CB}$-modes. We use notation, terminology and conventions similar to those of~\cite{RS02} and the previously referenced papers. In particular, the reader may wish to consult the papers~\cite{CR13},~\cite{MMR19} and~\cite{MRS11}. For more details and information on affine spaces, convex sets and barycentric algebras, we also refer the reader to the monographs~\cite{RS85, RS02} and the recent survey~\cite{R18}. For convex polytopes, see the first two chapters of~\cite{AB83}.

\section{Subsets of dyadic affine spaces}\label{S:2}

In this section we recall and augment the basic facts concerning dyadic affine spaces and dyadic convex sets provided in~\cite{CR13},~\cite{MR04} and~\cite{MRS11}.

\subsection{Dyadic affine spaces}\label{S:2.1}

First recall that affine spaces over a commutative ring $R$ (\emph{affine $R$-spaces}), where $R$ is $\mbR$ or $\mbD$, can be considered as the reducts $(A,\underline{R})$ of $R$-modules $(A,+,R)$, where $\underline{R}$ is the set of binary affine combinations
\begin{equation}\label{E:afoper}
ab\,\underline{r} = a(1-r)+br \,
\end{equation}
for all $r \in R$ and $a,b \in A$. (See~\cite[\S~5.3, \S~6.3]{RS02}.) In particular,
affine spaces over the ring $\mbD$ (affine $\mbD$-spaces) are considered here as algebras $(A,\underline{\mbD})$ with the set $\underline{\mbD} = \{\underline{d} \mid d \in \mbD\}$ of basic operations. The class of all affine $R$-spaces forms a variety~\cite{C75}.

In this paper, we are especially interested in subreducts of certain faithful affine $\mbD$-spaces. (Recall that in a faithful affine $\mbD$-space, the two operations $\underline{d}$ and $\underline{e}$ for $d\ne e\in \mbD$ are different.) By results of~\cite{MR04} and~\cite{CR13}, it follows that one of the minimal quasivarieties of affine $\mbD$-spaces is the quasivariety $\mathsf{Q_A}(\mbD)$ of affine $\mbD$-spaces, generated by the affine $\mbD$-space $\mbD$.
Finitely generated members of $\mathsf{Q_A}(\mbD)$ are free affine $\mbD$-spaces on finitely many generators. The free affine $\mbD$-space on $n+1$\\ generators, for
$n = 0, 1, \dots$, is $\mbD^{n}$. The number $n$ is called the \emph{dimension} of the
affine $\mbD$-space $\mbD^{n}$. The set of elements of the free affine $\mbD$-space $(x_0, \dots, x_n)\mbD$ on $n+1$ generators $x_0, \dots, x_n$ is given as
\[
(x_0, \dots, x_n)\mbD = \Big\{x_0 r_0 + \dots + x_n r_n\, \Big|\, r_i \in \mbD, \sum_{i=0}^n r_i = 1\Big\}.
\]
(See~\cite{PRS03}.)
All members of $\mathsf{Q_A}(\mbD)$ are faithful. Note, however, that there are faithful affine $\mbD$-spaces not contained in $\mathsf{Q_A}(\mbD)$, as for example the affine space $\mbD \times \mbD / (3) \cong \mbD \times \mb{Z}_{3}$.
\footnote{In this context, the explanation given in~\cite{CR13, MR05, MMR19} may require further clarification.}

Unlike the case of the affine $\mbR$-space $\mbR^n$, the affine $\mbD$-space $\mbD^n$ contains infinitely many $n$-dimensional affine $\mbD$-subspaces. First note that nontrivial subspaces of the affine $\mbD$-space $\mbD$ have the form
\[
m \mbD = \{md \mid d \in \mbD\}
\]
for some positive integer $m$. Note also that $2^k \mbD = \mbD$ and $2^k (2l + 1) \mbD = (2l + 1) \mbD$.

Nontrivial affine subspaces of affine $\mbD$-spaces $\mbD^n$ for $n \geq 2$ are more difficult to describe. First note that
\[
m_1 \mbD \times \dots \times m_n \mbD,
\]
where $m_1, \dots, m_n$ are positive integers, are $n$-dimensional subspaces of the affine $\mbD$-spaces $\mbD^n$. Then, for $1 \leq k \leq n$, the products
\[
m_{i_1} \mbD \times \dots \times m_{i_k} \mbD,
\]
are $k$-dimensional subspaces of $\mbD^n$, and hence are isomorphic to the affine $\mbD$-space $\mbD^k$.
However, there exist other affine subspaces of $\mbD^n$. Examples are given by proper subspaces of dyadic lines in $\mbD^n$. For example, the set $\{(3d,3d) \mid d \in \mbD\}$ is a subspace of the dyadic line $\{(d,d) \mid d \in \mbD\}$, a one-dimensional subspace of $\mbD^2$.

Let $X$ be a subset of a finite-dimensional affine $\mbD$-space $A$ from the quasivariety $\mathsf{Q_A}(\mbD)$.
Then the \emph{affine $\mbD$-hull} $\mr{aff}_{\mbD}(X)$ of $X$ is
the intersection of all the affine subspaces of $A$ that contain $X$. The \emph{affine $\mbR$-hull} $\mr{aff}_{\mbR}(X)$ is the intersection of the all affine $\mbR$-spaces containing~$X$.

\subsection{Dyadic convex sets}\label{S:2.2}

\begin{definition}\cite{CR13}
A subgroupoid $(B, \circ)$ of the reduct $(A, \circ)$ of an affine $\mbD$-space $A$ from $\mathsf{Q_A}(\mbD)$ is called an \emph{algebraic dyadic convex subset of} $A$ (or briefly an \emph{algebraic dyadic convex set}).
\end{definition}

Isomorphic copies of algebraic dyadic convex sets form a (minimal) subquasivariety $\mathsf{Q}(\mbD)$ of the variety $\mc{CBM}$ of $\mc{CB}$-modes \cite[\S~3]{MR04}.

\begin{definition}\cite{CR13, MRS11}
An algebraic dyadic convex subset of $\mbD^n$ is \emph{geometric}
if it is the intersection of a convex subset $C$ of $\mbR^n$  with the subspace~$\mbD^n$.
\end{definition}

Not all algebraic dyadic convex sets are geometric. For example, $3 \mbD$ is an algebraic but not a geometric dyadic convex set.
If not otherwise indicated, the term ``dyadic convex set'' will normally be used to denote geometric dyadic convex sets. It is often convenient to use the subspace topology on a dyadic subspace of $\mbR^n$ coming from the Euclidean topology on $\mbR^n$. We use this topology when speaking about closed or open dyadic convex sets.

\begin{definition}
If $C \in \mathsf{Q}(\mbD)$, and $\mr{aff}_{\mbD}(C)$
is of finite dimension $n$, then we say that $C$ is
\emph{finite-dimensional}, and that its \emph{dimension} $\dim(C)$ equals $n$.
\end{definition}

If $C$ is an algebraic dyadic convex subset of an affine $\mbD$-space $\mbD^n$, then the \emph{convex $\mbR$-hull} $\mr{conv}_{\mbR}(C)$ of $C$ in $\mbR^n$ is the intersection of all convex subsets of $\mbR^n$ containing $C$. Then $\mr{conv}_{\mbR}(C)$ may be considered as the subalgebra generated by $C$ in the real convex set $(\mbR^n, \underline{I}^\circ)$ , where $\underline{I}^\circ$ is the set of operations $\underline{r}$ defined by~\eqref{E:afoper} for $r \in I^{\circ}$. The \emph{convex $\mbD$-hull} ~$\mr{conv}_{\mbD}(C)$ of $C$ in $\mbD^n$ is the intersection of $\mr{conv}_{\mbR}(C)$ with $\mbD^n$,
\begin{equation}\label{E:convhull}
\mr{conv}_{\mbD}(C) = \mr{conv}_{\mbR}(C) \cap \mbD^n,
\end{equation}
and is obviously geometric. An algebraic $n$-dimensional convex subset $C$  of an affine space
$\mbD^n$, where $n$ is a positive integer, is a geometric dyadic convex subset of $\mbD^n$ precisely if
\begin{equation}\label{E:geom}
C = \mr{conv}_{\mbD}(C),
\end{equation}
or in other words, if $C$ consists of all dyadic points of the real convex set generated by $C$. The convex $\mbR$-hull of a subset $X$ of $\mbD^n$ is defined similarly as for (algebraic) dyadic convex sets.

The \emph{relative interior} $\mr{int}(C)$ of an algebraic convex subset $C$ of $\mbD^n$ (or briefly the \emph{interior} of $C$) is the intersection with $\mr{aff}_{\mbD}(C)$ of the relative interior of $\mr{conv}_{\mbR}(C)$ in the affine $\mbR$-hull of $C$. The elements of $\mr{int}(C)$ are called \emph{interior points} of $C$. Note that if $C$ is geometric and $n$-dimensional, then $\mr{aff}_{\mbD}(C) = \mbD^n$.

\begin{definition}\cite{MRS11}
A \emph{dyadic $n$-dimensional polytope} is the intersection with the dyadic space $\mbD^n$ of an $n$-dimensional real polytope whose vertices lie in the dyadic space.
\end{definition}

Dyadic polytopes are obviously geometric.
If $P$ is a dyadic $n$-dimen\-sional polytope, then there is a real $n$-dimensional polytope $C$ with dyadic vertices such that
\[
P = C \cap \mbD^n.
\]
Hence
\[
C = \mr{conv}_{\mbR}(P),
\]
and then,
\[
P = C \cap \mbD^n = \mr{conv}_{\mbR}(P) \cap \mbD^n.
\]
On the other hand, by \eqref{E:convhull} and \eqref{E:geom},
\begin{equation}\label{E:pol}
P = \mr{conv}_{\mbR}(P) \cap \mbD^n = \mr{conv}_{\mbD}(P).
\end{equation}
The vertices of the (real) polytope $\mr{conv}_{\mbR}(P)$ are contained in $P$, and will be referred to as to the vertices of $P$.

\subsection{Walls}\label{S:2.3}

For a barycentric algebra $(C,\underline{I}^o)$, which is a convex set, a subset $A$ of $C$ is a \emph{wall}, if
\[
\forall\ a,b\in C\,,\  \forall\,\ r \in I^o\,,\
a b\underline{r} \in A
\
\Leftrightarrow
\
a\in A
\mbox{ and }
b\in A.
\]
Similarly, for a dyadic convex set $(C,\circ)$, a subset $A$ of $C$ is a \emph{wall}, if
\[
\forall\ a,b\in C\,,\
a\circ b\in A
\
\Leftrightarrow
\
a\in A
\mbox{ and }
b\in A.
\]
(See e.g.~\cite[\S 3.3]{RS85} and~\cite[Def.~1.1.9]{RS02}.)
Note that in the case of real polytopes, the geometric concept of a face (see e.g. \cite[\S5, \S7]{AB83}) and the algebraic concept of a wall describe the same subsets of a polytope. The $0$-dimensional faces of a real $n$-dimensional polytope $P$ are its vertices, and the $1$-dimensional faces are the edges. The maximal (thus $(n-1)$-dimensional) faces are also called the \emph{facets} of $P$. The proper faces of a polytope are polytopes of smaller dimension, and each face of $P$ considered as a barycentric algebra is generated by some vertices of $P$. The \emph{boundary} of $P$ in the affine hull of $P$ is the difference between $P$ and its interior, and is the union of the proper faces of $P$.

Using the algebraic definition of a wall, one can easily extend the geometric concept of a face of a real polytope to the case of a dyadic polytope.
This extension preserves the basic properties of faces of real polytopes.
In particular, walls of a dyadic polytope may be described as follows.

Let $P$ be an $n$-dimensional dyadic polytope and let $C = \mr{conv}_{\mbR}(P)$.
First note that by \eqref{E:pol},
\begin{equation}
P = C \cap \mbD^n,
\end{equation}
i.e. $P$ consists of all dyadic points of the real polytope $C$. Moreover, the dyadic polytope $P$ and the real polytope $C$ have the same (dyadic) vertices.

\begin{proposition}\label{P:drwalls}
Let $P$ be an $n$-dimensional dyadic polytope and let $C = \mr{conv}_{\mbR}(P)$. Then the intersection of $P$ with a wall of $C$ is a wall of $P$.
\end{proposition}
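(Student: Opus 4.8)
The plan is to unwind the two notions of wall introduced in Section~\ref{S:2.3} and to use the fact that the arithmetic mean $\circ$ is, literally, one of the barycentric operations of the real convex set $C = \mr{conv}_{\mbR}(P)$, namely the operation $\underline{1/2}$ with weight $1/2 \in I^{o}$.

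Concretely, I would fix a wall $W$ of the barycentric algebra $(C,\underline{I}^{o})$ and set $A = W \cap P$. By \eqref{E:pol} we have $P = C \cap \mbD^n$, and $P$ is a subgroupoid of $(C,\circ)$, so $a \circ b \in P$ whenever $a,b \in P$. Using this, the defining condition for $A$ to be a wall of the $\mc{CB}$-mode $(P,\circ)$,
\[
\forall\, a,b \in P, \quad a \circ b \in A \ \Leftrightarrow\ a \in A \mbox{ and } b \in A,
\]
simplifies: for $a,b \in P$ we have $a\circ b \in A = W\cap P$ iff $a \circ b \in W$, while $a \in A$ iff $a \in W$ and $b \in A$ iff $b \in W$. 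Hence it remains only to prove that, for all $a,b \in P$,
\[
a\circ b \in W \ \Leftrightarrow\ a \in W \mbox{ and } b \in W.
\]

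For this last equivalence, I would note that $a \circ b = ab\,\underline{1/2}$ and that $a,b \in P \subseteq C$. Applying the defining property of the wall $W$ of $(C,\underline{I}^{o})$ with the weight $r = 1/2 \in I^{o}$ to the pair $a,b \in C$ gives exactly $ab\,\underline{1/2} \in W \Leftrightarrow a \in W \mbox{ and } b \in W$, which is what is needed. Therefore $A = W\cap P$ is a wall of $P$.

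I do not expect a genuine obstacle here: the argument rests entirely on the single observation that $\circ$ coincides with the barycentric operation $\underline{1/2}$ of $C$, together with the facts recorded just before the statement that $P \subseteq C$ and that $P$ is closed under $\circ$. The only point requiring a little care is the bookkeeping showing that passing to the subset $P$ does not affect membership in the wall, i.e. that $a\circ b$ automatically stays inside $P$, so that $a\circ b \in W\cap P \Leftrightarrow a\circ b \in W$.
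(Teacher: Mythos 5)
Your proof is correct and follows essentially the same route as the paper's own (much terser) argument: both reduce the wall condition for $W\cap P$ in $(P,\circ)$ to the wall condition for $W$ in $(C,\underline{I}^{o})$ instantiated at the weight $r=1/2$, using that $P$ is closed under $\circ$. Your version simply makes explicit the bookkeeping that the paper leaves implicit.
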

\begin{proof}
Let $W$ be a wall of $C$. To show that $W \cap P$ is a wall of $P$, let $d_1, d_2$ be any elements of $P$. Then $d = d_1 \circ d_2 \in W \cap P$, precisely when $d_1, d_2 \in W \cap P$. Thus $W \cap P$ is a wall of $P$.
\end{proof}

We will show that the only walls of a dyadic polytope $P$ are those described in Proposition~\ref{P:drwalls}.

\begin{lemma}\label{L:3walls}
If $P$ is a one-dimensional dyadic polytope, then $P$ is a dyadic interval and has precisely three non-empty walls, two zero-dimensional walls and one improper one-dimensional wall $P$.
\end{lemma}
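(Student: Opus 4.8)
The plan is to show that a one-dimensional dyadic polytope $P$ is a dyadic interval, and then to enumerate its walls directly. For the first part, $P$ is by definition the intersection with $\mbD$ of a one-dimensional real polytope $C$ with dyadic vertices; a one-dimensional real polytope is a closed bounded segment $[a,b]$ with $a < b$, and having dyadic vertices means $a, b \in \mbD$. Hence $P = [a,b] \cap \mbD$, which is exactly a dyadic interval in the sense fixed in Section~\ref{S:I}. I would then recall from the introduction (following~\cite{MRS11}) that, up to isomorphism of $\mc{CB}$-modes, such an interval is one of the $\mbD_k = [0,k]$ for $k$ an odd positive integer; but for the wall count this normalization is not even needed, since walls are preserved by isomorphism and the argument below works for $[a,b]\cap\mbD$ directly.

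For the walls, let $W$ be a non-empty wall of $P$. First note that the two vertices $\{a\}$ and $\{b\}$ are walls: if $a = d_1 \circ d_2$ with $d_1, d_2 \in P$, then since $a$ is the minimum of $P$ we must have $d_1 = d_2 = a$, and dually for $b$; conversely $a \circ a = a$. And $P$ itself is trivially a wall. So there are at least three non-empty walls: $\{a\}$, $\{b\}$, and $P$. It remains to show there are no others. Observe that by Proposition~\ref{P:drwalls} every wall of $C = [a,b]$ intersected with $P$ gives a wall of $P$; since the walls of the real segment $C$ are exactly $\{a\}$, $\{b\}$ and $C$, this recovers the same three. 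The content of the lemma is that these exhaust all walls, which is the step I expect to be the main point.

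To prove exhaustion, suppose $W$ is a non-empty wall of $P$ that is not $\{a\}$, not $\{b\}$, and not $P$. Since $W \neq P$, pick $c \in P \setminus W$; since $W$ is non-empty, pick $w \in W$. The key use of the wall axiom is its ``only if'' direction applied in reverse: I claim any point of $P$ strictly between two elements of $W$ lies in $W$, and more importantly, if $w \in W$ and $c \in P \setminus W$, then the midpoint $w \circ c \notin W$ (because $w \circ c \in W$ would force $c \in W$). Iterating, all the dyadic points $w \circ c$, $w \circ (w \circ c)$, $\dots$ accumulating at $w$ lie outside $W$. Combined with the fact that if both endpoints of a sub-dyadic-interval of $P$ are in $W$ then the whole sub-interval is in $W$ (again by the wall axiom, halving repeatedly), one shows $W$ cannot contain two distinct points unless $W = P$: indeed if $u < v$ are both in $W$, the wall axiom applied to the midpoint and then recursively forces every dyadic point of $[u,v]$ into $W$, and then one pushes past $u$ toward $a$ and past $v$ toward $b$ using that $a,b$ are limits of such midpoints, forcing $W = P$, a contradiction. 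Hence $|W| = 1$, say $W = \{w\}$ with $a < w < b$; but then writing $w = u \circ v$ for suitable distinct dyadic $u, v \in P$ with $u \neq w \neq v$ (possible since $w$ is interior), the wall axiom forces $u, v \in W = \{w\}$, a contradiction. Therefore $W \in \{\{a\}, \{b\}, P\}$, which completes the proof.

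The main obstacle is the last exhaustion step: one has to use the wall axiom in both directions and exploit that $\mbD$ is dense and order-complete enough (in the relevant sense) that the endpoints $a, b$ are limits of internal midpoints, so that a wall containing any interior point is forced to be all of $P$.
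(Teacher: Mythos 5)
Your proof is correct and follows essentially the same route as the paper: the decisive mechanism in both is the reflection trick — if $d$ lies in a wall $W$ and both $x$ and $2d-x$ lie in $P$, then $x\circ(2d-x)=d$ forces $x\in W$ — iterated with a doubling argument until the whole interval is absorbed. Just be sure the outward ``push past $u$ toward $a$'' is stated as finitely many doublings obtained by reflecting in the current endpoints of the absorbed subinterval (not as a limit), and that ``halving repeatedly'' is not read as subgroupoid generation, which would give only the subgroupoid generated by $u,v$ rather than all of $[u,v]\cap\mbD$.
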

\begin{proof}
A one-dimensional dyadic polytope $P$ is a dyadic interval, as described in Section~\ref{S:I}. Up to isomorphism, $P$ is an interval $\mbD_k$ of $\mbD \subseteq \mbR$, for some odd positive integer $k$. Then $C = \mr{conv}_{\mbR}(\mbD_k)$ is the real interval with vertices $0$ and $k$. Each of the vertices $0$ and $k$ forms a $0$-dimensional wall of $\mbD_k$ and of $C$. We will show that $\mbD_k$ has only one more wall which coincides with $\mbD_k$.

Let $d$ be any dyadic element of $C$ different from $0$ and $k$.
We are looking for the smallest wall $W$ of $\mbD_k$ containing $d$. First note that for each $x$ in $\mbD$, the reflection $x_d = 2d - x$ of $x$ in $d$ is also a dyadic number and $x \circ x_d = d$. If both $x$ and $x_d$ belong to $\mbD_k$, then $x$ and $x_d$ belong to $W$. In particular, if $d = k/2$, then each $x \in \mbD_k$ belongs to $W$, and $W$ coincides with $\mbD_k$.

Let $d$ be different from $0, k/2$ and $k$. We will show that $k/2 \in W$, which implies that $W$ coincides with $\mbD_k$. Without loss of generality it suffices to consider the case where $0 < d < k/2$. First let $k/4 \leq d < k/2$. Then $k/2 \leq 2d$, and for each $0 \leq x \leq d$, we have $d = x \circ x_d$. Hence $x, x_d \in W$. It follows that the dyadic interval $[0, 2d]$ is contained in $W$. In particular $k/2 \in W$.
Now let $0 < d < k/4$. Then there is a positive integer $i$ such that $2^i d < k/2 < 2^{i+1} d < k$. Similarly as before one shows that the dyadic intervals $[0,2d]$, and then $[0,4d], \dots, [0, 2^{i+1} d]$ are contained in $W$. Obviously, the last interval $[0, 2^{i+1} d]$ contains $k/2$.
\end{proof}

If $C$ is the smallest real polytope containing a dyadic polytope $D$, then we say that $D$ generates $C$. In other words, $C$ is generated by $D$ as a barycentric algebra.

\begin{proposition}\label{P:rdwalls}
Let $P$ be an $n$-dimensional dyadic polytope and let $C = \mr{conv}_{\mbR}(P)$. Then for each $k$-dimensional wall $W$ of $P$, where $k \leq n$, the real polytope $\overline{W} := \mr{conv}_{\mbR}(W)$ is a wall of $C$ generated by $W$, and $W = \overline{W} \cap P$.
\end{proposition}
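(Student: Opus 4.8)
I would argue by induction on $n=\dim P$. The base cases $n=0,1$ are immediate: for $n=0$, $P=C$ is a point with $\oW=P=C$; for $n=1$, Lemma~\ref{L:3walls} lists the nonempty walls of the dyadic interval $P$ as its two vertices $\{v_{0}\},\{v_{1}\}$ and $P$ itself, and for these $\oW$ is respectively $\{v_{0}\}$, $\{v_{1}\}$, or $C$, each of which is a wall of $C$, with $W=\oW\cap P$.

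The engine of the inductive step is the following auxiliary claim: \emph{if $W$ is a wall of a dyadic polytope $Q$ and $W$ contains an interior point $x$ of $Q$, then $W=Q$.} To prove it, fix $p\in Q$ with $p\neq x$ and let $\ell$ be the real line through $p$ and $x$. Since $p$ and the direction $x-p$ are dyadic, $\ell\cap\mbD^{n}$ is a one-dimensional dyadic affine subspace, dense in $\ell$. Because $x$ lies in the relative interior of $\mr{conv}_{\mbR}(Q)$, the open portion of $\ell\cap\mr{conv}_{\mbR}(Q)$ lying strictly beyond $x$ is nonempty, hence contains a dyadic point $p^{+}$; as $Q$ is geometric, $p^{+}\in Q$, and $x$ is strictly between $p$ and $p^{+}$. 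Then $J:=\mr{conv}_{\mbR}\{p,p^{+}\}\cap\mbD^{n}$ is a one-dimensional dyadic polytope contained in $Q$ with $x\in\mr{int}(J)$. A wall of $Q$ restricts to a wall of any subgroupoid of $Q$, so $W\cap J$ is a wall of $J$; it contains the non-vertex point $x$, hence is neither empty nor a vertex, so by Lemma~\ref{L:3walls} $W\cap J=J$. Thus $p\in J\subseteq W$, and since $p$ was arbitrary, $W=Q$.

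Now take $n=\dim P\geq 2$, let $W$ be a nonempty wall of $P$, and set $C=\mr{conv}_{\mbR}(P)$. If $W\cap\mr{int}(P)\neq\emptyset$, the auxiliary claim gives $W=P$, so $\oW=C$ is the improper wall of $C$ and $W=P=\oW\cap P$. Otherwise $W\cap\mr{int}(P)=\emptyset$, hence $W\neq P$, and I claim $W\subseteq F\cap P$ for some facet $F$ of $C$. If not, then for each facet $F_{j}$ of $C$, cut out on $C$ by a linear inequality $\varphi_{j}\leq c_{j}$ with equality exactly on $F_{j}$, we may pick $w_{j}\in W\setminus F_{j}$, so $\varphi_{j}(w_{j})<c_{j}$; the iterated mean $b=(\cdots(w_{1}\circ w_{2})\circ\cdots)\circ w_{m}$ belongs to $W$ and is a convex combination of the $w_{j}$ with strictly positive dyadic weights, so $\varphi_{j}(b)<c_{j}$ for every $j$, whence $b$ lies on no facet of $C$, i.e. $b\in\mr{int}(C)\cap\mbD^{n}=\mr{int}(P)$ — contradicting $W\cap\mr{int}(P)=\emptyset$. (Applying the same averaging to $n+1$ affinely independent points of $W$ shows that $\dim W=n$ already forces $W$ to meet $\mr{int}(P)$, so in the present case $\dim W\leq n-1$.) Now $P':=F\cap P=F\cap\mbD^{n}$ is a dyadic polytope of dimension $n-1$ (a facet of $C$ has dyadic vertices) with $\mr{conv}_{\mbR}(P')=F$, and $W$, being a wall of $P$ contained in the subgroupoid $P'$, is a wall of $P'$. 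By the induction hypothesis, $\oW=\mr{conv}_{\mbR}(W)$ is a wall of $F$ — in particular a real polytope, being a face of the polytope $F$ — generated by $W$, with $W=\oW\cap P'$. Since $F$ is a wall of $C$ and a wall of a wall is a wall, $\oW$ is a wall of $C$; and since $\oW\subseteq F$, we get $\oW\cap P=\oW\cap(F\cap P)=\oW\cap P'=W$, completing the induction.

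I expect the auxiliary claim to be the real obstacle. The point is that being a wall is an algebraic condition (closure under preimages of $\circ$) that must be converted into geometric fullness, and one cannot in general realize an interior point as an iterated mean of the chosen point $p$ with other points of $P$, because dyadic polytopes are not generated by their vertices in general. The way around this is to cut $P$ by a one-dimensional dyadic section through the interior point, where the walls are completely known from Lemma~\ref{L:3walls}. The remaining facts used — that walls restrict to subgroupoids, compose transitively, and coincide with faces for real polytopes; that a facet of $C$ has dyadic vertices and meets $\mbD^{n}$ in a dyadic polytope generating it; and the density of $\mbD^{n}$ along a dyadic line — are routine.
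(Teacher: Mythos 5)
Your proof is correct, but it is organized quite differently from the paper's. The paper fixes $P$ and inducts on the dimension $k$ of the wall $W$: it introduces the wall $F$ of $C$ generated by $W$, takes an interior point $d$ of $F$ lying in $W$, and uses reflections $x\mapsto 2d-x$ together with the line-stretching argument of Lemma~\ref{L:3walls} to show that $W$ absorbs every dyadic point of $F\cap P$, whence $W=F\cap P$ and $F=\oW$. You instead induct on $n=\dim P$ and split into two cases via your auxiliary claim (a wall meeting $\mr{int}(P)$ must be all of $P$) --- itself proved by restricting the wall to a one-dimensional dyadic section through the interior point, where Lemma~\ref{L:3walls} pins down the walls completely --- and, in the complementary case, your facet-averaging argument (the iterated mean of points $w_j\in W\setminus F_j$, one per facet, would land in $W\cap\mr{int}(P)$) to force $W$ into a single facet, to which the induction hypothesis applies. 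The two proofs share the same geometric engine, namely cutting by dyadic lines through an interior point and invoking the one-dimensional classification of walls, but your dichotomy plus descent-to-a-facet is cleaner and more self-contained than the paper's reflection argument, and the averaging trick makes explicit a step the paper leaves implicit (why a proper wall must lie in the boundary). One bookkeeping point: in the inductive step the facet $P'=F\cap\mbD^n$ is an $(n-1)$-dimensional polytope sitting inside $\mbD^n$ rather than $\mbD^{n-1}$, so the induction hypothesis should be read as ranging over $k$-dimensional dyadic polytopes realized as intersections of $\mbD^n$ with $k$-dimensional real polytopes having dyadic vertices; this matches the convention the paper itself uses (cf.\ Corollary~\ref{C:wallsdp} and the treatment of edges as dyadic intervals), so it is a matter of phrasing rather than a gap.
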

\begin{proof}
Let $W$ be a wall of $P$, $\oW = \mr{conv}_{\mbR}(W)$, and $F$ be the wall of $C$ generated by the wall $W$. Note that
\[
W \subseteq \oW = \mr{conv}_{\mbR}(W)\subseteq F \subseteq C = \mr{conv}_{\mbR}(P).
\]
The proof is by induction on the dimension of a wall of $P$.
If $\dim(W) = 0$, then $W$ consists of one element, which is a vertex of both $F$ and $P$. If $\dim(W) = 1$, then $W \subseteq \overline{W} \subseteq F$, and all three $W, \oW$ and $F$ are one-dimensional. Moreover $F$ is a real interval with dyadic ends, which are vertices of $P$ and $C$, containing the dyadic interval $F \cap P$ of all dyadic points of $F$. By Proposition~\ref{P:drwalls}, $F \cap P$ is a wall of $P$.
By Lemma~\ref{L:3walls}, $F \cap P$ has two $0$-dimensional walls and one improper wall $F \cap P$. In particular each point of $W$, different from a vertex, generates $F \cap P$ as a wall of $P$. It follows that $W = F \cap P = F \cap \mbD$, and $F = \oW$.

Now assume that the statement of Proposition holds for all walls of dimension smaller than
$k < n$. We will show that it holds for $k$. So let $W$ be a wall of $P$ of dimension $k$.  Define $\oW$ and $F$ as before. Note that $F$ is a real polytope with dyadic vertices, and $F \cap P$ is a dyadic polytope with the same vertices as $F$.

Let $d \in F \cap P$. We are looking for a smallest wall of $F \cap P$ containing $d$.

If $d$ generates a wall $U$ of $P$ contained in $F$ of dimension smaller than $k$, then by induction hypotheses, $U = \overline{U} \cap P$, where $\overline{U} = \mr{conv}_{\mbR}(U)$ is a wall of $F$. So assume that $d$ is an interior point of $F$.
Recall that each interior point of $F$ generates the wall $F$. (See~\cite[Thm.~5.6]{AB83}.) Indeed, proper walls of $F$ do not contain interior points of $F$. Moreover, $W, \oW$ and $F$ are $k$-dimensional. Let $x$ be any dyadic point of $F \cap P$ and assume that its reflection $x_d$ in $d$ also belongs to $F \cap P$. Since $W$ is a wall of $P$ and $d = x \circ x_d$, it follows that if $d \in W$, then both $x$ and $x_d$ also belong to $W$. Consider the real line $l_d$ through $d$ and $x$. Then an argument similar to that in the proof of Lemma~\ref{L:3walls} shows that each dyadic interval contained in $l_d \cap P$ is contained in $W$. In particular, if $l_d \cap P$ contains a vertex of $P$, then this vertex belongs to $W$.
It follows that any dyadic point of $F \cap P$ belongs to $W$, and hence $W = F \cap P = F \cap \mbD$, and $F = \oW$.
\end{proof}

\begin{corollary}\label{C:wallsdp}
Each wall of a dyadic polytope is also a dyadic polytope.
\end{corollary}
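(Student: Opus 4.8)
The plan is to read the corollary off from Proposition~\ref{P:rdwalls} together with the standard fact that a face of a real polytope is itself a real polytope. Fix a dyadic $n$-dimensional polytope $P$, put $C = \mathrm{conv}_{\mathbb{R}}(P)$, and let $W$ be a wall of $P$, say of dimension $k$. By Proposition~\ref{P:rdwalls} the set $\overline{W} := \mathrm{conv}_{\mathbb{R}}(W)$ is a wall of $C$ and $W = \overline{W} \cap P$. Since $\overline{W}$ is a wall --- equivalently, a face --- of the real polytope $C$, it is a $k$-dimensional real polytope, and its vertices are among the vertices of $C$. But the vertices of $C = \mathrm{conv}_{\mathbb{R}}(P)$ coincide with the vertices of $P$, which by hypothesis lie in $\mathbb{D}^n$; hence $\overline{W}$ is a $k$-dimensional real polytope all of whose vertices are dyadic. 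Moreover $\overline{W} \subseteq C$, and $P = C \cap \mathbb{D}^n$ by~\eqref{E:pol}, so $W = \overline{W} \cap P = \overline{W} \cap \mathbb{D}^n$; that is, $W$ is precisely the set of dyadic points of $\overline{W}$.

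It remains to put this in the coordinate shape demanded by the definition of a dyadic $k$-dimensional polytope, namely inside the standard dyadic space $\mathbb{D}^{k}$ rather than inside the $k$-dimensional real subspace $\mathrm{aff}_{\mathbb{R}}(W)$ of $\mathbb{R}^n$. Let $A := \mathrm{aff}_{\mathbb{R}}(W) \cap \mathbb{D}^n$. Since $W \subseteq A \subseteq \mathrm{aff}_{\mathbb{R}}(W)$ and $W$ affinely $\mathbb{R}$-spans $\mathrm{aff}_{\mathbb{R}}(W)$, the set $A$ is an affine $\mathbb{D}$-subspace of $\mathbb{D}^n$ of dimension $k$, and $W = \overline{W} \cap \mathbb{D}^n = \overline{W} \cap A$ because $\overline{W} \subseteq \mathrm{aff}_{\mathbb{R}}(W)$. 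Because $\mathbb{D}$ is a principal ideal domain, the difference module of $A$ --- a submodule of the free module $\mathbb{D}^n$ --- is free of rank $k$, so $A$ is a finitely generated member of $\mathsf{Q_A}(\mathbb{D})$ and is therefore isomorphic, as an affine $\mathbb{D}$-space, to $\mathbb{D}^{k}$. Choosing such an isomorphism $\alpha \colon A \to \mathbb{D}^{k}$ and extending it to an $\mathbb{R}$-affine isomorphism $\overline{\alpha} \colon \mathrm{aff}_{\mathbb{R}}(W) \to \mathbb{R}^{k}$ with $\overline{\alpha}(A) = \mathbb{D}^{k}$, one sees that $\overline{\alpha}(\overline{W})$ is a $k$-dimensional real polytope whose vertices $\overline{\alpha}(v) = \alpha(v)$ (each vertex $v$ of $\overline{W}$ lying in $A$) are dyadic, and that $\overline{\alpha}(W) = \overline{\alpha}(\overline{W} \cap A) = \overline{\alpha}(\overline{W}) \cap \mathbb{D}^{k}$. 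Hence $W$ is isomorphic to the dyadic $k$-dimensional polytope $\overline{\alpha}(\overline{W}) \cap \mathbb{D}^{k}$. The degenerate cases are covered too: a $0$-dimensional wall is a single point, i.e.\ a dyadic $0$-polytope, and the improper wall $P$ is $P$ itself.

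The corollary is thus almost entirely a repackaging of Proposition~\ref{P:rdwalls}: the only step carrying genuine content is the final re-coordinatization, whose substance is the structural fact that a full-rank affine $\mathbb{D}$-subspace of $\mathbb{D}^n$ is isomorphic to the free affine $\mathbb{D}$-space $\mathbb{D}^{k}$ --- which in turn rests on $\mathbb{D}$ being a PID, so that submodules of $\mathbb{D}^n$ are free of rank at most $n$. If one is willing to regard any set of the form ``a real $k$-polytope with dyadic vertices intersected with its dyadic points'' as a dyadic polytope (as is implicitly done in the proof of Proposition~\ref{P:rdwalls}), then no re-coordinatization is needed and the corollary is immediate from the first paragraph.
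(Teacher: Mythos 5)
Your first paragraph is exactly the paper's proof: the paper disposes of Corollary~\ref{C:wallsdp} in one line by citing Propositions~\ref{P:drwalls} and~\ref{P:rdwalls} to identify a wall $W$ with the set of dyadic points of the real polytope $\overline{W}$, whose vertices are dyadic vertices of $P$. The re-coordinatization into $\mathbb{D}^k$ in your second paragraph is correct but goes beyond what the paper does here --- the paper implicitly accepts ``the dyadic points of a real $k$-polytope with dyadic vertices sitting inside $\mathbb{D}^n$'' as a dyadic polytope at this stage, and only later (with the notion of an $A$-polytope) addresses the ambient-space issue you resolve via the PID argument.
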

\begin{proof}
By Propositions~\ref{P:drwalls} and~\ref{P:rdwalls}, a wall $W$ of an $n$-dimensional dyadic polytope $P$ is the intersection with $\mbD^n$ of the real polytope, namely the wall $F$ of $C$ generated by $W$.
\end{proof}

Propositions~~\ref{P:drwalls} and~\ref{P:rdwalls} show that there is a one-to-one correspondence between walls of $P$ and walls of $C$, and for each wall $W$ of $P$, one has $W = \overline{W} \cap P$. In particular, $W$ and $\overline{W}$ have the same (dyadic) vertices. It follows that
the walls of $P$ are determined by the vertices of $P$ they contain. Proper walls of $P$ are polytopes of a dimension smaller than the dimension of $P$.
Note as well that the lattices of walls of $P$ and $C = \mr{conv}_{\mbR}(P)$ are isomorphic.
Recall that dyadic subspaces of $\mbR^n$ are considered in this paper as topological spaces with  the subspace topology coming from the Euclidean topology on $\mbR^n$. Then similarly as in the case of real polytopes,
the (relative) interior of a dyadic polytope $P$ consists of the points of $P$ which are not contained in the proper walls of $P$, and the boundary of $P$ (in its affine $\mbD$-hull) is the difference between $P$ and its interior.

Corollary~\ref{C:wallsdp} may be extended to some algebraic dyadic convex sets.
Let $D$ be an $n$-dimensional algebraic dyadic convex set such that $E = \rm{conv}_{\mbD}(D)$ is a (dyadic) polytope. In this case,
each wall $W$ of $D$ is the intersection with $D$ of the wall of $E$ generated by $W$. The proof is similar as in the case of dyadic polytopes, with $\mbD$ replaced by $\rm{aff}_{\mbD}(D)$ and $C$ replaced by $E$.
Note that the vertices of $E$ belong to $D$, and the set of all these vertices is contained in each set of generators of $D$. If $D$ has $k$ vertices, then $k \geq n+1$. Note as well that $D$, $\mr{conv}_{\mbD}(D)$ and $\mr{conv}_{\mbR}(D)$ have the same dimension.

\begin{example}
To illustrate the concepts introduced above, consider the points $A = (0,0), B = (3,0)$ and $C = (0,1)$ of the plane $\mbD^2 \subseteq \mbR^2$. The closed real triangle $T_r$ with vertices $A$, $B$ and $C$ is a real polytope generated (as the barycentric algebra) by these vertices. The dyadic triangle $T_d = T_r \cap \mbD^2$ is a dyadic polytope with vertices $A$, $B$ and $C$, and with $\mr{conv}_{\mbR}(T_d) = T_r$. It consists of all dyadic points contained in $T_r$. Let us note that $A$, $B$ and $C$ are not sufficient to generate $T_d$ (as a groupoid under the operation $\circ$), since $A$ and $B$ generate only the dyadic points $(a,0)$ with $a$ divisible by $3$. However, $T_d$ is generated by $A$, $B$, $C$ and $D = (1,0)$. (See~\cite[Cor.~7.5]{MRS11}.) On the other hand, $A$, $B$ and $C$ generate the (algebraic) dyadic convex subset $T_a$ of $3 \mbD \times \mbD$, which is not geometric.

All the three triangles have the same vertices, which form $0$-dim\-ne\-sion\-al walls.
The real intervals $[A,B], [B,C], [A,C]$ are the proper nontrivial walls of $T_r$. Similarly, the dyadic intervals $[A,B], [B,C], [A,C]$ are the proper nontrivial walls of $T_d$.
The proper nontrivial walls of $T_a$ are obtained as the intersections of the corresponding walls of $T_d$ with $T_a$.
Note as well, that the open real or dyadic triangle $ABC$ has no vertices, is not a polytope and has no nontrivial proper walls. The dyadic open triangle $ABC$ is geometric. On the other hand, the open dyadic triangle $ABC$ together with the dyadic interval $[A,C]$ is also geometric, is not a polytope, but has two vertices $A$ and $C$, and three proper walls $A$, $C$ and $[A,C]$.
\end{example}

\subsection{Simplices}\label{S:2.4}

Examples of dyadic polytopes generated by their vertices are given by some models of finitely generated free commutative binary modes.
First recall that in affine geometry, a (real) $n$-dimensional simplex is usually defined as a polytope with $n+1$ affinely independent vertices. It is known that proper faces of a simplex are simplices of smaller dimension~\cite[Thm.~12.1, Thm.~12.2]{AB83}. It was shown by W. Neumann~\cite{N70} that the free barycentric algebra on $n+1$ generators is the $n$-dimensional simplex with its vertices as free generators. The elements of the free barycentric algebra $X\mc{B}$ on $X$, where $X = \{x_0, x_1, \dots ,x_n\}$, are described as follows
\[
\Big\{x_0 r_0 + \dots + x_n r_n\, \Big|\, r_i \in I, \ \sum_{i=0}^n r_i = 1\Big\}.
\]
Here $I = [0,1]$ is the closed unit interval of $\mbR$. (See~\cite[L.~6.1, L.~6.2]{PRS03}).

If $S$ is an $n$-dimensional real simplex with dyadic vertices $x_0,
\dots, x_n$, then the subgroupoid of $(S,\circ)$ generated by these vertices is called an \emph{$n$-dimensional algebraic dyadic simplex}. Its elements form the set
\begin{equation}\label{E:dyadsimpl}
\Big\{x_0 r_0 + \dots + x_n r_n\, \Big|\, r_i \in \mbD_1 = I \cap \mbD,\ \sum_{i=0}^n r_i = 1\Big\}.
\end{equation}
Note that such a simplex is not necessarily geometric.
As proved in~\cite[L.~6.1, L.~6.2]{PRS03}, the free commutative binary mode $X\mc{CBM}$ on a set $X = \{x_0, \dots ,x_n\}$ is isomorphic with the $\underline{1/2}$-reduct generated by $X$ of the free affine $\mbD$-space $X\mbD$ on $X$. The elements of $X\mc{CBM}$ are described by~\eqref{E:dyadsimpl}.

It was shown earlier in~\cite{JK76} and~\cite[\S4.4]{RS85} that by taking
the elements
\begin{equation}\label{E:freegens}
e_0 = (0, \dots, 0), e_1 = (1, 0, \dots, 0), \dots, e_n = (0, \dots, 0, 1)
\end{equation}
as the generators $x_i$ in~\eqref{E:dyadsimpl}, one obtains the (geometric) dyadic simplex $\mbS_n$ free in the variety $\mc{CBM}$. The simplex $\mbS_n$ is a dyadic polytope, and a subreduct of the real simplex $\mbS_n^{r}$ with the same vertices.
The proper faces of $\mbS_n$ are polytopes of smaller dimension.
While each algebraic dyadic simplex is isomorphic to some $\mbS_n$,
not all of them are geometric. An example is given by the dyadic simplex generated by $e_0 = (0, \dots, 0), e'_1 = (3, 0, \dots, 0), \dots, e'_n = (0, \dots, 0, 3)$.
In fact, any affinely independent set of points of $\mbD^n$ generates an algebraic dyadic simplex. We will use the name \emph{algebraic dyadic simplex} for a groupoid isomorphic to some $\mbSn$, while reserving the name \emph{dyadic simplex} for geometric dyadic simplices.

\begin{lemma}\cite[L.~2.2]{CR13}\label{L:subsimpl}
Each $n$-dimensional algebraic dyadic convex set, where $n = 1, 2, \dots$, contains an $n$-dimensional subgroupoid isomorphic to the dyadic simplex $\mbSn$.
\end{lemma}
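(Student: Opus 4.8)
The plan is to exhibit $n+1$ affinely independent points inside $C$; by what was recalled in \S\ref{S:2.4}, any such family generates an algebraic dyadic simplex, which is isomorphic to $\mbSn$, and this copy will automatically be $n$-dimensional and contained in $C$ (since $C$ is closed under $\circ$).

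First I would fix an ambient affine $\mbD$-space: up to isomorphism $C$ is a subgroupoid of the reduct of an affine $\mbD$-space that embeds into some power $\mbD^{S}\subseteq\mbR^{S}$, so that real affine hulls of subsets of $C$ make sense. Let $p$ be the largest size of an affinely independent subset of $C$. Affine independence of dyadic points is detected by $\mbR$-linear independence of their differences, so every affinely independent subset of $C$ lies in $\mr{aff}_{\mbD}(C)$; since an affine $\mbD$-space of dimension $n$ admits at most $n+1$ affinely independent points, $p\le n+1$, and in particular $p$ is finite. Choose a maximal affinely independent subset $B=\{x_{0},\dots,x_{p-1}\}\subseteq C$. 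By maximality every element of $C$ lies in the real affine hull $V=\mr{aff}_{\mbR}(B)$, which has real dimension $p-1$; hence $C\subseteq V\cap\mbD^{S}$, so $\mr{aff}_{\mbD}(C)\subseteq V\cap\mbD^{S}$, and this affine $\mbD$-space, lying inside the $(p-1)$-dimensional real space $V$, has dimension at most $p-1$. Therefore $n=\dim\mr{aff}_{\mbD}(C)\le p-1\le n$, which forces $p=n+1$.

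Now $B$ is an affinely independent $(n+1)$-element subset of $C$, so the subgroupoid $\langle B\rangle$ of $(C,\circ)$ that it generates is, by \S\ref{S:2.4}, the algebraic dyadic simplex on $x_{0},\dots,x_{n}$, and hence isomorphic to $\mbSn$; its affine $\mbD$-hull contains the affinely independent points $x_{0},\dots,x_{n}$, so $\dim\langle B\rangle=n$, which completes the argument. The step I expect to require the most care is the dimension bookkeeping in the second paragraph: one must use that, for affine $\mbD$-spaces, being $n$-dimensional is equivalent to admitting $n+1$ but not $n+2$ affinely independent points, and that the dyadic affine hull of a set is always contained in the dyadic points of its real affine hull. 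This last inclusion can be strict --- for instance $\mr{aff}_{\mbD}\{(0,0),(3,3)\}=\{(3d,3d)\mid d\in\mbD\}$ is properly smaller than the set of dyadic points of the real line joining the two points --- but enlarging the dyadic hull to the real one only weakens the upper bound on the dimension, so the inequality $\dim\mr{aff}_{\mbD}(C)\le p-1$ still holds and the proof goes through.
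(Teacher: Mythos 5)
Your argument is correct, and it is the natural one: the paper itself does not prove this lemma but imports it from \cite[L.~2.2]{CR13}, and the expected proof is exactly the one you give --- extract $n+1$ affinely independent points of $C$ (which must exist, by your dimension count, since $\dim\mr{aff}_{\mbD}(C)=n$) and invoke the fact recalled in \S\ref{S:2.4} that any affinely independent $(n+1)$-tuple freely generates, under $\circ$, an algebraic dyadic simplex isomorphic to $\mbSn$. Your care with the distinction between $\mr{aff}_{\mbD}$ and the dyadic points of the real affine hull is well placed but, as you note, harmless here since only an upper bound on the dimension is needed.
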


\begin{corollary}\label{C:subsimpl}
Each $n$-dimensional geometric dyadic convex set $C$, where $n = 1, 2, \dots$, contains infinitely many geometric subgroupoids isomorphic to the dyadic simplex $\mbS_n$.
\end{corollary}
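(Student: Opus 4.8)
The plan is to realize the required subgroupoids as affine images of the free dyadic simplex $\mbS_n$. Note first that Lemma~\ref{L:subsimpl} by itself is not enough: it yields a subgroupoid isomorphic to $\mbS_n$, but such a subgroupoid need not be geometric (for instance $(0,0),(3,0),(0,3)$ generate a copy of $\mbS_2$ inside $\mbD^2$ that is not geometric), and it produces only one. So I would argue directly. Since $C$ is geometric and $n$-dimensional, $\mr{conv}_{\mbR}(C)$ is an $n$-dimensional convex subset of $\mbR^n$, hence has nonempty Euclidean interior; by density of $\mbD^n$ in $\mbR^n$ that interior contains a dyadic point $p$, and $p\in\mr{int}(C)$ by the definition of the relative interior. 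Fix $\varepsilon>0$ with the closed ball $\ov{B}(p,\varepsilon)\subseteq\mr{conv}_{\mbR}(C)$.

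For a positive integer $k$ let $\p_k\colon\mbD^n\to\mbD^n$ be the map $x\mapsto p+2^{-k}x$. Because $2$ is a unit of $\mbD$, the inverse $x\mapsto 2^{k}(x-p)$ again has dyadic coefficients, so $\p_k$ is an automorphism of the affine $\mbD$-space $\mbD^n$; it is the restriction to $\mbD^n$ of the affine automorphism $\ov{\p}_k$ of $\mbR^n$ given by the same formula. Recall from Section~\ref{S:2.4} that $\mbS_n$ is geometric, i.e.\ $\mbS_n=\mbS_n^{r}\cap\mbD^n$ where $\mbS_n^{r}$ is the real simplex on the vertices~\eqref{E:freegens}. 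Since $\p_k$ is a bijection of $\mbD^n$ which is the restriction of $\ov{\p}_k$,
\[
\p_k(\mbS_n)=\p_k(\mbS_n^{r}\cap\mbD^n)=\ov{\p}_k(\mbS_n^{r})\cap\mbD^n ,
\]
so $\p_k(\mbS_n)$ is the intersection with $\mbD^n$ of the real simplex $\ov{\p}_k(\mbS_n^{r})$, hence a geometric subgroupoid of $\mbD^n$, and it is isomorphic to $\mbS_n$ via $\p_k$. The simplex $\ov{\p}_k(\mbS_n^{r})$ has vertices $p,p+2^{-k}e_1,\dots,p+2^{-k}e_n$, and each of its points, being of the form $p+2^{-k}\sum_{i\ge 1}\lambda_i e_i$ with $\lambda_i\ge 0$ and $\sum_{i\ge 1}\lambda_i\le 1$, lies within Euclidean distance $2^{-k}$ of $p$. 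Hence for $k$ large enough that $2^{-k}\le\varepsilon$ we get $\ov{\p}_k(\mbS_n^{r})\subseteq\ov{B}(p,\varepsilon)\subseteq\mr{conv}_{\mbR}(C)$, and therefore, using $C=\mr{conv}_{\mbR}(C)\cap\mbD^n$ (combine~\eqref{E:convhull} and~\eqref{E:geom}),
\[
\p_k(\mbS_n)=\ov{\p}_k(\mbS_n^{r})\cap\mbD^n\subseteq\mr{conv}_{\mbR}(C)\cap\mbD^n=C .
\]
Thus $\p_k(\mbS_n)$ is a geometric subgroupoid of $C$ isomorphic to $\mbS_n$.

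To finish, observe that as $k$ ranges over the infinitely many integers with $2^{-k}\le\varepsilon$ the subgroupoids $\p_k(\mbS_n)$ are pairwise distinct: $\p_k(\mbS_n)$ contains the point $p+2^{-k}e_1$ at Euclidean distance $2^{-k}$ from $p$, yet no point of $\p_k(\mbS_n)$ lies farther than $2^{-k}$ from $p$, so the scale $2^{-k}$ is determined by the set. Hence $C$ contains infinitely many geometric subgroupoids isomorphic to $\mbS_n$, as claimed. I do not anticipate a genuine obstacle; the one step requiring care is the set identity $\p_k(S\cap\mbD^n)=\ov{\p}_k(S)\cap\mbD^n$, which relies on $\p_k$ being a \emph{bijective} affine self-map of $\mbD^n$ — this is exactly why the contraction factor must be a unit of $\mbD$ (a power of $2$) rather than an arbitrary small dyadic number, and it is precisely the point at which geometricity of the image, not merely isomorphism with $\mbS_n$, is secured.
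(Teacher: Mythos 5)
Your proof is correct and takes essentially the same approach as the paper's: the paper likewise observes that the scaled simplices with vertices $e'_i=2^{-k}e_i$ are geometric and translates them by an interior point $a$ of $C$, choosing $k$ large enough that all $a+e'_i$ lie in $C$. Your write-up merely makes explicit two points the paper leaves implicit, namely that geometricity of the image is preserved because $2^{-k}$ is a unit of $\mbD$ (so the scaling is an affine $\mbD$-automorphism) and that the infinitely many copies are pairwise distinct.
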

\begin{proof}
First note that, for each $k \in \mb{Z}$, the elements $e'_{i} = (1/2^{k}) e_i$, where $i = 0,1, \dots, n$, generate a geometric dyadic simplex $S'_n$. For any interior point $a$ of $C$, there is $k \in \mbZ$ such that all $a'_i = a + e'_i$ belong to $C$ and generate a geometric dyadic simplex, the translate of $S'_{n}$.
\end{proof}

For a positive integer $n$, let $C$ be an $n$-dimensional algebraic dyadic convex set. Take $k > n$. We say that $C$ is \emph{$k$-generated} if $C$ is minimally generated by a set of $k$ generators. Thus $C$ has a $k$-element set of generators, but not a smaller one.

\begin{lemma}\label{L:simp}
Let $C$ be an $n$-dimensional algebraic dyadic convex set, for $0 < n \in \mathbb Z$. Then $C$ is isomorphic to $\mbSn$ if and only if it is $(n+1)$-generated, or equivalently, it is generated by its $n+1$ vertices.
\end{lemma}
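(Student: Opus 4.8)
The plan is to prove the slightly more structured statement that, for the $n$-dimensional algebraic dyadic convex set $C$, the three conditions
\emph{(i)} $C \cong \mbSn$; \emph{(ii)} $C$ is $(n+1)$-generated; \emph{(iii)} $C$ has exactly $n+1$ vertices and is generated by them, are pairwise equivalent, which I would do by running the cycle (i) $\Rightarrow$ (ii) $\Rightarrow$ (iii) $\Rightarrow$ (i). The one fact used throughout is that if a subset $X$ generates the subgroupoid $C$, then $\mr{aff}_{\mbD}(C) = \mr{aff}_{\mbD}(X)$, because $X \subseteq C \subseteq \mr{aff}_{\mbD}(X)$ and $\mr{aff}_{\mbD}$ is a closure operator. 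Since $\dim C = n$ by hypothesis, this forces every generating set of $C$ to have at least $n+1$ elements, and forces a generating set of size exactly $n+1$ to consist of affinely independent points (over $\mbD$, equivalently over $\mbR$), as $n+1$ points whose affine hull has dimension $n$ must be affinely independent.

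For (i) $\Rightarrow$ (ii): since $\mbSn$ is generated by $e_0,\dots,e_n$ (Section~\ref{S:2.4}), an isomorphic copy $C$ is generated by $n+1$ elements; by the affine-hull identity and $\dim C = n$ it cannot be generated by fewer, so $C$ is $(n+1)$-generated. For (ii) $\Rightarrow$ (iii): let $X = \{x_0,\dots,x_n\}$ generate $C$ with $|X| = n+1$ minimal; then $X$ is affinely independent, and $\mr{conv}_{\mbR}(C) = \mr{conv}_{\mbR}(X)$ is the real $n$-simplex on $x_0,\dots,x_n$ — one inclusion because $\mr{conv}_{\mbR}(X)$ is a real convex set containing $X$, hence containing the subgroupoid $C$ it generates (it is closed under $\circ$, a barycentric operation), the other trivial. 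Consequently $\mr{conv}_{\mbD}(C)$ is a dyadic simplex whose vertex set is exactly $\{x_0,\dots,x_n\}\subseteq C$; so $C$ has $n+1$ vertices and they generate $C$. For (iii) $\Rightarrow$ (i): if the $n+1$ vertices $v_0,\dots,v_n$ generate $C$, then by the affine-hull identity and $\dim C = n$ they are affinely independent, and an affinely independent $(n+1)$-element set generates an algebraic dyadic simplex isomorphic to $\mbSn$ — this is the realization of the free commutative binary mode recalled in Section~\ref{S:2.4} (see \cite{PRS03}) — so $C \cong \mbSn$.

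The only step that is not mere dimension bookkeeping is the last one: one must see that $n+1$ affinely independent generators make $C$ equal to (an isomorphic copy of) $\mbSn$ itself, not merely a homomorphic image of it. This is exactly where affine independence rather than affine spanning is used: the surjection $\mbSn \to C$, $e_i \mapsto x_i$, has the form $\sum e_i r_i \mapsto \sum x_i r_i$, and affine independence of the $x_i$ lets one recover the barycentric coordinates $r_i$ from $\sum x_i r_i$, so the map is injective as well. I expect this to be the main (and essentially only) point requiring care; everything else follows from the closure-operator property of $\mr{aff}_{\mbD}$ and the bound $\dim \mr{aff}_{\mbD}(X) \le |X| - 1$.
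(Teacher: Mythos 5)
Your proposal is correct and follows essentially the same route as the paper's proof: observe that a minimal generating set of size $n+1$ for an $n$-dimensional set must be affinely independent (via the closure-operator identity $\mr{aff}_{\mbD}(C)=\mr{aff}_{\mbD}(X)$), and then invoke freeness to turn the assignment $e_i\mapsto x_i$ into an isomorphism. The only cosmetic difference is that the paper works through the free real barycentric algebra $\mbSn^r$ and restricts the resulting isomorphism to the dyadic points, whereas you work directly with the free $\mc{CB}$-mode $\mbSn$ and verify injectivity by recovering barycentric coordinates; these are equivalent uses of the same affine-independence fact.
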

\begin{proof}
We already know that if $C$ is isomorphic to a simplex $\mbSn$, then it is $(n+1)$-generated and that its vertices form a minimal set of generators.

We will show that each $n$-dimensional $(n+1)$-generated algebraic dyadic convex set $C$ is isomorphic to $\mbSn$. This is certainly true for $n = 1$: a one-dimensional $2$-generated convex set is isomorphic to $\mbD_1 = \mbS_1$. Let $n \geq 1$. Consider an $n$-dimensional convex set $C$ minimally generated by the set $X = \{x_0, \dots, x_n\}$. Note that $X$ is affinely independent. (Otherwise $\dim(C)$ would be less than $n$.) The generators of $C$ generate a real $n$-dimensional $(n+1)$-generated polytope, which is obviously a real simplex. Since $\mbSn^r$ is a free barycentric algebra, the mapping
\[
e_0 \mapsto x_0,\, e_1 \mapsto x_1, \dots, e_n \mapsto x_n
\]
extends uniquely to the isomorphism between the barycentric algebras $\mbSn^r$ and $\mr{conv}_{\mbR}(C)$. This isomorphisms restricts to the isomorphism between the dyadic convex sets $\mbSn$ and $C$.
\end{proof}

Note that the convex sets of Lemma~\ref{L:simp} isomorphic to $\mbSn$  are not necessarily geometric.
On the other hand, if $1<i\in\mathbb Z$, then $n$-dimensional and $(n+i)$-generated convex subsets of $\mathbb{D}^n$ are not necessarily isomorphic. Examples are given by the dyadic $3$-generated intervals, and by the (geometric) triangles on $\mbD^2$ with vertices $(-1,0), (0,1)$ and $(0,2k+1)$ for positive integers $k$, which are all generated by four (but not fewer) elements, and no two of which are isomorphic \cite[\S4]{MRS11}.

\begin{corollary}\label{C:geomsiml}
For $0<n\in\mathbb Z$, an $n$-dimensional and $(n+1)$-generated algebraic dyadic convex set $C$ is a geometric simplex precisely if $C$ coincides with its convex $\mbD$-hull $\mr{conv}_{\mbD}(C)$.
\end{corollary}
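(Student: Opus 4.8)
The plan is to obtain the corollary by combining Lemma~\ref{L:simp} with the characterization~\eqref{E:geom} of geometric dyadic convex sets. First I would apply Lemma~\ref{L:simp}: since $C$ is $n$-dimensional and $(n+1)$-generated, it is isomorphic to the dyadic simplex $\mbSn$ and is generated by its $n+1$ vertices $x_0,\dots,x_n$, which are affinely independent (otherwise $\dim C<n$). In particular $C$ is already an algebraic dyadic simplex, so the assertion ``$C$ is a geometric simplex'' means exactly ``$C$ is geometric''. Moreover $\mr{aff}_{\mbD}(C)$ is the affine $\mbD$-hull of the $n+1$ affinely independent points $x_0,\dots,x_n$, hence the free affine $\mbD$-space on $n+1$ generators, that is, isomorphic to $\mbD^{n}$; passing to an isomorphic copy I may therefore assume $C\subseteq\mbD^{n}$ with $\mr{aff}_{\mbD}(C)=\mbD^{n}$, so that~\eqref{E:geom} applies as stated.

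Then I would prove the two implications. Suppose first that $C$ is geometric, i.e. $C=D\cap\mbD^{n}$ for some convex subset $D$ of $\mbR^{n}$. Since $D$ is convex and contains $C$, it contains $\mr{conv}_{\mbR}(C)$, and hence, using~\eqref{E:convhull},
\[
C\ \subseteq\ \mr{conv}_{\mbD}(C)\ =\ \mr{conv}_{\mbR}(C)\cap\mbD^{n}\ \subseteq\ D\cap\mbD^{n}\ =\ C,
\]
so $C=\mr{conv}_{\mbD}(C)$. Conversely, suppose $C=\mr{conv}_{\mbD}(C)$. The real hull $\mr{conv}_{\mbR}(C)$ is the barycentric subalgebra of $(\mbR^{n},\underline{I}^{\circ})$ generated by the affinely independent points $x_0,\dots,x_n$, hence a real $n$-dimensional simplex with dyadic vertices; therefore $\mr{conv}_{\mbD}(C)=\mr{conv}_{\mbR}(C)\cap\mbD^{n}$ is geometric (in fact a dyadic polytope). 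Thus $C$, being equal to $\mr{conv}_{\mbD}(C)$, is geometric, and since $C\cong\mbSn$ it is a geometric dyadic simplex, as required.

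I do not expect a genuine obstacle here: once Lemma~\ref{L:simp} has reduced the situation to a simplex, the corollary is essentially a reformulation of~\eqref{E:geom}. The only step that needs attention is the bookkeeping carried out in the first paragraph, namely placing $C$ in an ambient dyadic space in which it is full-dimensional, so that $\mr{conv}_{\mbD}(C)$ and the predicate ``geometric'' carry their intended meaning and~\eqref{E:geom} is literally applicable; this is harmless precisely because the affine $\mbD$-hull of $n+1$ affinely independent points is free, hence isomorphic to $\mbD^{n}$.
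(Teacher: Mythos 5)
Your second paragraph is correct and is exactly the argument the paper intends (the corollary is stated there without proof, as an immediate consequence of Lemma~\ref{L:simp} and the characterization~\eqref{E:geom}): both implications are sound, and they need nothing beyond the fact that $C$ sits in some ambient dyadic space $\mbD^m$.

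The first paragraph, however, contains a step you should delete: ``passing to an isomorphic copy I may assume $\mr{aff}_{\mbD}(C)=\mbD^{n}$.'' Neither side of the equivalence is invariant under groupoid isomorphism --- being geometric and coinciding with $\mr{conv}_{\mbD}(C)$ both depend on how $C$ is embedded in $\mbD^{n}$, and that dependence is the entire content of the corollary. The paper's own example of the algebraic simplex generated by $(0,\dots,0),(3,0,\dots,0),\dots,(0,\dots,0,3)$ is isomorphic to $\mbSn$ but is neither geometric nor equal to its convex $\mbD$-hull, whereas the isomorphic copy $\mbSn$ satisfies both conditions. Worse, once you arrange $\mr{aff}_{\mbD}(C)=\mbD^{n}$, the $n+1$ affinely independent vertices become free generators of the affine $\mbD$-space $\mbD^{n}$, so $e_i\mapsto x_i$ extends to an affine automorphism carrying $\mbSn$ onto $C$; such a $C$ is then automatically geometric and automatically equals its hull, the equivalence becomes vacuous for the copy, and nothing has been proved about the original $C$. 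Since the two implications in your second paragraph never actually use this normalization (they only use that $C$ is an algebraic convex subset of some $\mbD^m$, together with \eqref{E:convhull} and Lemma~\ref{L:simp}), the proof stands once that sentence is struck.
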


\subsection{Further comments}\label{S:2.5}

In this paper, dyadic polytopes and their subgroupoids are considered as subreducts of their affine $\mbD$-hulls. After introducing coordinate axes in such a space, a given polytope will be located in the corresponding $\mbD$-module by providing the coordinates of its vertices. An $n$-dimensional affine $\mbD$-space may be considered as a subreduct of the $n$-dimensional real space $\mbR^n$. The free generators of the dyadic simplex $\mbSn$ given by~\eqref{E:freegens} are also free generators of the free affine $\mbD$-space $\mbD^n$ and of the free affine $\mbR$-space $\mbR^n$.

Finally, recall that automorphisms of the affine $\mbD$-space $\mbD^n$ form the $n$-dimensional affine group $\mathrm{GA}(n,\mbD)$ over the ring $\mbD$, the group generated by the linear group $\mathrm{GL}(n,\mbD)$
and the group of translations of the space $\mbD^n$. Moreover, each element of the affine group $\mathrm{GA}(n,\mbD)$ is also an automorphism of the groupoid $(\mbD^n,\circ)$, and transforms any polytope in $\mbD^n$ onto an isomorphic polytope. On the other hand, each automorphism of a nontrivial $n$-dimensional polytope extends to an automorphism of the affine $\mb{D}$-space $\mb{D}^n$. Indeed, by Corollary~\ref{C:subsimpl}, any such polytope contains the free simplex $\mbSn$, with vertices generating the affine hull of the simplex and the polytope.
Isomorphisms of such polytopes in $\mbD^n$ will be considered as restrictions of automorphisms of the affine space $\mbD^n$.

\section{Generating dyadic polytopes}

It was shown in \cite{MRS11} that all dyadic intervals, all dyadic triangles, and more generally all dyadic polygons, are finitely generated. In this section we will show that the same holds for all dyadic polytopes.

Recall that dyadic polytopes, and in particular dyadic intervals, are geometric.
The following basic lemma is a direct consequence of the results of \cite[\S3]{MMR19}.

\begin{lemma}\label{L:3gen}
Let $a, b, c \in \mbD$ with $a < b < c$. If one of the dyadic intervals $[a,b]$ or $[b,c]$ forms a dyadic simplex, then the groupoid generated by $\{a, b, c\}$ coincides with the dyadic interval $[a,c]$.
\end{lemma}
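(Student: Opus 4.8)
The plan is to treat the case ``$[a,b]$ is the dyadic simplex'' directly and to reduce the case ``$[b,c]$ is the dyadic simplex'' to it by a reflection; in the direct case the idea is to build up the dyadic interval $[a,c]$ starting from $[a,b]$ and roughly doubling the ``reach'' at each stage. One inclusion is free: $[a,c]\cap\mbD$ is a subgroupoid of $(\mbD,\circ)$ containing $a,b,c$, so $\langle a,b,c\rangle\subseteq[a,c]\cap\mbD$, and the task is the reverse inclusion. Put $S:=\langle a,b,c\rangle$.

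The engine is the following \emph{extension step}: if $p<q$ in $\mbD$ with $[p,q]\cap\mbD\subseteq S$, and $u\in S$ satisfies $q<u\le 2q-p$, then $[p,u]\cap\mbD\subseteq S$. To prove it, set $q_0:=q$ and $q_{j+1}:=q_j\circ u$, and check by induction that $[p,q_j]\cap\mbD\subseteq S$. Indeed $\{x\circ u\mid x\in[p,q_j]\cap\mbD\}=[(p+u)/2,\,q_{j+1}]\cap\mbD$, and the hypothesis $u\le 2q-p\le 2q_j-p$ gives $(p+u)/2\le q_j$, so this new piece overlaps $[p,q_j]\cap\mbD$ and no gap is created. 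Since $q_j=u-(u-q)/2^j$ increases to $u$, the union of the sets $[p,q_j]\cap\mbD$ is $[p,u)\cap\mbD$; adjoining $u\in S$ gives $[p,u]\cap\mbD\subseteq S$.

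Now suppose $[a,b]$ forms a dyadic simplex. By Lemma~\ref{L:simp} (case $n=1$) this says exactly that $[a,b]\cap\mbD=\langle a,b\rangle\subseteq S$. Set $v_i:=a+(c-a)/2^i$, so $v_0=c$, $v_{i+1}=a\circ v_i\in S$, and $v_i=2v_{i+1}-a>v_{i+1}$. Since $v_i\to a<b$, let $I\ge1$ be least with $v_I\le b$ (one has $I\ge1$ as $v_0=c>b$); then $v_{I-1}>b$ while $v_{I-1}=2v_I-a\le 2b-a$, so $v_{I-1}\in\,]b,\,2b-a]$. Apply the extension step first with $(p,q,u)=(a,b,v_{I-1})$, then successively with $(p,q,u)=(a,v_k,v_{k-1})$ for $k=I-1,\dots,1$ — at each of these later stages $v_{k-1}=2v_k-a$, so the condition $u\le 2q-p$ holds with equality. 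This climbs from $[a,b]\cap\mbD$ up through $[a,v_{I-1}]\cap\mbD,\dots$ to $[a,v_0]\cap\mbD=[a,c]\cap\mbD\subseteq S$, which is the desired inclusion. For the remaining case, $[b,c]$ a dyadic simplex, consider the reflection $\sigma(x)=a+c-x$; it belongs to $\mathrm{GA}(1,\mbD)$ and hence is an automorphism of $(\mbD,\circ)$ (Section~\ref{S:2.5}), it fixes $[a,c]\cap\mbD$ setwise, and it carries $S$ onto $\langle a,b',c\rangle$ with $b':=a+c-b$, where $a<b'<c$. Since $\sigma$ also carries $[b,c]\cap\mbD$ onto $[a,b']\cap\mbD$, the interval $[a,b']$ forms a dyadic simplex, so by the case already treated $\langle a,b',c\rangle=[a,c]\cap\mbD$; applying $\sigma$ once more gives $S=[a,c]\cap\mbD$.

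I expect the only point requiring real care to be the extension step: one must verify that the successive intervals $[p,q_j]$ genuinely chain together with no gap (this is exactly what $u\le 2q-p$ provides) and that their union exhausts $[p,u)\cap\mbD$ rather than stopping short. It is also worth recording that the hypothesis ``$[a,b]$ forms a dyadic simplex'' enters only through the identity $\langle a,b\rangle=[a,b]\cap\mbD$, so the argument does not rely on the explicit description of which intervals are dyadic simplices.
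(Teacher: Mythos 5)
Your proof is correct, but it takes a genuinely different route from the paper. The paper normalizes: it uses the isomorphism $[a,b]\cong[0,1]$ (available because $[a,b]$ is a simplex), extends it to an affine automorphism of $\mbD$ carrying $c$ to $d=(c-a)/(b-a)=(2k+1)/2^n$, rescales to the triple $\{0,2^n,2k+1\}$, and then invokes Corollary~3.9 of the earlier paper on dyadic intervals to conclude that this triple generates $[0,2k+1]$; the case where $[b,c]$ is the simplex is handled by the analogous normalization. You instead give a self-contained ``overlap-and-double'' argument: the extension step (iterating $q_{j+1}=q_j\circ u$ and checking that each new translated copy $[(p+u)/2,q_{j+1}]\cap\mbD$ of the known interval overlaps the previous one precisely because $u\le 2q_j-p$) is verified correctly, the chain $v_i=a+(c-a)/2^i$ correctly bridges from $b$ up to $c$ with each ratio exactly $2$, and the reflection $\sigma(x)=a+c-x$ is a legitimate element of $\mathrm{GA}(1,\mbD)$ fixing $[a,c]\cap\mbD$ and exchanging the two cases. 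The only hypothesis you use is $\langle a,b\rangle=[a,b]\cap\mbD$, which is exactly what ``$[a,b]$ forms a dyadic simplex'' means via Lemma~\ref{L:simp}. What each approach buys: the paper's version is shorter on the page but outsources the real work to an external result; yours makes the lemma independent of \cite{MMR19} and exposes the mechanism (density of the orbit of $\circ$-iterates plus gap-free chaining) that also underlies Lemma~\ref{L:3walls} and the line arguments in Proposition~\ref{P:rdwalls}. Both are valid.
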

\begin{proof}
If the interval $[a,b]$ is a dyadic simplex, then it is isomorphic to the dyadic interval $[0,1]$. Let $\iota\colon [a,b] \rightarrow [0,1]$ be the required isomorphism assigning $0$ to $a$ and $1$ to $b$. The mapping $\iota$ extends uniquely to the affine space isomorphism $\bar{\iota}\colon \mbD \rightarrow \mbD$ which assigns
$$
d =
\frac{c-a}{b-a}
=\frac{2k+1}{2^n}\in\mathbb D
$$
to $c$. The image $\bar{\iota}([a,c])$ is generated as a groupoid by $\{0, 1, d = \frac{2k+1}{2^n}\}$. Let $h$ be the isomorphism between $\bar{\iota}([a,c])$ and the subgroupoid $H$ generated by $\{0, 2^n, 2k+1\}$. By \cite[Cor.~3.9]{MMR19}, the groupoid $H$ coincides with the interval $[0, 2k+1]$.
Now consider the isomorphism $\overline{i} h$ between $[a,c]$ and $H$. Then note that $(\overline{i} h)^{-1}$ takes $2k+1$ to $c$, $2^n$ to $b$ and $0$ to $a$. Hence $[a,c] = \langle\{a, b, c\}\rangle$.

If $[b,c]$ is a simplex, then we take the isomorphism $\iota\colon [b,c] \rightarrow [0,1]$ assigning $0$ to $c$ and $1$ to $b$. The remaining part of the proof goes similarly to the proof of the first part.
\end{proof}

\begin{lemma}[The Density Lemma]\label{L:points}
Let $A\ne B \in \mbD^n \subset \mbR^n$. Let $l(A,B)$ be the line in the affine $\mbR^n$-space $\mbR^n$ going through $A$ and $B$. Then for any two points $C_1$ and $C_2$ of $l(A,B)$, there is a point $D$ of $l(A,B)$ belonging to $\mbD^n$ and located between $C_1$ and $C_2$.
\end{lemma}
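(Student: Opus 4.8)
The statement is essentially a density fact for the dyadic points on a line through two given dyadic points, so the plan is to parametrize the line rationally (in fact dyadically) and exploit that dyadic rationals are dense in $\mbR$. Write $A = (a_1, \dots, a_n)$ and $B = (b_1, \dots, b_n)$ with all $a_i, b_i \in \mbD$ and $A \ne B$. Then the line $l(A,B)$ is exactly the set of points $P(t) = A + t(B - A)$ for $t \in \mbR$, and for every $t \in \mbD$ the point $P(t)$ has all coordinates in $\mbD$, since $B - A$ has dyadic coordinates and $\mbD$ is closed under addition and multiplication. So $P$ maps $\mbD \subseteq \mbR$ injectively into $l(A,B) \cap \mbD^n$, and it suffices to find a dyadic value of the parameter $t$ giving a point strictly between $C_1$ and $C_2$.

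Next I would reduce to the real-line case. Since $C_1, C_2$ lie on $l(A,B)$, there are unique reals $t_1, t_2$ with $C_i = P(t_i)$; we may assume $t_1 \ne t_2$ (if $C_1 = C_2$ there is nothing between them to constrain us, but the natural reading is $C_1 \ne C_2$, and the argument below handles that case). Because $\mbD$ is dense in $\mbR$ — indeed between any two reals $t_1 < t_2$ there is a dyadic rational, e.g.\ take $n$ large enough that $2^{-n} < t_2 - t_1$ and then $\lceil 2^n t_1 \rceil / 2^n$ lies in $]t_1, t_2[$ — we can pick $t \in \mbD$ with $t$ strictly between $t_1$ and $t_2$. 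Setting $D = P(t)$ gives a point of $l(A,B) \cap \mbD^n$ that lies strictly between $C_1$ and $C_2$ on the line, because the affine bijection $t \mapsto P(t)$ preserves betweenness along $l(A,B)$.

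There is essentially no obstacle here; the only point requiring a word of care is that "between $C_1$ and $C_2$ on the line" should be read as betweenness in the one-dimensional affine structure of $l(A,B)$, which is exactly the order transported from $\mbR$ via the parametrization $P$, so the density of $\mbD$ in $\mbR$ transfers directly. The lemma is stated for later use in locating generators inside walls and polytopes, where one wants abundantly many dyadic points on a given line; this proof shows that $l(A,B) \cap \mbD^n$ is in fact order-isomorphic to $\mbD$ itself whenever $A, B$ are distinct dyadic points.
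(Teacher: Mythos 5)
Your proposal is correct and follows essentially the same route as the paper: parametrize the line by $x_i = (b_i - a_i)t + a_i$, observe that dyadic parameter values give dyadic points (since $\mbD$ is a ring), and choose a dyadic $t$ between the parameters $t_1, t_2$ of $C_1, C_2$ by density of $\mbD$ in $\mbR$. The only difference is that you spell out the density of $\mbD$ in $\mbR$ explicitly, which the paper simply asserts.
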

\begin{proof} Let $A = (a_1, \ldots, a_n)$ and $B = (b_1, \ldots, b_n)$. The line $l(A,B)$ is described by the equations
\[
x_i=(b_i-a_i)t+a_i
\]
for $i = 1, \dots, n$. Let $C_1$ and $C_2$ be any points of $l(A,B)$, $C_1$ with coordinates $(b_i-a_i)t_1 + a_i$ and $C_2$ with coordinates $(b_i-a_i)t_2 + a_i$ for some $t_1, t_2 \in \mbR$. There exists a dyadic number $d$ lying between $t_1$ and $t_2$. Then the point $D$ with coordinates $(b_i-a_i)d + a_i$ belongs to $\mbD^n$ and is located between $C_1$ and $C_2$.
\end{proof}

For an illustration of the following lemma, see Figure~\ref{F:1}.

\begin{lemma}\label{L:lines}
Let $P$ be an $n$-dimensional dyadic polytope, and let $\mbS$ be an $n$-dimensional (geometric) simplex contained in $P$. Then for any interior point $A$ of $P$ not contained in $\mbS$,
there exists a point $B$ in a maximal wall of $P$ and a point $C$ in $\rm{int}(\mbS)$ such that $A$ belongs to the intersection $l(B,C) \cap P$ of the (real) line $l(B,C)$ through $B$ and $C$ and $P$.
\end{lemma}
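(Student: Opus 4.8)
The plan is to pass to the real polytopes $C_P:=\mr{conv}_{\mbR}(P)$ and $\mbS^r:=\mr{conv}_{\mbR}(\mbS)$, which are $n$-dimensional real polytopes with dyadic vertices and satisfy $\mbS^r\subseteq C_P$. Since both are $n$-dimensional, one has $\mr{int}(P)=\mbD^n\cap\mr{int}(C_P)$ and $\mr{int}(\mbS)=\mbD^n\cap\mr{int}(\mbS^r)$, where $\mr{int}$ now denotes the ordinary (here full-dimensional) interior in $\mbR^n$. As $A$ is dyadic and $A\notin\mbS=\mbS^r\cap\mbD^n$, we get $A\notin\mbS^r$, while $A\in\mr{int}(C_P)$. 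By the correspondence between walls of $P$ and faces of $C_P$ (Propositions~\ref{P:drwalls} and~\ref{P:rdwalls}), the maximal walls of $P$ are exactly the sets $F\cap P$ with $F$ a facet of $C_P$. Hence it suffices to find a facet $F$ of $C_P$, a dyadic point $B\in\mr{relint}(F)$, and a dyadic point $C\in\mr{int}(\mbS^r)$ that are collinear with $A$: then $B$ lies in the maximal wall $F\cap P$ of $P$, $C$ lies in $\mr{int}(\mbS)=\mbD^n\cap\mr{int}(\mbS^r)$, and, $A$ being a point of $P$ on the line $l(B,C)$, we obtain $A\in l(B,C)\cap P$.

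First I would build $B$. For a point $D\in\mr{int}(\mbS^r)$, the line $l(A,D)$ meets $C_P$ in a segment with both endpoints on $\partial C_P$; since $A\in\mr{int}(C_P)$ and $D\in\mr{int}(\mbS^r)\subseteq\mr{int}(C_P)$ with $A\ne D$, exactly one of these endpoints, call it $\beta(D)$, has $A$ lying strictly between $\beta(D)$ and $D$. Up to the point reflection in $A$, the assignment $D\mapsto\beta(D)$ is the radial projection of $\mbR^n\setminus\{A\}$ onto $\partial C_P$, which is an open map; hence the image $\mc{B}:=\beta(\mr{int}(\mbS^r))$ is a nonempty open subset of $\partial C_P$. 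The relative interiors of the facets of $C_P$ are open and dense in $\partial C_P$, so $\mc{B}$ meets $\mr{relint}(F)$ for some facet $F$, and $\mc{B}\cap\mr{relint}(F)$ is a nonempty open subset of the hyperplane $\mr{aff}_{\mbR}(F)$. The vertices of $F$ are dyadic and affinely span this hyperplane, so its dyadic points are dense in it; therefore $\mc{B}\cap\mr{relint}(F)$ contains a dyadic point $B$. Unwinding the definition of $\mc{B}$, there is $D_1\in\mr{int}(\mbS^r)$ with $B=\beta(D_1)$, so that $A$, $B$, $D_1$ are collinear and $A$ lies strictly between $B$ and $D_1$.

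It remains to replace $D_1$ by a dyadic point. As $A,B\in\mbD^n$, the line $l(A,B)$ passes through two points of $\mbD^n$, so by the Density Lemma (Lemma~\ref{L:points}) its dyadic points are dense on it. This line equals $l(A,D_1)$, hence meets the open set $\mr{int}(\mbS^r)$ at $D_1$, so $l(A,B)\cap\mr{int}(\mbS^r)$ is a nonempty open subinterval, which contains a dyadic point $C$; moreover we may take $C$ close enough to $D_1$ that $A$ still lies strictly between $B$ and $C$. Then $C\in\mr{int}(\mbS^r)\cap\mbD^n=\mr{int}(\mbS)$ and $B\in\mr{relint}(F)\cap\mbD^n\subseteq F\cap P$, and $A\in l(B,C)\cap P$, as required.

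I expect the main obstacle to be precisely the step producing the dyadic boundary point $B$: running a single line through $A$ and a dyadic interior point of $\mbS$ will not work, because the point at which such a line leaves $C_P$ is in general irrational, so one must vary the line in an $(n-1)$-parameter family and exploit the openness of the radial projection from the dyadic point $A$ together with the density of dyadic points in a facet hyperplane. The remaining ingredients — identifying maximal walls of $P$ with facets of $C_P$ via the wall correspondence, the elementary polytope facts that $\mr{relint}(F)$ is open in $\partial C_P$ and that dyadic points are dense in $\mr{aff}_{\mbR}(F)$, and the closing application of the Density Lemma to force a dyadic interior point of $\mbS$ onto $l(A,B)$ — should be routine.
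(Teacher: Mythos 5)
Your argument is correct and shares the paper's overall strategy: both proofs recognize that a line through $A$ and a single dyadic interior point of $\mbS$ generally exits $\mr{conv}_{\mbR}(P)$ at a non-dyadic point, so the line must be perturbed to make the exit point $B$ dyadic while still meeting $\mr{int}(\mbS^r)$, after which the Density Lemma produces the dyadic point $C$. Where you genuinely differ is the mechanism for producing $B$. The paper works pointwise and computationally: it fixes one dyadic $C_1\in\mr{int}(\mbS)$, observes that the exit point $R_1$ of $l(A,C_1)$ on a facet $W$ has \emph{rational} coordinates (as the solution of a linear system with dyadic coefficients), deduces that the line from a dyadic vertex $W_1$ of $W$ through $R_1$ contains a second dyadic point, and then applies the one-dimensional Density Lemma along that line to find a dyadic $B$ between $R_1$ and a nearby point $R_2$ chosen so that $l(A,R_2)$ still meets $\mr{int}(\mbS^r)$. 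You instead vary over all of $\mr{int}(\mbS^r)$ at once, use openness of the radial projection from $A$ to obtain a nonempty open subset of the boundary, and invoke density of the dyadic points in the facet hyperplane (which holds because the facet's dyadic vertices affinely span it). Your route avoids the rationality computation and the slightly delicate choice of $R_2$, at the price of importing two facts the paper never needs (openness of the radial projection, and $(n-1)$-dimensional density rather than only the one-dimensional Density Lemma); it also delivers directly the strict betweenness of $A$ on the segment $[B,C]$, which is what the proof of Theorem~\ref{P:fingen} actually uses afterwards. Both arguments are sound.
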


\begin{figure}[bht]
	\begin{center}
		\begin{picture}(340,320)(0,0)

		\put(30,20){\line(1,0){190}}
		\put(30,20){\line(-1,5){30}}
		\put(0,170){\line(1,1){120}}
		\put(120,290){\line(3,-2){222}}
		\put(220,20){\line(1,1){122}}
		\put(30,30){\line(1,0){150}}
		\put(30,30){\line(0,1){150}}
		\put(180,30){\line(-1,1){150}}
		\put( 160,80){\circle*{5}}
		\put(160,85){$A$}
		\put(280,80){\circle*{5}}
		\put(275,67){$B$}
	  	\put(50,80){\circle*{5}}
		\put(42,84){$C$}
			\put(35,80){\circle*{5}}
		\put(35,69){$R_3$}
				\put(95,80){\circle*{5}}
		\put(85,69){$R_4$}
		\put(30,80){\line(1,0){300}}
		\put(220,20){\circle*{5}}
		\put(227,17){$W_1$}
		\put(50,53){\circle*{5}}
		\put(41,42){$C_2$}
		\put(320,120){\circle*{5}}
		\put(320,120){\line(-4,-1){270}}
	    \put(244,35){$R_1$}	\put(250,50){\circle*{5}}
	    \put(250,50){\line(-3,1){200}}
	    \put(325,118){$R_2$}
	
	    		 \put(43,122){$C_1$}	\put(50,117){\circle*{5}}

		\end{picture}
	\end{center}
	\caption{}
	\label{F:1}
\end{figure}

\begin{proof}
We consider $P$ as a subset of $\mbD^n$ and $\mbR^n$ as explained earlier. Let $Q = \mr{conv}_{\mbR}(P)$.
Take $C_1\in \mr{int}(\mbS)$. There is a maximal wall $W$ of $Q$ such that the real line $l(A, C_1)$ through $A$ and $C_1$ has just one point of intersection $R_1$ with $W$. Note that $R_1$ is not necessarily a member of $\mbD^n$. The wall $W$ generates the hyperplane $H = \mr{aff}_{\mbR}(W)$ in $\mbR^n$. Let $W_1 = (d_1, \dots, d_n) \neq R_1$ be one of the vertices of $W$. Since the vertices of $W$ belong to $\mbD^n$, there is a vector $v = (v_1, \dots, v_n)$ with dyadic coordinates such that the hyperplane $H$ is described by the equation
\[
\sum_{i=1}^{n} v_i (x_i - d_i) = 0.
\]
The point $R_1$ is the intersection of the hyperplane $H$ and the line $l(A, C_1)$ given by the equations
\[
x_i = (c_i-a_i)t+a_i,
\]
where $i = 1, \dots, n$, $A = (a_1, \dots, a_n)$ and $C_1 = (c_1, \dots, c_n)$.	
Hence $R_1$ has rational coordinates  ${r_i} / {s_i}$, as the solutions of a system of linear equations with dyadic and hence rational coefficients.
	
The line $l(W_1, R_1)$ has the equations
\[
x_i = (\frac{r_i}{s_i} - d_i)t + d_i,
\]
and contains the dyadic point $D \neq W_1$ obtained for $t$ equal to the least common multiple of all the $s_i$.

Recall that $C_1$ belongs to $\mr{int}(\mbS)$ and $\mbS$ is contained in the real simplex $\mbS^r = \mr{conv}_{\mbR}(\mbS)$.
Choose a point $R_2$ of $l(W_1, R_1) = l(W_1, D) $ different from $R_1$ such that the line $l(R_2, A)$ has a nonempty intersection $C_2$ with $\mr{int}(\mbS^r)$.
Since the line $l(W_1, R_1)$ contains two dyadic points, it follows by the Density Lemma~\ref{L:points}, that there is a dyadic point $B$ located between $R_1$ and $R_2$. The (real) line $l(A,B)$ through $A$ and $B$ has a non-empty intersection with $\mr{int}(\mbS^r)$. Let $R_3$ and $R_4$ be any two different real points in $l(A,B) \cap \mr{int}(\mbS^r)$. Again by Density Lemma, there is a dyadic point $C$ located between $R_3$ and $R_4$.
\end{proof}

\begin{theorem}\label{P:fingen}
Each $n$-dimensional dyadic polytope is finitely generated.
\end{theorem}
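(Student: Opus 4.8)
The plan is to argue by induction on the dimension $n$ of the dyadic polytope $P$. The case $n=0$ is a single point, and the case $n=1$ is the fact recalled in Section~\ref{S:I} that every dyadic interval is generated by two or three elements; so suppose $n\ge 1$ and that every dyadic polytope of dimension strictly less than $n$ is finitely generated. Since $P$ is a polytope it has only finitely many walls (each wall is determined by the set of vertices of $P$ it contains), and by Corollary~\ref{C:wallsdp} each proper wall is again a dyadic polytope, necessarily of dimension $<n$, hence finitely generated by the inductive hypothesis. Fixing a finite generating set for each proper wall and letting $G_1$ be the union of these sets, we obtain a finite set such that $\langle G_1\rangle$ contains every proper wall of $P$, and therefore contains the whole boundary of $P$ (the union of the proper walls). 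It remains to capture the interior points of $P$.

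By Corollary~\ref{C:subsimpl}, $P$ contains an $n$-dimensional geometric simplex $\mbS\cong\mbSn$; let $G_2$ be the set of its $n+1$ vertices, so that $\mbS=\langle G_2\rangle$ and, in particular, every point of $\mr{int}(\mbS)$ lies in $\langle G_2\rangle$. Now let $A\in\mr{int}(P)$. If $A\in\mbS$ we are done, so assume $A\notin\mbS$. Lemma~\ref{L:lines} then supplies a point $B$ in a maximal wall of $P$ and a point $C\in\mr{int}(\mbS)$ such that $A$ lies on the real line $l=l(B,C)$, between $B$ and $C$; note $A\neq B$ (since $A$ is interior and $B$ is not) and $A\neq C$ (since $A\notin\mbS$), so $A$ is strictly between $B$ and $C$. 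Here $B$ lies in a proper wall, hence $B\in\langle G_1\rangle$, while $C\in\langle G_2\rangle$.

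The remaining step is the one peculiar to the dyadic setting, and is where I expect the main difficulty to lie: $B$ and $C$ alone need not generate the dyadic interval joining them — for instance $0$ and $3$ generate only the multiples of $3$ in $[0,3]\cap\mbD$ — so a third, very carefully placed point is needed. The dyadic points of $l$ form a dyadic line, a one-dimensional affine $\mbD$-subspace of $\mbD^n$, which is isomorphic as an affine $\mbD$-space to $\mbD$; fix such an isomorphism $\psi\colon\mbD\to l\cap\mbD^n$. Composing with a translation and, if necessary, with $x\mapsto -x$, we may assume $\psi(0)=B$ and $\psi^{-1}(C)=\gamma>0$, whence $\psi^{-1}(A)\in\mbD$ is strictly between $0$ and $\gamma$. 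Since $\mr{int}(\mbS)\cap l$ is an open subset of $l$ containing $C$ but not $B$, for every sufficiently large $j$ the number $\gamma-2^{-j}$ is strictly between $0$ and $\gamma$ and $\psi(\gamma-2^{-j})\in\mr{int}(\mbS)$; fix such a $j$ and set $D:=\psi(\gamma-2^{-j})$, so that $B$, $D$, $C$ correspond under $\psi$ to $0<\gamma-2^{-j}<\gamma$. As $\gamma-(\gamma-2^{-j})=2^{-j}$, the dyadic interval $[\gamma-2^{-j},\gamma]\subseteq\mbD$ is isomorphic to $\mbD_1$, a dyadic simplex, so by Lemma~\ref{L:3gen} the set $\{0,\gamma-2^{-j},\gamma\}$ generates $[0,\gamma]\cap\mbD$; applying the groupoid isomorphism $\psi$, the set $\{B,D,C\}$ generates the dyadic interval from $B$ to $C$, which contains $A$. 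Since $B\in\langle G_1\rangle$ and $D,C\in\mr{int}(\mbS)\subseteq\langle G_2\rangle$, we conclude $A\in\langle\{B,D,C\}\rangle\subseteq\langle G_1\cup G_2\rangle$. Thus every point of $P$ lies in $\langle G_1\cup G_2\rangle$ — boundary points via $G_1$, points of $\mbS$ via $G_2$, and all remaining interior points by the construction above — so $P=\langle G_1\cup G_2\rangle$ is finitely generated, closing the induction. The genuinely delicate point is the placement of the auxiliary point $D$: an arbitrary dyadic point near $C$ will not do, since halving towards $C$ does not change the isomorphism type of the interval; one must land exactly at distance $2^{-j}$ from $C$ in the coordinate $\psi$ so that $[D,C]$ becomes a simplex and Lemma~\ref{L:3gen} becomes applicable, and it is precisely the density of such points at $C$ (in the spirit of the Density Lemma~\ref{L:points}) that makes this possible.
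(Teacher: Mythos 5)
Your proof is correct and follows essentially the same route as the paper: induction on dimension, with the boundary handled by the finitely generated proper walls, the interior reached via Lemma~\ref{L:lines} from a point $B$ on a maximal wall and a point $C$ in a fixed inner simplex, and the three-point generation of the dyadic interval $[B,C]$ supplied by Lemma~\ref{L:3gen}. The only (harmless) difference is that you construct the auxiliary point $D$ explicitly by parametrizing the dyadic points of the line and landing at coordinate distance $2^{-j}$ from $C$, where the paper instead invokes Corollary~\ref{C:subsimpl} to produce a one-dimensional geometric simplex $[D,C]$ inside $\mbS\cap[B,C]$.
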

\begin{proof}
The proof is by induction on $n$. Suppose that $P$ is a (nontrivial) $n$-dimensional dyadic polytope. If $n = 1$, then $P$ is a dyadic interval, and by the results of \cite{MRS11}, it is (minimally) generated by $2$ or $3$ generators.

Now let $n > 1$. Assume that the theorem holds for all polytopes of dimension smaller than $n$. Let the dimension of $P$ be $n$. The polytope $P$ contains finitely many $(n-1)$-dimensional walls, say $W_1, \dots, W_r$. By the induction hypothesis, each of them is finitely generated. Moreover, by Corollary~\ref{C:subsimpl}, the polytope $P$ contains infinitely many geometric $n$-dimensional simplices. Let $\mbS$ be one of them. Obviously, each point of $\mbS$ is generated by the vertices of $\mbS$.

We will show that any point $A$ of $P$, contained neither in any of the walls $W_1, \dots, W_r$, nor in the simplex $\mbS$, is generated by the generators of these walls and the generators of the simplex.
By Lemma~\ref{L:lines}, there is a point $B$ in a maximal wall $W_i$ and a point $C$ in the interior $\rm{int}(\mbS)$ of $\mbS$ such that the interval with ends $B$ and $C$ contains $A$. Without loss of generality, assume that this interval is $[B, C]$.
Consider the (geometric) subset $\mbS \cap [B,C]$ of the line $l(A,B)$. By Corollary~\ref{C:subsimpl}, this subset contains a one-dimensional simplex, say $[D,C]$, with $D$ between $A$ and $C$. Note that the points $D$ and $C$ are generated by the generators of the simplex $\mbS$, and the point $B$ is generated by the generators of $W_i$. By Lemma~\ref{L:3gen}, the points $B, C$ and $D$ generate the whole interval $[B,C]$ containing the point $A$. It follows that the polytope $P$ is generated by the finite number of the generators of $W_1, \dots, W_r$ and of the simplex~$\mbS$.
\end{proof}

\begin{remark}\label{R:genpol}
Recall from Section~\ref{S:2} that each $n$-dimensional polytope $P$ is the union of its maximal walls and the interior $\rm{int}(P)$ of $P$. The proof of Theorem~\ref{P:fingen} shows that $P$ is generated by the generators of these walls and $n+1$ vertices of an $n$-dimensional geometric simplex $\mbS$ contained in $P$. As the maximal walls are $(n-1)$-dimensional polytopes themselves, the proof provides a recursive method of finding generators of $P$. When the generators of the maximal walls have already been determined, it suffices to take the generators of the maximal walls and the vertices of $\mbS$ as the generators of $P$.
\end{remark}

As was shown in \cite[\S3]{MMR19}, a finite number of distinct elements of the dyadic line $\mbD$ generate a convex subset isomorphic to a dyadic interval, a one-dimensional dyadic polytope. However, the relation between the finitely generated subgroupoids of the groupoid $(\mbD^n, \circ)$ and the $n$-dimensional polytopes for $n > 1$ has not yet been established. We discuss this relationship in the following section.

\section{Finitely generated subgroupoids of $\mbD^n$}

We already know that each finitely generated subgroupoid of $\mbD$ is isomorphic to a dyadic interval.
We would like to know if every finitely generated subgroupoid of $\mbD^n$ is isomorphic to a dyadic polytope.
The following example shows that if $n > 1$, then the situation may be more complicated.

\begin{example}\label{Ex:notdpol}
Consider the following four elements of the plane $\mbD^2$:
\[
A_0 = (0,0),\, A_1 = (1,3),\, A_2 = (3,0),\, A_3 = (1,1).
\]
(See Figure~\ref{F:2}.)
\begin{figure}[bht]
	\begin{center}
		\begin{picture}(170,160)(0,0)

		\put(0,20){\vector(1,0){160}}
		\put(20,0){\vector(0,1){150}}

		\put(18,140){\line(1,0){4}}
		\put(18,60){\line(1,0){4}}
		\put(10,57){$1$}
		\put(60,18){\line(0,1){4}}
		\put(57,8){$1$}
		\put(10,138){$3$}
			\put(140,20){\circle*{5}}
		\put(60,60){\circle*{5}}
\put(65,65){$A_3$}
				\put(60,140){\circle*{5}}
				\put(142,24){$A_2$}
		\put(137,8){$3$}

		\put(20,20){\circle*{5}}
		\put(5,7){$A_0$}
	
		\put(60,144){$A_1$}

		\thicklines

		\put(20,20){\line(1,0){120}}
		\put(20,20){\line(1,1){40}}
		\put(20,20){\line(1,3){40}}
		\put(140,20){\line(-2,3){80}}
 	\put(140,20){\line(-2,1){80}}
		
		\end{picture}
	\end{center}
	\caption{}
	\label{F:2}
\end{figure}
\noindent We are looking for the groupoid $G$ generated by these elements. First note that each triple of them generates an (algebraic) dyadic simplex. However two of these simplices, the one generated by $A_0, A_1, A_2$ and the one generated by $A_0, A_2, A_3$, do not contain all the points of the interval $[A_0, A_2]$, e.g. the points $(1,0)$ and $(2,0)$. It follows that these simplices are not geometric. The other two triples  generate geometric dyadic simplices. This can be shown by providing automorphisms of the dyadic plane $\mbD^2$ taking three of the corresponding generators to the points $(0,0), (1,0), (0,1)$, which are known to be the free generators of the dyadic simplex $\mbS_2$. For example, (right) multiplication by the dyadic matrix
\renewcommand{\arraystretch}{1.5}
\[
M_1 =
    \begin{bmatrix}
      -\tfrac{1}{2} & \tfrac{3}{2} \\
      \tfrac{1}{2} & -\tfrac{1}{2} \\
    \end{bmatrix}
\,,
\]
with determinant equal to $-1/2$, provides an automorphism of the affine $\mbD$-space $\mbD^2$ which takes $A_0$ to $(0,0)$, $A_1$ to $(1,0)$ and $A_3$ to $(0,1)$. Similarly, translation by $(-1,-1)$ followed by the automorphism given by the matrix
\[
M_2 = \begin{bmatrix}
      \frac{1}{4} & \frac{1}{2} \\
      \frac{1}{2} & 0 \\
    \end{bmatrix}\,,
\]
with determinant equal to $-1/4$,
takes the points $A_1$, $A_2$, $A_3$ to the points $(0,0), (1,0), (0,1)$.
In particular, the points $A_0, A_1, A_2, A_3$ generate all the points of the (closed) dyadic triangles $A_0 A_1 A_3$ and $A_1 A_2 A_3$.
\renewcommand{\arraystretch}{1}

We will show that the (dyadic) points of the interior of the triangle $A_0 A_2 A_3$ are generated by the points $A_0, A_1, A_2, A_3$. Let $B$ be any point of the interior of the triangle $A_0 A_2 A_3$. The dyadic line $l(A_0,B)$ through the points $A_0$ and $B$ has a nonempty intersection with the triangle $A_1 A_2 A_3$. By Corollary~\ref{C:subsimpl}, this intersection contains a closed interval $[C,D]$, with $C$ between $B$ and $D$, which is a  (geometric) dyadic simplex. By Lemma~\ref{L:3gen}, the points $A_0, C, D$ generate the dyadic interval $[A_0,D]$, containing the point $B$, hence also generated by the points $A_0, C, D$.

Finally, recall that not all the points of the interval $[A_0, A_2]$ are generated by $A_0$ and $A_2$. It follows that the groupoid $G$ consists of the whole interior of the triangle $A_0 A_1 A_2$, the dyadic intervals $[A_0, A_1]$ and $[A_1, A_2]$, and the proper subgroupoid of the interval $[A_0, A_2]$ generated by its ends, and consisting of the points $(d,0)$ with $0 \leq d \leq3$ with $d$ divisible by $3$. Hence the groupoid $G$ is not a dyadic triangle.
\end{example}

The groupoid $G$ of Example~\ref{Ex:notdpol} is a finitely generated subgroupoid of $(\mbD^2, \circ)$, but it is not a dyadic polytope. Furthermore, the groupoid $G$ is not isomorphic to any dyadic polytope. To see this, observe first that since $\mr{int}(A_0 A_1 A_2) \cup ]A_0 A_1] \cup [A_1 A_2[$ is geometric, it follows that the affine $\mbD$-hull of $G$ is the whole plane $\mbD^2$, and its convex $\mbD$-hull $\rm{conv}_{\mbD}(G)$ is the (closed) dyadic triangle with vertices $A_0, A_1, A_2$. Recall from Section~\ref{S:2} that an algebraic convex set $C$ is geometric precisely when it coincides with its convex $\mbD$-hull. Clearly, $G$ and $\rm{conv}_{\mbD}(G)$ differ.

We will show, however, that finitely generated finite-dimensional algebraic dyadic convex sets are not far from being polytopes.

\begin{definition}\label{D:qpol}
A subgroupoid $D$ of a dyadic polytope $C$ is called a \emph{semipolytope} if $C$ and $D$ have the same vertices and $D \, \cap \, \rm{int} (C) = \rm{int}(C)$.
\end{definition}

In particular, $C = \rm{conv}_{\mbD}(D)$. Thus the interiors of $C$ and $D$ coincide. Obviously, a semipolytope is an algebraic dyadic convex set.

\begin{remark}
In \cite{MR16}, the concept of a \emph{quasipolytope} in an affine $\mbR$-space was introduced. This is a convex set $A$ whose closure $\overline{A}$ in the affine $\mbR$-space it generates is a polytope. Note that $A$ does not need to contain the vertices of $\overline{A}$. Now define a real semipolytope, similarly as in the dyadic case. Such real semipolytopes are simply polytopes. On the other hand, quasipolytopes may be defined in the dyadic case as well. (We consider the usual Euclidean topology on $\mbR^n$, and then restrict it to $\mbD^n$.) Each dyadic semipolytope is a quasipolytope. However, dyadic quasipolytopes are not necessarily semipolytopes. Examples are provided by open polytopes (interiors of polytopes). Note also that the topological closures of geometric dyadic convex sets admit a purely algebraic description \cite{CR13}.
\end{remark}

\begin{example}
The groupoid $G$ of Example~\ref{Ex:notdpol} is a semipolytope. It has the same interior as the (closed) triangle $A_0 A_1 A_2$ and the same vertices.
\end{example}

\begin{example}\label{Ex:qpol}
Consider the semipolytope $S$ in the plane $\mbD^2$ consisting of the interior of the triangle with vertices $A_0 = (0,0)$, $A_1 = (0,1)$ and $A_2 = (9,0)$, the sides $A_0 A_1$ and $A_1 A_2$, and the subgroupoid $G$ of the side $A_0 A_2$ consisting of the pairs $(d,0)$ with $d$ divisible by $3$ (and generated by the points $A_0, A_2$ and $D = (3,0)$). The convex $\mbD$-hull of $S$ is the (closed) triangle $A_0 A_1 A_2$. Choose the point $E = (1, 1/2)$ of $S$. Since the points $A_0, E, A_1$ generate the triangle $A_0 E A_1$, Corollary~\ref{C:geomsiml} implies that $A_0 E A_1$ is geometric. Using this fact, one may show (as in Example~\ref{Ex:notdpol}) that each interior point of $S$ is generated by the points $A_0, A_1, A_2$ and $E$. Hence $S$ is finitely generated by five points $A_0, A_1, A_2, D, E$. (See Figure~\ref{F:3}.) Note that $G$ forms a wall of $S$, which is not a polytope in $\mbD^2$. Neither is it a polytope in the $x$-axis, which is a subspace of $\mbD^2$. However, $G$ is a polytope in its affine $\mbD$-hull, which consists of the elements $(d,0)$ of the $x$-axis with $d$ divisible by $3$.

\begin{figure}[bht]
	\begin{center}
		\begin{picture}(240,160)(0,0)

		\put(0,20){\vector(1,0){230}}
		\put(20,0){\vector(0,1){140}}
		
		\put(20,110){\circle*{5}}
		\put(40,18){\line(0,1){4}}
		\put(37,8){$1$}
		\put(10,118){$1$}
		\put(22,118){$A_1$}
		\put(200,20){\circle*{5}}

		\put(202,24){$A_2$}
		\put(197,8){$9$}

		\put(80,20){\circle*{5}}
		\put(82,24){$D$}
		\put(77,8){$3$}
		
		\put(20,20){\circle*{5}}
		\put(5,7){$A_0$}
		
		\put(40,65){\circle*{5}}
		\put(43,65){$E$}

		\thicklines
		
		\put(20,110){\line(2,-1){180}}
		\put(20,20){\line(2,5){18}}
		\put(20,110){\line(2,-5){18}}
		
		\end{picture}
	\end{center}
	\caption{}
	\label{F:3}
\end{figure}

\end{example}

\begin{example}
Consider again the semipolytope $S$ of Example~\ref{Ex:qpol} and the dyadic triangle $A_0 A_1 A_2$ with vertices $A_0, A_1, A_2$.
Note that the matrix
\[
\begin{bmatrix}
5 & 0 \\
0 & 3
\end{bmatrix}
\]
provides an isomorphism $\kappa$ from $\mbD^2$ with the dyadic triangle $A_0 A_1 A_2$ as its subgroupoid onto the subspace $$\{(5d,0) \mid d \in \mbD\} \times \{(0, 3d) \mid d \in \mbD\} = 5\mbD \times 3\mbD$$ of $\mbD^2$. Then note that $5\mbD \times 3\mbD$ is the affine $\mbD$-hull of $\kappa(A_0 A_1 A_2)$. The image $\kappa(A_0 A_1 A_2)$ is an algebraic dyadic convex subset of the affine $\mbD$-space $\mbD^2$ and isomorphic to the dyadic triangle $A_0 A_1 A_2$. The image $\kappa(S)$ is a finitely generated algebraic dyadic convex subset of $\mbD^2$ isomorphic to the semipolytope $S$, but is not a semipolytope.
\end{example}

Note that the walls of the semipolytopes in $\mbD^2$ of Examples~\ref{Ex:notdpol} and ~\ref{Ex:qpol} are also semipolytopes; not necessarily in $\mbD^2$, but relative to their affine $\mbD$-hulls. We will see that this is a property of general semipolytopes. First note that if $S$ is a semipolytope in $\mbD$, then it is an interval, and hence a polytope.  Note however that a polytope in $m\mbD$ for odd $m \neq 1$ is not a polytope in $\mbD$. A two-dimensional semipolytope $S$ in $\mbD^2$ has the same interior as its convex $\mbD$-hull $\mr{conv}_{\mbD}(S)$.
The walls of $S$ are subgroupoids of some dyadic lines in $\mbD^2$, but they are not necessarily geometric.
More generally, $0$-dimensional walls of an $n$-dimensional semipolytope $S$ are the vertices of $S$ and $1$-dimensional walls are the intersections of $S$ with the edges of $\mr{conv}_{\mbD}(S)$.
An $n$-dimensional semipolytope $S$ has the same interior as its convex $\mbD$-hull $\mr{conv}_{\mbD}(S)$, and this interior is obviously geometric. However, the walls of $S$ are subgroupoids of some affine subspaces of $\mbD^n$, and do not need to be geometric.

Let $C$ be an algebraic dyadic convex subset of an $n$-dimensional subspace $A$ of $\mbD^n$.
Then the \emph{convex $A$-hull} $\mr{conv}_{A}(C)$ of $C$ in $A$ is the intersection of $\mr{conv}_{\mbR}(C)$ with $A$,
\[
\mr{conv}_{A}(C) = \mr{conv}_{\mbR}(C) \cap A.
\]
Since $\mr{aff}_{\mbD}(C)$ is a subspace of $\mbD^n$, it follows that
\begin{align}\label{E:Aconv}
\mr{conv}_{\mr{aff}_{\mbD}(C)}(C) &= \mr{conv}_{\mbR}(C) \cap \mr{aff}_{\mbD}(C)\\ &= \mr{conv}_{\mbD}(C) \cap \mr{aff}_{\mbD}(C).\notag
\end{align}

\begin{proposition}\label{P:wallsepol}
Let $S$ be an $n$-dimensional semipolytope. Then the following hold.
\begin{itemize}
\item[(a)] A subset $W$ of $S$ is a wall of $S$ if and only if it is the intersection with $S$ of a wall of the convex $\mbD$-hull $\mr{conv}_{\mbD}(S)$ of $S$.
\item[(b)] Each wall $W$ of $S$ is a semipolytope in the affine $\mbD$-hull
$\mr{aff}_{\mbD}(W)$ of $W$.
\end{itemize}
\end{proposition}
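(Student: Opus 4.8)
The plan is to prove both parts by induction on the dimension $n$, leveraging the corresponding facts already established for dyadic polytopes (Propositions~\ref{P:drwalls} and~\ref{P:rdwalls}, together with the remark following Corollary~\ref{C:wallsdp} on how walls of algebraic convex sets whose convex $\mbD$-hull is a polytope behave). Write $C = \mr{conv}_{\mbD}(S)$; this is a dyadic polytope with the same vertices and the same interior as $S$, by Definition~\ref{D:qpol}. The base case $n = 1$ is immediate: a one-dimensional semipolytope is an interval in its affine $\mbD$-hull, hence a polytope, hence trivially a semipolytope in that line.

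For part (a), the forward implication follows from the general fact recorded after Corollary~\ref{C:wallsdp}: since $C = \mr{conv}_{\mbD}(S)$ is a polytope, every wall $W$ of $S$ is the intersection of $S$ with the wall $\oW := \mr{conv}_{\mbR}(W)$ of $C$ generated by $W$, and $\oW \cap S = W$. For the reverse implication, let $F$ be a wall of $C$ and consider $F \cap S$. That $F \cap S$ is a wall of $S$ is a direct check against the defining property of a wall: for $a, b \in S$ we have $a \circ b \in F \cap S$ iff ($a \circ b \in F$ and $a \circ b \in S$) iff ($a, b \in F$ and $a, b \in S$), using that $F$ is a wall of $C \supseteq S$ and that $S$ is a subgroupoid; this is exactly the argument of Proposition~\ref{P:drwalls}. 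So the walls of $S$ are precisely the sets $F \cap S$ for $F$ a wall of $C$.

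For part (b), fix a wall $W$ of $S$ of dimension $k$. By part (a), $W = F \cap S$ where $F = \oW$ is the $k$-dimensional wall of $C$ generated by $W$; note $F$ is itself a dyadic polytope (Corollary~\ref{C:wallsdp}), and its affine $\mbD$-hull $\mr{aff}_{\mbD}(W)$ is a $k$-dimensional subspace $A$ of $\mbD^n$. Inside $A$, the convex $A$-hull of $W$ is, by~\eqref{E:Aconv}, $\mr{conv}_{A}(W) = \mr{conv}_{\mbR}(W) \cap A = F \cap A = F$ (since $F \subseteq A$ already). So $\mr{conv}_{A}(W) = F$ is a polytope in $A$, and $W$ is a subgroupoid of it; it remains to check that $W$ and $F$ have the same vertices and the same relative interior. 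The vertices of $F$ are vertices of $C$, hence vertices of $S$ (Definition~\ref{D:qpol}), hence lie in $S$, hence lie in $F \cap S = W$; and conversely any vertex of $W = F \cap S$ must be a vertex of $F$ since $W \subseteq F$ and $F \cap \mr{int}(W)$-points cannot be extreme. For the interiors: an interior point of $F$ (relative to $A$) that lies in $S$ lies in $F \cap S = W$, and conversely; so one must show every dyadic interior point of $F$ lies in $S$. This is where the hypothesis that $S$ is a semipolytope — not merely an algebraic convex set with polytope hull — is essential, and it is the step I expect to be the main obstacle.

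The key to closing that gap is a density/generation argument in the spirit of Lemmas~\ref{L:3gen}, \ref{L:points} and~\ref{L:lines}. Let $p$ be a dyadic interior point of $F$ (relative to $A$). Since $\mr{int}(C)$ is geometric of full dimension $n$ and $F$ is a facet-type wall on the boundary of $C$, one can pick a dyadic interior point $q$ of $C$ close enough to $p$ that the line $l(p,q)$ in $\mbR^n$ meets $\mr{int}(C)$ in a nondegenerate segment and meets, on the far side of $p$ from $q$, the relative interior of some wall of $C$ — but more usefully, one uses Corollary~\ref{C:subsimpl} to find a geometric one-dimensional simplex $[d_0,d_1] \subseteq l(p,q) \cap \mr{int}(C)$ with $q$ between $d_0$ and $d_1$, push the line until it produces a dyadic point $b$ on the boundary wall through $p$. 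The points $d_0, d_1$ lie in $\mr{int}(C) = \mr{int}(S) \subseteq S$; and $b$ lies in a proper wall of $C$ of dimension $< n$, so by the inductive hypothesis applied to that wall of $S$ (which by part (a) is $(\text{that wall of }C) \cap S$), $b$ lies in $S$ — here one descends into the wall containing $b$, which has smaller dimension. Then Lemma~\ref{L:3gen} shows the interval $l(b, d_1) \cap \mbD$ — which contains $p$, since $p$ lies between $b$ and $d_1$ up to a geometric simplex on the $d_1$-side — is generated by $b$, $d_0$, $d_1$, all of which lie in $S$; since $S$ is a subgroupoid, $p \in S$. Thus $F \cap \mbD^n \subseteq S$ restricted to $A$ gives $\mr{int}(F) \cap A \subseteq W$, completing the verification that $W$ is a semipolytope in $A$. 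The delicate point throughout is the bookkeeping that ensures $b$ genuinely lands in a lower-dimensional wall (so the induction applies) and that $p$ sits inside a one-dimensional geometric simplex spanned by points already known to be in $S$; this is handled exactly as in the proof of Lemma~\ref{L:lines}, with $C$ playing the role of $Q = \mr{conv}_{\mbR}(P)$ and the ambient affine space replaced by $A = \mr{aff}_{\mbD}(W)$.
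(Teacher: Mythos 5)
Your part (a) is correct and is essentially the paper's argument: the forward direction comes from the remark at the end of Section~\ref{S:2.3} (applicable because $\mr{conv}_{\mbD}(S)$ is a polytope), and the reverse direction is the direct membership check of Proposition~\ref{P:drwalls}. For part (b), the first half of your argument — identifying $\mr{conv}_{A}(W)=\mr{conv}_{\mbR}(W)\cap A=F\cap A$ as the relevant $A$-polytope and matching up the vertices — is also exactly what the paper does. Where you diverge is in treating ``$W$ and $\mr{conv}_A(W)$ have the same relative interior'' as a substantive containment statement ($\mr{int}(\mr{conv}_A(W))\subseteq W$) requiring a generation argument. Under the paper's definition, the relative interior of an algebraic convex set $X$ is $\mr{relint}(\mr{conv}_{\mbR}(X))\cap\mr{aff}_{\mbD}(X)$; since $W$ and $\mr{conv}_A(W)$ have the same real convex hull and the same affine $\mbD$-hull $A$, the equality $\mr{int}(\mr{conv}_A(W))=\mr{int}(W)$ is immediate from \eqref{E:Aconv}, and the paper's proof of (b) is just this observation. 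So the long final paragraph of your proposal is not needed for the statement as formulated.

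More importantly, the argument in that final paragraph would not work even if the stronger containment were required. If $p$ lies in the relative interior of the facet $F$ and $q\in\mr{int}(C)$, the line $l(p,q)$ meets the boundary of $C$ on the $p$-side only at $p$ itself: there is no second boundary point $b$ ``on the far side of $p$'' lying in a lower-dimensional wall, so the point to which you want to apply the inductive hypothesis does not exist. More fundamentally, the wall property of $F$ forbids any such transversal argument: if $x\circ y\in F$ then both $x$ and $y$ lie in $F$, so no point of $F$ can ever be obtained from points of $\mr{int}(C)$ via Lemma~\ref{L:3gen} or otherwise. Any proof that $W=F\cap S$ contains the dyadic relative-interior points of $F\cap A$ would have to proceed entirely inside $F$ (from the vertices of $F$ and the structure of $W$ on the proper walls of $F$), not by lines through the interior of $C$. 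As it stands, that portion of your proposal is a step that fails; the remainder coincides with the paper's proof.
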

\begin{proof}
$(\rm a)$
First note that $S$ and $\mr{conv}_{\mbD}(S)$ have the same vertices and the same interior, and by the last remarks of Section $2.3$, the walls of
$\mr{conv}_{\mbD}(S)$ are determined by the vertices they contain. If $W'$ is a wall of $\mr{conv}_{\mbD}(S)$, then $W' \cap S$ is a wall of $S$. On the other hand,
if $W$ is a wall of $S$, then $W = S \cap F$, where $F$ is the wall of $\mr{conv}_{\mbD}(S)$ determined by the vertices of $W$.

$(\rm b)$ Let $A := \mr{aff}_{\mbD}(W)$ be the affine $\mbD$-hull of $W$.  Note that, by \eqref{E:Aconv}, $\mr{conv}_{A}(W) = \mr{conv}_{\mbD}(W) \cap A$. Hence $\mr{int}(\mr{conv}_{A}(W)) = \mr{int}(W)$, and obviously $\mr{conv}_{A}(W)$ and $W$ have the same vertices.
\end{proof}

\begin{definition}
Consider an $n$-dimensional subspace $A$ of $\mbD^n$.
\begin{enumerate}
\item[(a)]
A \emph{dyadic $n$-dimensional polytope in $A$} or \emph{$A$-polytope} is the intersection with the space $A$ of an $n$-dimensional real polytope whose vertices lie in the space $A$.
\item[(b)]
A subgroupoid $D$ of a dyadic polytope $C$ in $A$ is called a \emph{semipolytope in $A$} or an \emph{$A$-semipolytope} if $C$ and $D$ have the same vertices and the same (relative) interiors. Thus a semipolytope is a semipolytope in $\mbD^n$.
\end{enumerate}
\end{definition}

In what follows, we are mostly interested in finitely generated algebraic dyadic convex sets. Note that such sets are finite-dimensional, since their affine $\mb{D}$-hulls are finitely generated.

\begin{lemma}\label{L:fingenalg}
Let $C$ be a finitely generated algebraic dyadic convex set with the $n$-dimensional affine $\mbD$-hull $A$, a subspace of $\mbD^n$.
Then the following hold.
\begin{itemize}
\item[(a)] The walls of $C$ are the intersections with $C$ of the walls of $\mr{conv}_{A}(C)$.
\item[(b)] The union of the maximal walls of $C$ generates a subgroupoid of the groupoid $C$.
\item[(c)] The equality $\rm{int}(C) = \rm{int}(\mr{conv}_{A}(C))$ holds.
\item[(d)] The dyadic convex sets $C$, $\mr{conv}_{A}(C)$, $\mr{conv}_{\mbD}(C)$ and the real convex set $\mr{conv}_{\mbR}(C)$ share the same vertices.
\item[(e)] The convex $A$-hull $\mr{conv}_{A}(C)$ of $C$ is an $A$-polytope, and hence its walls are also $A$-polytopes.
\item[(f)] The dyadic convex set $C$ is a semipolytope in $A$.
\end{itemize}
\end{lemma}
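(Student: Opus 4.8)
The plan is to prove the six items essentially in the listed order, using the results already established for dyadic polytopes (Propositions~\ref{P:drwalls} and~\ref{P:rdwalls}, Corollary~\ref{C:wallsdp}) together with the general remarks at the end of Section~\ref{S:2.3} on algebraic dyadic convex sets whose convex $\mbD$-hull is a polytope. The first thing I would do is show that $\mr{conv}_A(C)$ is an $A$-polytope, because everything else hinges on that. Since $C$ is finitely generated, say by $x_1,\dots,x_m \in A$, the real convex hull $\mr{conv}_{\mbR}(C)$ is the real polytope spanned by $x_1,\dots,x_m$; as these points lie in $A$, its vertices (a subset of $\{x_1,\dots,x_m\}$) lie in $A$, so by definition $\mr{conv}_A(C) = \mr{conv}_{\mbR}(C) \cap A$ is an $A$-polytope. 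This gives the first half of (e); the second half (walls of an $A$-polytope are $A$-polytopes) follows from Corollary~\ref{C:wallsdp} applied in the space $A \cong \mbD^n$.

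Next I would treat (d). By the preceding paragraph the vertices of $\mr{conv}_{\mbR}(C)$ form a finite subset of the generating set, hence lie in $C$; and $C \subseteq \mr{conv}_A(C) = \mr{conv}_{\mbR}(C) \cap A \subseteq \mr{conv}_{\mbD}(C) \subseteq \mr{conv}_{\mbR}(C)$, so all four sets have exactly the same set of extreme points, proving (d). Item (c) is then a topological statement: the relative interior of $\mr{conv}_{\mbR}(C)$ is contained in the real convex hull of $C$ (standard fact, since interior points are convex combinations of the vertices with all weights positive, hence limits — indeed dyadic approximations — of points of $C$, and actually lie in $\mbD^n$ when the combination has dyadic weights), and intersecting with $A$ gives $\mr{int}(\mr{conv}_A(C)) \subseteq C$; combined with $C \subseteq \mr{conv}_A(C)$ this forces $\mr{int}(C) = \mr{int}(\mr{conv}_A(C))$. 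For (a), I would invoke the extension of Corollary~\ref{C:wallsdp} already stated in the text: since $\mr{conv}_A(C)$ is a polytope in $A = \mr{aff}_{\mbD}(C)$, each wall $W$ of $C$ is the intersection with $C$ of the wall of $\mr{conv}_A(C)$ generated by $W$, and conversely intersecting any wall of $\mr{conv}_A(C)$ with $C$ yields a wall of $C$ by the same argument as in Proposition~\ref{P:drwalls}.

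Item (f) is now immediate: by (e) $\mr{conv}_A(C)$ is an $A$-polytope, by (d) it has the same vertices as $C$, and by (c) it has the same interior, so $C$ is by definition a semipolytope in $A$. The one item needing a genuinely separate argument is (b): I would show the union $\bigcup W_i$ of the maximal (facet) walls of $C$ is closed under $\circ$. Take $a, b$ in this union, say $a \in W_i$ and $b \in W_j$. If $i = j$ then $a \circ b \in W_i$ since $W_i$ is a subgroupoid. If $i \neq j$, then $a \circ b$ lies on the segment $[a,b]$; since $a$ and $b$ lie on the boundary of $\mr{conv}_A(C)$ and $a \circ b$ is a proper convex combination, $a \circ b$ is either interior to $\mr{conv}_A(C)$ or lies on the boundary. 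The point is that $\bigcup W_i$, together with $\mr{int}(C)$, is all of $C$ — but that is exactly the wrong direction. The honest fix is to recall that the boundary of a polytope (hence of $\mr{conv}_A(C)$, hence via (a) and (c) the boundary of $C$) is the union of its facets and is itself a subalgebra: no proper convex combination of two boundary points can be interior unless the whole segment is interior, which cannot happen since a supporting hyperplane through $a$ keeps $[a,b]$ on one side. So $a \circ b$ lies on the boundary, hence (by (a), applied to the facets) in some $W_k$. I expect this closure argument for (b) — pinning down precisely why $a \circ b$ cannot escape into the interior — to be the only subtle point; the rest is bookkeeping with the already-established polytope machinery transported into the subspace $A$.
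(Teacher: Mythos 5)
Your handling of (a), (d), (e) and (f) is sound and follows the same route as the paper: since $C$ is finitely generated, $\mr{conv}_{\mbR}(C)$ is the real polytope spanned by the generators, so its vertices lie in $C$ and in $A$, which gives (e) and (d); (a) comes from the wall correspondence set up at the end of Section~\ref{S:2.3}; and (f) is then a matter of unwinding the definition. Two of your steps, however, contain genuine errors.

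For (b) you prove the wrong statement, and the argument you give for it is false. You claim the union of the maximal walls (i.e.\ the boundary) is closed under $\circ$ because ``no proper convex combination of two boundary points can be interior,'' citing a supporting hyperplane through $a$. A supporting hyperplane only keeps $[a,b]$ on one \emph{closed} side; it does not keep the open segment out of the interior. In the triangle with vertices $(0,0)$, $(1,0)$, $(0,1)$ the boundary points $a=(1/2,0)$ and $b=(0,1/2)$ satisfy $a\circ b=(1/4,1/4)$, an interior point; the boundary of a polytope is not a subalgebra. Item (b) as stated only asserts that the union of the maximal walls \emph{generates} a subgroupoid, which holds for any subset of any groupoid --- that is why the paper dismisses it as obvious.

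For (c), recall that the paper \emph{defines} $\mr{int}(C)$ as $\mr{relint}(\mr{conv}_{\mbR}(C))\cap\mr{aff}_{\mbD}(C)$. Since $\mr{conv}_{\mbR}(\mr{conv}_A(C))=\mr{conv}_{\mbR}(C)$ and $\mr{aff}_{\mbD}(\mr{conv}_A(C))=A=\mr{aff}_{\mbD}(C)$, the equality in (c) is immediate from $A=\mr{aff}_{\mbD}(C)$, and this is all the paper uses. You instead try to establish the stronger containment $\mr{int}(\mr{conv}_A(C))\subseteq C$, and your justification does not hold up: being a limit or ``dyadic approximation'' of points of $C$ does not place a point in the subgroupoid $C$, and an interior dyadic point need not be a dyadic-weight convex combination of the vertices (in Example~\ref{Ex:notdpol} the interior point $(1,1/2)$ requires weight $1/6$ on the vertex $(1,3)$, yet lies in $G$). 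That containment is true but is not what (c) asserts under the paper's definitions, and proving it requires the machinery of Lemma~\ref{L:Slines} and Proposition~\ref{P:spolfing} rather than a one-line approximation argument.
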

\begin{proof}
$(\rm a)$ and $(\rm b)$ are obvious, while $(\mr c)$ follows directly from the fact that $A = \rm{aff}_{\mbD}(C)$.

$(\rm d)$ Since $C$ is finitely generated, it follows that the barycentric algebra $\mr{conv}_{\mbR}(C)$ is also finitely generated. Hence it is a real polytope. Its vertices belong to $C$, to $\mr{conv}_{A}(C)$, and to $\mr{conv}_{\mbD}(C)$.

$(\rm e)$ Since $\mr{conv}_{\mbR}(C)$ is a real polytope, it follows that $\mr{conv}_{\mbD}(C) = \mr{conv}_{\mbR}(C) \cap \mbD^n$ is a dyadic polytope. Then, by~\eqref{E:Aconv} and $(\rm d)$, $\mr{conv}_{A}(C)$ is a polytope in $A$.

$(\rm f)$ This follows directly by $(\rm c), (\rm d)$ and $(\rm e)$.
\end{proof}

\begin{proposition}\label{P:fingencon}
Each finitely generated algebraic dyadic convex set is isomorphic to a semipolytope.
\end{proposition}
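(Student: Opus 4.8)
The plan is to observe that, by Lemma~\ref{L:fingenalg}, such a $C$ is already a semipolytope \emph{in its affine $\mbD$-hull}, and then to transport this picture into $\mbD^n$ by a suitably chosen isomorphism. First I would normalise the setting: since $C$ is a finitely generated member of $\mathsf{Q}(\mbD)$, up to isomorphism it is an algebraic dyadic convex subset of $\mbD^m$ for some $m$. Put $A := \mr{aff}_{\mbD}(C)$, an $n$-dimensional subspace of $\mbD^m$, where $n = \dim(C)$; if $n = 0$ then $C$ is a point, hence trivially a semipolytope, so assume $n \geq 1$. By Lemma~\ref{L:fingenalg}(f), $C$ is a semipolytope in $A$: writing $E := \mr{conv}_{A}(C)$, the set $E$ is an $A$-polytope, $C$ is a subgroupoid of $E$, and $C$ and $E$ share their vertices and their relative interior. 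So it suffices to produce an affine $\mbD$-space isomorphism $\varphi \colon A \to \mbD^n$ under which the $A$-polytope $E$ becomes a genuine dyadic polytope in $\mbD^n$ and vertices and relative interiors are preserved; for then $\varphi(C) \subseteq \varphi(E)$ is a semipolytope in $\mbD^n$ and $C \cong \varphi(C)$.

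The key step is the construction of $\varphi$. The affine $\mbD$-space $A = \mr{aff}_{\mbD}(C)$ is generated, as an affine $\mbD$-space, by any finite generating set of the groupoid $C$, so $A$ is a finitely generated member of the quasivariety $\mathsf{Q_A}(\mbD)$ (a quasivariety being closed under subalgebras). Being $n$-dimensional, $A$ is therefore isomorphic to the free affine $\mbD$-space $\mbD^n$ on $n+1$ generators; fix a free basis $a_0, \dots, a_n$ of $A$, and let $\varphi \colon A \to \mbD^n$ be the isomorphism extending $a_i \mapsto e_i$, where $e_0,\dots,e_n$ is the standard frame~\eqref{E:freegens}. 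What makes $\varphi$ useful geometrically is that it is the restriction of a real affine isomorphism: the points $a_0, \dots, a_n$ are affinely independent in $\mbR^m$ and span $\mr{aff}_{\mbR}(A)$ over $\mbR$, so the affine map $\mr{aff}_{\mbR}(A) \to \mbR^n$ sending $a_i$ to $e_i$ is a well-defined bijection $\overline{\varphi}$ extending $\varphi$, and in particular $\overline{\varphi}(A) = \varphi(A) = \mbD^n$.

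It then remains to transport $E$ and $C$. Since $E$ is an $A$-polytope, $E = \mr{conv}_{\mbR}(E) \cap A$, where $\mr{conv}_{\mbR}(E)$ is a real $n$-dimensional polytope whose vertex set $V$ lies in $A$. Applying the real affine bijection $\overline{\varphi}$ gives
\[
\varphi(E) \;=\; \overline{\varphi}\big(\mr{conv}_{\mbR}(E)\big) \cap \overline{\varphi}(A) \;=\; \mr{conv}_{\mbR}\big(\varphi(V)\big) \cap \mbD^n ,
\]
which is a genuine $n$-dimensional dyadic polytope in $\mbD^n$ with dyadic vertices $\varphi(V)$. Now $\varphi(C)$ is a subgroupoid of $\varphi(E)$; since $\overline{\varphi}$ is a real affine bijection it carries vertices to vertices and relative interiors of real convex sets to relative interiors, and $\mr{aff}_{\mbD}(\varphi(C)) = \varphi(A) = \mbD^n$, so $\varphi(C)$ and $\varphi(E)$ have the same vertices and the same interior. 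Hence $\varphi(C)$ is a semipolytope and $C \cong \varphi(C)$. The step I expect to be the main obstacle is exactly the middle one: one must know not merely that the $n$-dimensional subspace $A$ is abstractly isomorphic to $\mbD^n$, but that the isomorphism can be realised as the restriction of a real affine isomorphism, so that the geometric content of ``$A$-polytope'' — its vertices, its interior, and the very fact of being a polytope — is transferred faithfully to $\mbD^n$; passing through a free affine frame of $A$ is precisely what secures this.
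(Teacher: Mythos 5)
Your proposal is correct and follows essentially the same route as the paper: reduce to the affine $\mbD$-hull $A$ via Lemma~\ref{L:fingenalg}, pick a free affine frame of $A$ (the paper takes the vertices of a simplex inside $C$ supplied by Lemma~\ref{L:subsimpl}), extend the resulting isomorphism $A \to \mbD^n$ to a real affine isomorphism, and check that the image of $\mr{conv}_A(C)$ is a genuine dyadic polytope. The only differences are cosmetic — the paper verifies the reverse inclusion in~\eqref{E:dpol} by an explicit coordinate computation where you invoke bijectivity of the real extension.
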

\begin{proof}
Let $C$ be an algebraic convex subset of $\mbD^p$ generated by a finite set $X = \{x_0, x_1, \cdots, x_r\}$. Let $A = \mr{aff}_{\mbD}(C)$ with $\dim A = n \leq p$ and $n \leq r$.

By Lemma~\ref{L:fingenalg}, $C$ is a semipolytope in $A$.
Moreover, $C$, $\mr{conv}_{A}(C)$, $\mr{conv}_{\mbD}(C)$ and $\mr{conv}_{\mbR}(C)$ have the same set $V$ of vertices contained in $X$. Moreover, $\rm{int}(C) = \rm{int}(\mr{conv}_{A}(C))$.

By Lemma~\ref{L:subsimpl}, $C$ contains an $n$-dimensional algebraic simplex  $\mbS$ isomorphic as a groupoid to the (geometric) dyadic simplex $\mbS_n$. The simplex $\mbS$ is also contained in $A = \rm{aff}_{\mbD}(C)$, and its vertices $s_0, \dots, s_n$ freely generate the affine $\mbD$-space $A$, which is a subspace of $\mbD^n$ (isomorphic to $\mbD^n$) and the affine $\mbR$-space $\mbR^n$.

On the other hand, the space $\mbD^n$ contains the dyadic simplex $\mbS_n$. Its vertices $e_0, \dots, e_n$ freely generate the groupoid $\mbS_n$, the affine $\mbD$-space $\mbD^n$, and the affine $\mbR$-space $\mbR^n$.

The bijective mapping $\iota \colon s_i \mapsto e_i$ assigning the vertices of $\mbS$ to the vertices
of $\mbS_n$ extends uniquely to the authomorphism $\ov{\ov{\iota}}$ of the affine $\mbR$-space $\mbR^n$, restricting to the isomorphism $\ov{\iota}$ between the affine $\mbD$-spaces $A$ and $\mbD^n$. The authomorphism $\ov{\ov{\iota}}$ restricts to the groupoid isomorphism between $\mr{conv}_{A}(C)$ and its image $\ov{\iota}(\mr{conv}_{A}(C)) = \ov{\ov{\iota}}(\mr{conv}_{A}(C))$ in $\mbD^n$.

Our aim is to prove that $\ov{\iota}(C)$ is a semipolytope, which implies that $C$ is isomorphic to a semipolytope.

First note that $\ov{\iota}(C)$ is a subgroupoid of $\ov{\iota}(\mr{conv}_{A}(C))$.
Since $\ov{\iota}$ is an isomorphism, it follows by Lemma~\ref{L:fingenalg}, (d) and (c), that
$\ov{\iota}(C)$ and $\ov{\iota}(\mr{conv}_A(C))$ have the same vertices and the same (relative) interiors.
It remains to show that $\ov{\iota}(\mr{conv}_{A}(C))$ is a dyadic polytope. By~\eqref{E:pol}, this means that
\begin{equation}\label{E:dpol}
\ov{\iota}\big (\mr{conv}_A(C)\big ) = \mr{conv}_{\mbR}\big (\ov{\iota}(\mr{conv}_A(C)) \big ) \cap \mbD^n.
\end{equation}
The inclusion $\ov{\iota}\big (\mr{conv}_A(C)\big ) \subseteq \mr{conv}_{\mbR}\big (\ov{\iota}(\mr{conv}_A(C)) \big ) \cap \mbD^n$ follows by~\eqref{E:Aconv}. To show that the reverse inclusion holds as well,
first observe that
\begin{equation}\label{E:convconv}
\mr{conv}_{\mbR}\big (\ov{\iota}(\mr{conv}_A(C)) \big ) = \ov{\ov{\iota}} \big (\mr{conv}_{\mbR}(C) \big ).
\end{equation}
Now let $x \in
\mr{conv}_{\mbR}\big (\ov{\iota}(\mr{conv}_A(C)) \big ) \cap \mbD^n = \ov{\ov{\iota}} \big (\mr{conv}_{\mbR}(C) \big ) \cap \mbD^n$.
Then $x = \sum a_i e_i$ for some $a_i\in \mbD$. Hence $\ov{\iota}^{-1}(x) = \sum a_i s_i \in A$ and $\ov{\iota}^{-1}(x) \in \mr{conv}_{\mbR}(C)$. Therefore
$\ov{\iota}^{-1}(x) \in \mr{conv}_{A}(C)$, and consequently, $x \in \ov{\iota}(\mr{conv}_A(C))$. Hence $\ov{\iota}(\mr{conv}_A(C))$ is a dyadic polytope.
\end{proof}

Our aim is to prove that all semipolytopes are finitely generated.
First we will need some additional information concerning real convex polytopes.
Let $C$ be an $(n+1)$-dimensional convex polytope in the affine $\mbR$-space $\mbR^{n+1}$ with vertex set $V = \{x_0, \dots, x_k\}$, where $k > n$,  and with the set $\mr{W}$ of $n$-dimensional walls $W_1, \dots, W_r$. Choose one interior point $y_i$ from each wall $W_i$, and call it an \emph{anchor} on $W_i$. Let $Y$ be the set $\{y_1, \dots, y_r\}$ of all anchors. The anchors $y_i$ generate the subpolytope $P = \rm{conv}_{\mbR}(Y)$ of $C$, which will be called an \emph{inner polytope} of $C$.
(For an example, see Figure~\ref{F:4}.)
Note that an inner polytope has the same dimension as the polytope $C$.

\begin{figure}[bht]
	\begin{center}
		\begin{picture}(170,160)(0,0)

		\put(20,20){\line(1,0){130}}
		\put(20,20){\line(-1,5){15}}
		\put(5,95){\line(1,1){60}}
		\put(65,155){\line(2,-1){146}}
		\put(150,20){\line(1,1){62}}
			\put(20,20){\circle*{5}}
			\put(5,20){$x_0$}
			\put(150,20){\circle*{5}}
		\put(155,20){$x_1$}
			\put(5,95){\circle*{5}}
		\put(-10,100){$x_4$}
			\put(65,155){\circle*{5}}
		\put(65,160){$x_3$}
			\put(212,82){\circle*{5}}
		\put(208,88){$x_2$}
				\put(50,20){\circle*{5}}
		\put(50,10){$y_1$}
				\put(50,20){\line(5,1){123}}
					\put(174,44){\circle*{5}}
		\put(180,40){$y_2$}
					\put(174,44){\line(0,1){55}}
		\put(174,100){\circle*{5}}
		\put(180,105){$y_3$}
				\put(33,123){\line(6,-1){145}}
		\put(33,123){\circle*{5}}
		\put(26,130){$y_4$}
		\put(33,123){\line(-1,-3){22}}
			\put(12,60){\circle*{5}}
		\put(-3,60){$y_5$}
		\put(10,60){\line(1,-1){40}}

		\put(150,20){\circle*{5}}
		\put(155,20){$x_1$}
		\put(5,95){\circle*{5}}
		\put(-10,100){$x_4$}
		\put(65,155){\circle*{5}}
		\put(65,160){$x_3$}
		\put(212,82){\circle*{5}}
		\put(208,88){$x_2$}

	\end{picture}
	\end{center}
	\caption{}
	\label{F:4}
\end{figure}

If $S$ is an $n$-dimensional semipolytope in an $m$-dimensional dyadic space with $n \leq m$, then without loss of generality, we will consider $S$ as a semipolytope in the $n$-dimensional dyadic space that forms the dyadic affine hull of $S$.

Let $S$ be a (nontrivial) $n$-dimensional semipolytope in the space $\mbD^n$. It is a subgroupoid of its convex $\mbD$-hull $S_d := \mr{conv}_{\mbD}(S)$, which is in turn a subgroupoid of its convex $\mbR$-hull $S_r = \mr{conv}_{\mbR}(S)$. Recall that $S$, $S_d$ and $S_r$ have the same vertices. Choose a set of anchors of the maximal walls of the (real) polytope $S_r$, belonging to the semipolytope $S$. The anchors generate the inner polytope $P_r$ of $S_r$. The intersection $P := P_r \cap S$ is a (dyadic) subpolytope of $S$, and is called an \emph{inner polytope} of $S$.
By Proposition~\ref{P:wallsepol}, the walls of $S$ are the intersections of the walls of $S_r$ with $S$, and each one is a semipolytope in its affine $\mbD$-hull.

Now it remains to show that all semipolytopes are finitely generated.
We start with an extension of Lemma~\ref{L:lines}.

\begin{lemma}\label{L:Slines}
Let $S$ be an $n$-dimensional semipolytope, and let $P$ be an inner polytope of $S$. Then for any interior point $A$ of $S$ not contained in $P$,
there exists a point $B$ in a maximal wall of $S$ and a point $C$ in $\rm{int}(P)$ such that $A$ belongs to the intersection $l(B,C) \cap S$ of the (real) line $l(B,C)$ through $B$ and $C$ and $S$.
\end{lemma}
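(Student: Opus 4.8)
The plan is to run the argument of Lemma~\ref{L:lines} almost verbatim, with the inner polytope $P$ in the role of the simplex $\mbS$, and with one extra precaution needed because a semipolytope, unlike a dyadic polytope, need not be geometric. View $S$ inside its affine $\mbD$-hull $\mbD^n$, and put $S_r=\mr{conv}_{\mbR}(S)$ (a real polytope, since $\mr{conv}_{\mbD}(S)$ is a dyadic polytope) and $P_r=\mr{conv}_{\mbR}(P)$; because the chosen anchors lie in $S$ and $P=P_r\cap S$, the polytope $P_r$ is the real inner polytope of $S_r$ generated by those anchors, and $\mr{int}(P_r)\subseteq\mr{int}(S_r)$ since each anchor lies in the relative interior of a distinct facet of $S_r$, so no supporting hyperplane of $S_r$ contains all of them. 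Also $\mr{int}(S)=\mr{int}(S_r)\cap\mbD^n$ and $\mr{int}(P)=\mr{int}(P_r)\cap\mbD^n$. By hypothesis $A\in\mr{int}(S_r)$ while $A\notin P_r$.

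First I would choose a dyadic point $C_1\in\mr{int}(P)$ such that the ray from $C_1$ through $A$ leaves $S_r$ through the \emph{relative interior} of some maximal wall (facet) $W'$ of $S_r$, say at the point $R_1$. This is possible because, as $C_1$ varies over $\mr{int}(P_r)$, the direction $A-C_1$ avoids the lower-dimensional set of directions from $A$ that point to a face of $S_r$ of dimension $<n-1$, and dyadic points are dense in $\mr{int}(P_r)$ (the vertices of $P_r$ being dyadic). Exactly as in Lemma~\ref{L:lines}, the hyperplane $H=\mr{aff}_{\mbR}(W')$ is cut out by an equation with dyadic coefficients, so $R_1=l(A,C_1)\cap H$ has rational coordinates; clearing denominators along the line through a dyadic vertex $W_1\neq R_1$ of $W'$ produces a second dyadic point $D$ on $l(W_1,R_1)=l(W_1,D)\subseteq H$.

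Then I would pick $R_2$ on $l(W_1,D)$, $R_2\neq R_1$, so close to $R_1$ that the segment $[R_1,R_2]$ lies both inside $\mr{int}(W')$ and inside the open double cone $\{X:\ l(A,X)\cap\mr{int}(P_r)\neq\emptyset\}$; both are neighbourhoods of $R_1$, the latter because $l(A,R_1)$ passes through $C_1\in\mr{int}(P_r)$. By the Density Lemma~\ref{L:points} there is a dyadic point $B\in[R_1,R_2]$, and then $B\in\mr{int}(W')\cap\mbD^n=\mr{int}(W'\cap\mbD^n)=\mr{int}(W'\cap S)$: indeed $W'\cap S$ is a maximal wall of $S$ which, by Proposition~\ref{P:wallsepol}, is a semipolytope in its affine $\mbD$-hull and hence has the same relative interior as its convex $\mbD$-hull $W'\cap\mbD^n$. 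So $B$ lies in a maximal wall of $S$. Finally $l(A,B)\cap\mr{int}(P_r)\neq\emptyset$; choosing two distinct real points $R_3,R_4$ in it and applying the Density Lemma once more on the line $l(A,B)$ through the dyadic points $A$ and $B$ gives a dyadic point $C$ between them, so $C\in\mr{int}(P_r)\cap\mbD^n=\mr{int}(P)$. Since $C\in l(A,B)$ and $C\neq B$, we have $l(B,C)=l(A,B)\ni A$ with $A\in S$, i.e. $A\in l(B,C)\cap S$, which is the assertion.

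The one genuinely new difficulty, absent from Lemma~\ref{L:lines}, is that a dyadic point landing on the \emph{real} facet $W'$ of $S_r$ need not belong to the semipolytope $S$ (only to $\mr{conv}_{\mbD}(S)$). This is precisely why $B$ must be driven into the relative interior of $W'$ — there the dyadic points of $W'$, of $W'\cap\mbD^n$, and of the wall $W'\cap S$ all agree, by Proposition~\ref{P:wallsepol}(b) — and it is the only place where the genericity of $C_1$ and the smallness of $[R_1,R_2]$ are really used. The rest is the routine transcription of the proof of Lemma~\ref{L:lines}.
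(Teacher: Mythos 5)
Your overall strategy is the paper's: rerun Lemma~\ref{L:lines} with the inner polytope $P$ in place of the simplex $\mbS$, and you correctly isolate the one genuinely new difficulty, namely that the dyadic point $B$ produced on the real facet $W'$ must actually lie in the corresponding wall of $S$. However, your resolution of that difficulty rests on a false equality. You claim $\mr{int}(W')\cap\mbD^n=\mr{int}(W'\cap S)$ on the grounds that, by Proposition~\ref{P:wallsepol}(b), the wall $W'\cap S$ is a semipolytope in its affine $\mbD$-hull and hence has the same relative interior as ``its convex $\mbD$-hull $W'\cap\mbD^n$''. But the hull relevant to Proposition~\ref{P:wallsepol}(b) is the convex $A$-hull $W'\cap A$ with $A=\mr{aff}_{\mbD}(W'\cap S)$, and $A$ may be a \emph{proper} subspace of $\mr{aff}_{\mbR}(W')\cap\mbD^n$; all the proposition gives is $\mr{int}(W'\cap S)=\mr{int}(W')\cap A$. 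Example~\ref{Ex:notdpol} shows this is a real obstruction: there the bottom wall of the semipolytope $G$ has affine $\mbD$-hull $3\mbD\times\{0\}$, and the dyadic point $(1,0)$ lies in the relative interior of the real facet joining $(0,0)$ to $(3,0)$ but does not belong to $G$. So your $B$, obtained from the Density Lemma as an arbitrary dyadic point of $\mbD^n$ in $\mr{int}(W')$, need not belong to $S$ at all --- and then it is not generated by the generators of that wall, which is precisely how the lemma is used in Proposition~\ref{P:spolfing}.

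The repair --- and it is exactly the one step the paper's proof adds to Lemma~\ref{L:lines} --- is to choose $B$ between $R_1$ and $R_2$ inside $\mr{aff}_{\mbD}(W)$ rather than merely inside $\mbD^n$: the set $l(W_1,R_1)\cap\mr{aff}_{\mbD}(W)$ is a one-dimensional affine $\mbD$-subspace (the rationality of $R_1$ guarantees a second point of $\mr{aff}_{\mbD}(W)$ on that line besides the vertex $W_1$), hence isomorphic to the affine $\mbD$-space $\mbD$, and transporting the Density Lemma through that isomorphism yields a point of $\mr{aff}_{\mbD}(W)$ strictly between $R_1$ and $R_2$. Combined with your (correct and useful) observation that $B$ can be forced into $\mr{int}(W')$, this places $B$ in $\mr{int}(W')\cap\mr{aff}_{\mbD}(W)=\mr{int}(W'\cap S)\subseteq S$. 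The remainder of your argument --- the choice of $C_1$, of $R_2$, and of $C$ in $\mr{int}(P)$ --- is sound.
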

\begin{proof}
The proof is very similar to the proof of Lemma~\ref{L:lines}, with the polytope $P$ replaced by the semipolytope $S$, and the simplex $\mbS$ replaced by the inner polytope $P$ of $S$. The only difference concerns the location of the point $B$, which can be obtained as follows. Let $k(W_1,R_1) = l(W_1,R_1) \cap \rm{aff}_{\mbD}(W)$. Let $\iota \colon k(W_1,R_1) \rightarrow \rm{aff}_{\mbD}(W)$ be an isomorphism of affine $\mbD$-spaces and let $\ov{\iota} \colon l(W_1,R_1) \rightarrow \mbR$ be its extension to the affine $\mbR$-space isomomorhism. By the Density Lemma~\ref{L:lines}, there is a point $B'$ of $\rm{aff}_{\mbD}(W)$ located between $\ov{\iota}(R_1)$ and $\ov{\iota}(R_2)$. Then $B = \iota^{-1}(B')$ is a point of
$\rm{aff}_{\mbD}(W)$ located between $R_1$ and $R_2$.
\end{proof}

\begin{proposition}\label{P:spolfing}
Each semipolytope is finitely generated.
\end{proposition}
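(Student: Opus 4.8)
The plan is to prove Proposition~\ref{P:spolfing} by induction on the dimension $n$ of the semipolytope $S$, in close parallel with the proof of Theorem~\ref{P:fingen}: the inner polytope $P$ of $S$ will play the role that a geometric simplex $\mbS$ played there, and Lemma~\ref{L:Slines} will replace Lemma~\ref{L:lines}. Since a semipolytope contains an $n$-dimensional simplex whose vertices freely generate its affine $\mbD$-hull (Lemma~\ref{L:subsimpl}), that hull is isomorphic to $\mbD^n$, so I may assume without loss of generality that $S$ is a semipolytope in $\mbD^n$ with $\mr{aff}_{\mbD}(S) = \mbD^n$. For the base case $n=1$, a one-dimensional semipolytope has as interior everything but its two vertices, both of which it contains, so it coincides with its convex $\mbD$-hull and is thus a dyadic interval, which is generated by two or three elements by~\cite{MRS11}.

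For the inductive step I assume every semipolytope of dimension less than $n$ is finitely generated. The maximal walls $W_1,\dots,W_r$ of $S$ are finite in number, since by Proposition~\ref{P:wallsepol}(a) they correspond to the facets of the dyadic polytope $\mr{conv}_{\mbD}(S)$; and by Proposition~\ref{P:wallsepol}(b) each $W_i$ is an $(n-1)$-dimensional semipolytope in its own affine $\mbD$-hull, hence finitely generated by the induction hypothesis, say by a finite set $G_i$. Next I fix an inner polytope $P = P_r \cap S$ of $S$, where $P_r = \mr{conv}_{\mbR}(\{y_1,\dots,y_r\})$ and each $y_i$ is an anchor chosen from $\mr{int}(W_i) \subseteq S$; as recorded in the text preceding the statement, $P$ is an $n$-dimensional dyadic polytope, so by Theorem~\ref{P:fingen} it is finitely generated, say by a finite set $G_P$. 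The claim to prove is then that $G := G_1 \cup \dots \cup G_r \cup G_P$ generates $S$.

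So let $A \in S$. Using the decomposition $S = \mr{int}(S) \cup W_1 \cup \dots \cup W_r$ (every proper wall of $S$ is contained in some maximal wall), I may assume $A \in \mr{int}(S)$ and, moreover, $A \notin P$ (otherwise $A$ is already generated by $G_P$). By Lemma~\ref{L:Slines} there are a point $B$ in some maximal wall $W_i$ and a point $C \in \mr{int}(P)$ such that $A$ lies on the segment $l(B,C) \cap S$; without loss of generality $A \in [B,C]$. Exactly as in the proof of Theorem~\ref{P:fingen}, I then use Corollary~\ref{C:subsimpl} to pick a dyadic point $D$ on $l(B,C)$ lying in $P$ and strictly between $A$ and $C$, with $[D,C]$ a one-dimensional dyadic simplex; here $B$ is generated by $G_i$ and $C,D$ by $G_P$. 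Since $B$ lies on the boundary of the real polytope $S_r = \mr{conv}_{\mbR}(S)$ while $C \in \mr{int}(S_r)$, the half-open segment $]B,C]$ lies in $\mr{int}(S_r)$, so every dyadic point of $[B,C]$ lies in $S$. Applying Lemma~\ref{L:3gen} to the points $B, D, C$ (with $[D,C]$ the distinguished simplex) shows that $\{B,D,C\}$ generates the entire dyadic interval $[B,C] \cap \mbD^n \subseteq S$, which contains $A$. Hence $S = \langle G\rangle$ is finitely generated.

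The step requiring the most care is keeping the whole argument inside $S$: because $S$ is only a semipolytope, its boundary need not be geometric (cf. Example~\ref{Ex:notdpol}), so one cannot simply invoke facts about dyadic polytopes on the boundary. This is handled in two places — by Proposition~\ref{P:wallsepol}(b), which ensures the maximal walls are themselves semipolytopes and hence fall under the induction hypothesis; and by the elementary convexity fact that a segment joining a boundary point of a convex set to an interior point meets the boundary only at its endpoint, which is precisely what places the generated interval $[B,C] \cap \mbD^n$ inside $S$ even though $B$ sits on a possibly non-geometric wall. Once those two points are secured, the remainder is a routine transcription of the proof of Theorem~\ref{P:fingen}, with $P$ in place of $\mbS$ and Lemma~\ref{L:Slines} in place of Lemma~\ref{L:lines}.
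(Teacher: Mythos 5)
Your proof is correct and follows essentially the same route as the paper's: induction on dimension, with the maximal walls handled via Proposition~\ref{P:wallsepol} and the induction hypothesis, the inner polytope $P$ handled via Theorem~\ref{P:fingen}, and interior points reached through Lemma~\ref{L:Slines}, Corollary~\ref{C:subsimpl} and Lemma~\ref{L:3gen}. The only difference is that you make explicit a detail the paper leaves implicit, namely why the generated interval $[B,C]$ stays inside $S$ even though $B$ lies on a possibly non-geometric wall.
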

\begin{proof}
Let $S$ be a (nontrivial) $n$-dimensional semipolytope in the space $\mbD^n$.
The proof is by induction on $n$.
If the dimension of $S$ is $1$, then its convex $\mbD$-hull $S_d := \mr{conv}_{\mbD}(S)$
is an interval, and by results of \cite{MRS11}, it is (minimally) generated by $2$ or $3$ elements. In this case, the semipolytope $S$ and the polytope $S_d$ coincide.

Now assume that the result holds for all dimensions less or equal to $n$. Consider a semipolytope $S$ of dimension $n+1$ with vertices $\{x_0, x_1, \dots, x_k\}$, where $k > n$, but minimally generated by a possibly bigger set of generators containing the set of all vertices. The walls of $S$ are intersections of the walls of the polytope $S_d$ with the semipolytope $S$, and $S$ has a finite number of walls. By Proposition~\ref{P:wallsepol}, all proper nontrivial walls of $S$ (each of dimension not greater than $n$) are semipolytopes in their affine $\mbD$-hulls. By the induction hypothesis, they are finitely generated.

Now choose (dyadic) anchors $y_1, \dots, y_r$ of the maximal walls\\ $W_1, \dots, W_r$ of $S$, and consider the corresponding inner polytope $P$ of $S$.
Recall that, by the induction hypothesis, all maximal proper walls are finitely generated, and by Theorem~\ref{P:fingen}, $P$ is also finitely generated.
We will show that each point of $S$, not contained in a maximal proper wall $W_i$ or in the inner polytope $P$, is generated by an element of a maximal proper wall and two elements of the inner polytope $P$.
The proof is similar to the last part of the proof of Theorem~\ref{P:fingen}.
Let $A$ be an interior point of $S$ not contained in the inner polytope $P$.
By Lemma~\ref{L:Slines}, there is a point $B$ in a maximal wall $W_i$ of $S$ and a point $C$ in $\rm{int}(P)$ such that $A$ belongs to the intersection $l(B,C) \cap S$ of the (real) line $l(B,C)$ through $B$ and $C$ and $S$. Without loss of generality assume that $A$ belongs to the interval $[B,C]$.
Then, by Corollary~\ref{C:subsimpl}, the intersection $P \cap l(B,C)$ contains a point $D$, with $D$ between $A$ and $C$, such that the interval $[D,C]$ is a one-dimensional (geometric) simplex. By Lemma~\ref{L:3gen}, the points $B, C$, and $D$ generate the whole dyadic interval $[B,C]$ of the line $l(B,C)$ containing the point $A$. Recall that the points $C$ and $D$ are generated by the generators of $P$, and the point $B$ is generated by the generators of the wall $W_{i}$. It follows that the semipolytope $S$ is generated by the finite number of generators of the finite number of maximal proper walls $W_i$ and the finite number of generators of the polytope $P$.
\end{proof}

\section{Main result}

The results obtained in the previous sections may be summarized by the following theorem.

\begin{theorem}\label{T:main}
A subgroupoid $(D, \circ)$ of the groupoid $(\mbD^n, \circ)$ is finitely generated precisely if it is isomorphic to a semipolytope.
\end{theorem}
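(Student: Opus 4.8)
The plan is to assemble the theorem from the two preceding propositions, since each implication has essentially been established.

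For the forward direction I would first observe that any subgroupoid $(D,\circ)$ of $(\mbD^n,\circ)$ is, by definition, an algebraic dyadic convex subset of $\mbD^n$: the affine $\mbD$-space $\mbD^n$ is the free affine $\mbD$-space on $n+1$ generators and hence lies in the quasivariety $\mathsf{Q_A}(\mbD)$, so any subgroupoid of its reduct $(\mbD^n,\circ)$ is an algebraic dyadic convex set. If moreover $D$ is finitely generated, then its affine $\mbD$-hull is finitely generated, hence finite-dimensional, so $D$ is a finite-dimensional algebraic dyadic convex set; Proposition~\ref{P:fingencon} then applies directly and gives that $D$ is isomorphic to a semipolytope. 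For the converse, Proposition~\ref{P:spolfing} says every semipolytope is finitely generated, and finite generation is preserved by isomorphism, so any $(D,\circ)$ isomorphic to a semipolytope is finitely generated. I would also note that the statement is not circular: a semipolytope is by definition a subgroupoid of a dyadic polytope, which is the intersection of a real polytope with some $\mbD^m$, so semipolytopes genuinely live in dyadic affine spaces.

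I do not expect a real obstacle at this stage: the substance is in Propositions~\ref{P:fingencon} and~\ref{P:spolfing}, and through them in the wall analysis (Lemma~\ref{L:fingenalg}, Proposition~\ref{P:wallsepol}), the embedding of free simplices (Lemma~\ref{L:subsimpl}, Corollary~\ref{C:subsimpl}), the line arguments (Lemmas~\ref{L:lines}, \ref{L:Slines}), the three-point Lemma~\ref{L:3gen}, and Theorem~\ref{P:fingen}. The only thing needing care is the bookkeeping identification that a subgroupoid of $(\mbD^n,\circ)$ is literally a finite-dimensional algebraic dyadic convex set, so that Proposition~\ref{P:fingencon} may be invoked, together with the trivial remark that the semipolytope realizing the isomorphism can be taken inside a standard dyadic space; both follow at once from the definitions.
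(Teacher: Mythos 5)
Your proposal is correct and matches the paper's treatment: the paper states Theorem~\ref{T:main} as a summary of the preceding results, with the forward direction given by Proposition~\ref{P:fingencon} and the converse by Proposition~\ref{P:spolfing}. Your added bookkeeping (that a subgroupoid of $(\mbD^n,\circ)$ is an algebraic dyadic convex set and that finite generation forces finite dimension) is exactly the implicit glue the paper relies on.
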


\begin{remark}\label{R:genspol}
Recall that each $n$-dimensional semipolytope $S$ is the standard sum of an inner polytope $P$ and outer subsemipolytopes $P_i$ determined by $P$.
The proof of Theorem~\ref{T:main} shows that each point of $S$ contained in any of the $P_i$, but not contained in a maximal proper wall or in the inner polytope $P$, is generated by an element of a maximal proper wall of $S$ and two elements of the inner polytope $P$. As the maximal walls and the inner polytope $P$ are all finitely generated, the proof provides a recursive method of finding generators of $S$. To generate the semipolytope $S$, it is enough to take the generators of all the maximal walls, and of the polytope $P$.
\end{remark}

The following example shows that there are geometric dyadic convex sets, with a finite number of vertices, which are not finitely generated.

\begin{example}
Consider the unit disc in the real plane, and its intersection $U$ with the dyadic plane. The boundary of the unit disc is the circle given by the equation $x^2 + y^2 = 1$. Note that the points $(-1,0)$,  $(0,-1)$, $(1,0)$ and $(0,1)$ belong to the circle. For any dyadic point $(j/2^m, k/2^n)$ of the circle, with say $0 < m \leq n$, and odd integers $j$ and $k$, we have
\begin{equation}\label{E:cex}
k^2 = 2^{2(n-m)}(2^{2m} - j^2).
\end{equation}
(The case when $0 < n \leq m$, will be similar.)
Note that the left-hand side of~\eqref{E:cex} is odd. If $m \neq n$, then  the right-hand side of~\eqref{E:cex} is even. If $m = n$, then $j^2 + k^2 = 2^{2n}$. Let $j = 2a + 1$ and $k = 2b + 1$. Then $1/2(j^2 + k^2) = 2(a^2 +a + b^2 + b) + 1 = 2^{2n-1}$. This holds only if $n = 1/2$, again a contradiction. Consequently, the unique dyadic points belonging to the circle are $(-1,0)$,  $(0,-1)$, $(1,0)$ and $(0,1)$. It follows that $U$ consists of the dyadic points of the interior of the real unit disc and these four points. The four points generate the geometric square contained in $U$. But no finite set of elements of $U$ generates the full set of points of $U$ not contained in the square.
\end{example}

\section{Counting the number of generators}

In this concluding section, we will illustrate how to use the main results of the paper to find a minimal set of generators of some semi-polygons, and estimate their number.

In \cite{MRS11}, it was shown that all dyadic polygons (and in particular all intervals and triangles) are finitely generated. However, the methods we used in the proof did not provide an efficient way to find a minimal set of generators, or to calculate their number.
In this section we will show how to use the results of the previous sections to solve such problems. In particular, we will show how to find a minimal set of generators of dyadic triangles and semipolytopes closing to triangles, i.e. semipolytopes with dyadic triangles as their closure. We will call such semipolytopes \emph{semitriangles}.

We already know that finitely generated (algebraic) convex subsets of the dyadic line $\mbD$ are isomorphic to intervals, and that each dyadic interval is minimally generated by $2$ or $3$ elements. As shown in \cite{MRS11},
each nontrivial interval of $\mbD$ is isomorphic to some dyadic interval
$\mbD_k = [0, k]$, where $k$ is an odd positive integer. Two such intervals are isomorphic
precisely when their right hand ends are equal. Any set of generators must contain the vertices. If $k \neq 1$, then to generate $\mbD_k$, one more generator is needed. This can be the number $1$, for example. As semipolytopes in $\mbD$ coincide with polytopes, we do not need to consider them separately.

\subsection{Dyadic triangles and polygons}\label{S:dyadtriangles}

Now let us consider dyadic triangles.
Dyadic triangles were classified in \cite{MRS11}, with some improvements given in \cite{MMR19a}.
First recall the following lemma.

\begin{lemma}\cite[L.~5.2]{MRS11}\label{L:firstquadr}
Each dyadic triangle is isomorphic to a triangle $ABC$ contained in the first quadrant, with one vertex, say $A$, located at the origin, and with the vertices $B$ and $C$ having non-negative integer coordinates. (See Figure~\ref{F:5}.)
\end{lemma}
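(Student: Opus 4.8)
The plan is to reduce this to a normalization statement about the action of the affine group $\mathrm{GA}(2,\mbD)$ on dyadic triangles. Given an arbitrary dyadic triangle $T$ with vertices $A_0, B_0, C_0 \in \mbD^2$, I would first apply the translation by $-A_0$, an element of $\mathrm{GA}(2,\mbD)$ and hence (by the remarks in Section~\ref{S:2.5}) a groupoid automorphism of $(\mbD^2,\circ)$ carrying $T$ to an isomorphic dyadic triangle with one vertex at the origin. So without loss of generality $A = (0,0)$, and we must move the images $B, C$ of $B_0, C_0$ to points with non-negative integer coordinates by a further element of $\mathrm{GL}(2,\mbD)$.

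The key step is the following. Write $B = (b_1, b_2)$ and $C = (c_1,c_2)$ with all four entries in $\mbD$; since $T$ is two-dimensional, $B$ and $C$ are linearly independent over $\mbR$, so the matrix $M_0 = \begin{bmatrix} b_1 & b_2 \\ c_1 & c_2 \end{bmatrix}$ is invertible over $\mbR$. I would choose a common denominator $2^m$ so that $2^m M_0$ has integer entries, and then argue that right multiplication by $(2^m M_0)^{-1}$ — which sends $B \mapsto (1,0)$ and $C \mapsto (0,1)$ — can be corrected to land in $\mathrm{GL}(2,\mbD)$. Concretely: $(2^m M_0)^{-1} = \frac{1}{\det(2^m M_0)}\,\mathrm{adj}(2^m M_0)$, the adjugate has integer entries, and $\det(2^m M_0)$ is a nonzero integer $N$; write $N = 2^s u$ with $u$ odd. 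Then the matrix $\frac{1}{2^s}(2^mM_0)^{-1}$ has dyadic entries (the factor $\frac{1}{u}$ is absorbed because $\frac{1}{u}\cdot$(integer) need not be dyadic — so instead I would use $N\cdot(2^mM_0)^{-1} = \mathrm{adj}(2^mM_0)$, which is genuinely integral, composed with the dyadic scaling that is available). The clean way is: apply the integral matrix $\mathrm{adj}(2^mM_0)$, which sends $B \mapsto (N,0)$ and $C \mapsto (0,N)$, then note $(N,0) = (2^s u, 0)$ and $(0,N) = (0, 2^s u)$ already have non-negative integer coordinates. This realizes the claimed normal form with $\|B\|$-coordinate $2^s u$ and similarly for $C$; if one wants the subsequent further reduction (e.g. pulling off powers of $2$ to identify the isomorphism type), one applies the dyadic diagonal matrix $\mathrm{diag}(2^{-s}, 2^{-s}) \in \mathrm{GL}(2,\mbD)$.

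The main obstacle is keeping the transforming matrices inside $\mathrm{GL}(2,\mbD)$ rather than $\mathrm{GL}(2,\mbQ)$: one cannot simply invert, because inverting introduces an odd integer in the denominator. The resolution, as sketched, is to clear denominators first (landing in $\mathrm{GL}(2,\mbQ)$ with an integer matrix), apply the \emph{adjugate} (always integral, always a legitimate element inducing an endomorphism of $\mbD^2$ that restricts to an isomorphism on the spanned subspace), and only use dyadic scalings $2^{\pm k}$, which are the invertible scalars of $\mbD$. A secondary point to check is that these maps are automorphisms of the \emph{groupoid} $(\mbD^2,\circ)$, not merely of the affine $\mbD$-space — but this is exactly the content of the remarks in Section~\ref{S:2.5}: every element of $\mathrm{GA}(2,\mbD)$ is a groupoid automorphism and carries polytopes to isomorphic polytopes, so the image triangle is genuinely isomorphic to $T$ as a $\mc{CB}$-mode. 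Finally, one observes that the resulting triangle lies in the first quadrant since its three vertices $(0,0)$, $(2^s u, 0)$, $(0, 2^s u)$ (after the optional scaling, $(0,0),(u,0)\cdot$etc.) have non-negative coordinates, and its interior lies in the open first quadrant.
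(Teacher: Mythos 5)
There is a genuine and fatal gap: the matrix $\mathrm{adj}(2^m M_0)$ is not an element of $\mathrm{GL}(2,\mbD)$. Its determinant equals $\det(2^m M_0)=N=2^s u$, and unless $u=\pm1$ this is not a unit of $\mbD$, so the map is only an injective endomorphism of $\mbD^2$, not an automorphism. Such a map does carry the original dyadic triangle $T$ isomorphically onto its \emph{image}, but that image is in general a proper subgroupoid of the geometric dyadic triangle spanned by the image vertices --- exactly the phenomenon of the paper's example $T_a\subseteq 3\mbD\times\mbD$, which is algebraic but not geometric. So you have not shown that $T$ is isomorphic to the dyadic triangle with vertices $(0,0),(N,0),(0,N)$; you have only embedded $T$ into it. Indeed your conclusion cannot be right: it would make every dyadic triangle isomorphic to a right isosceles one, contradicting Proposition~\ref{P:classif}, which exhibits pairwise non-isomorphic right, hat and ``other'' types. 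Concretely, for the hat triangle with vertices $(0,0),(1,3),(2,0)$ one gets $\det M_0=-6$ and $\mathrm{adj}(M_0)$ sends $(1,3)\mapsto(-6,0)$ and $(2,0)\mapsto(0,-6)$; after sign changes and dividing by $2$ this is the right triangle $T_{0,3,3,0}$, which is \emph{not} isomorphic to the hat triangle $T_{1,3,2,0}$. The factor of $3$ in the determinant is precisely where the isomorphism is lost.

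Your instinct that ``one cannot simply invert'' was correct, but passing to the adjugate over-corrects and changes the isomorphism type. The admissible moves are only those of $\mathrm{GA}(2,\mbD)$: translations, scalings by $2^{\pm k}$ (the units of $\mbD$), and integer matrices of determinant $\pm 1$, i.e.\ $\mathrm{GL}(2,\mbZ)\leq\mathrm{GL}(2,\mbD)$. The argument (this lemma is quoted from \cite[L.~5.2]{MRS11}; the paper gives no proof of its own) runs along these lines: translate one vertex to the origin, multiply by a suitable $2^k$ to make $B$ and $C$ integral, and then find a unimodular integer matrix (possibly after re-choosing which vertex sits at the origin) placing both $B$ and $C$ in the closed first quadrant. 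That last normalization is the real content of the lemma and is exactly the step your adjugate construction bypasses; note also that no such argument can force $B$ and $C$ onto the coordinate axes in general, which is why Figure~\ref{F:5} allows arbitrary non-negative integer coordinates $(i,j)$ and $(m,n)$.
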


\begin{figure}[bht]
\begin{center}
\begin{picture}(170,140)(0,0)

\put(0,20){\vector(1,0){160}}
\put(20,0){\vector(0,1){140}}
\put(20,20){\line(1,1){60}}

\put(20,80){\line(1,0){120}}
\put(10,77){$n$}
\put(80,20){\line(0,1){100}}
\put(77,8){$i$}
\put(20,120){\line(1,0){120}}
\put(10,117){$j$}
\put(140,20){\line(0,1){100}}
\put(137,8){$m$}

\put(80,80){\circle*{3}}
\put(82,69){$G$}

\put(20,20){\circle*{5}}
\put(7,7){$A$}
\put(80,120){\circle*{5}}
\put(77,127){$B$}
\put(140,80){\circle*{5}}
\put(145,76){$C$}

\thicklines

\put(20,20){\line(2,1){120}}
\put(20,20){\line(3,5){60}}
\put(140,80){\line(-3,2){60}}

\end{picture}
\end{center}
\caption{}
\label{F:5}
\end{figure}

\noindent The integers $i, j, m, n$ in Figure~\ref{F:5} may be chosen so that $0 \leq i < m$,  $0 \leq  n < j$, and with odd $\gcd\{i,m\}$ and $\gcd\{j,n\}$. Such a triangle will be denoted $T_{i,j,m,n}$.

To provide a more detailed description of triangles $ABC$ located as in Lemma~\ref{L:firstquadr}, we need some further definitions.
An interval of $\mbD$ is of \emph{type $k$} if it is isomorphic to $\mbD_k$. A dyadic triangle has \emph{boundary type} $(r,s,t)$, where $r, s, t$ are odd positive integers, if its sides have respective types $r, s$ and $t$.
A dyadic triangle $ABC$ (as described in Lemma~\ref{L:firstquadr}) is of \emph{right} type if its shorter sides lie on the coordinate axes. It is determined uniquely up
to isomorphism by its boundary type. In particular, if the types of the shorter sides are $m$ and $j$, then the
hypotenuse is of type $\gcd\{m,j\}$. (See~\cite[\S~4]{MRS11}.) A dyadic triangle $ABC$ is of \emph{hat} type if $C$ lies on the $x$-axis. Note that a hat triangle is the union of two right triangles $ABD$ and $BDC$, where $D$ is the projection of $B$ onto the $x$-axis.

When a vertex $A$ of a dyadic triangle located at the origin is already chosen, the triangle $ABC$ described in Lemma~\ref{L:firstquadr}, may come in one of the three types described in the following proposition.

\begin{proposition}\cite[\S~5,\S~6]{MRS11},\cite[\S~1]{MMR19a}\label{P:classif}
Each dyadic triangle $ABC$ contained in the first quadrant, with $A$ located at the origin,
is isomorphic to a triangle $T_{i,j,m,n}$ of precisely one of the three following types:
\begin{itemize}
\item[(a)] right triangles $T_{0,j,m,0}$ with $j$ and $m$ odd and $j \leq m$;
\item[(b)]  hat triangles $T_{i,j,m,0}$ with $0 < i \leq m/2$, odd $j > 1$, and\\ $\gcd\{i, j\} \neq  j$;
\item[(c)] other triangles, such that neither of $i, n$ is zero, and moreover $j \leq m$, $\gcd\{i, j\} \notin \{i, j, 1\}$ and $\gcd\{m, n\} \notin \{m, n,1\}$.
\end{itemize}
\end{proposition}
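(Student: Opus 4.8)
The plan is to start from the normal form supplied by Lemma~\ref{L:firstquadr} and then to push it, using the automorphisms of $\mbD^2$ that still survive, into one of the three listed shapes, the cases being organised according to how many of the parameters $i,n$ vanish. So fix a dyadic triangle; by Lemma~\ref{L:firstquadr} we may assume it equals $T_{i,j,m,n}$ with $A=(0,0)$, $B=(i,j)$, $C=(m,n)$, $0\le i<m$, $0\le n<j$, and $\gcd\{i,m\}$, $\gcd\{j,n\}$ odd. The remaining symmetries are the diagonal reflection $\sigma\colon(x,y)\mapsto(y,x)$, the relabellings of the three vertices (each realised by a translation followed by an element of $\mr{GL}(2,\mbD)$, since the affine group acts transitively on affinely independent triples), the dilations $(x,y)\mapsto(x/2^a,\,y/2^b)$, and the unimodular shears. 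Two elementary moves will be used repeatedly: (i) if a side is a dyadic simplex, i.e.\ of type~$1$, then an element of $\mr{GL}(2,\mbZ)\subseteq\mr{GL}(2,\mbD)$ carries its primitive integer direction onto a coordinate axis; (ii) if the side $AB$ has direction $(i,j)$ with $i\mid j$ (resp.\ $j\mid i$), then a shear carries $B$ onto the $x$-axis (resp.\ the $y$-axis), and similarly for $AC$. Each move replaces the triangle by an isomorphic one with one more side lying along a coordinate axis.

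\emph{Existence of a representative.} Suppose first $i=n=0$: the legs $AB$, $AC$ lie on the axes, $j=\gcd\{j,0\}$ and $m=\gcd\{0,m\}$ are odd, $\sigma$ arranges $j\le m$, and the hypotenuse then has type $\gcd\{m,j\}$ by \cite[\S4]{MRS11}; this is case~(a). Suppose exactly one of $i,n$ vanishes, say $n=0<i$ after a possible use of $\sigma$: then $C=(m,0)$ lies on the $x$-axis, $j$ is odd, and $B$ is off the $y$-axis; move~(i) (when $j=1$) or move~(ii) (when $j\mid i$) returns us to the previous case, so we may assume $j>1$ and $\gcd\{i,j\}\ne j$, and the relabelling $A\leftrightarrow C$ replaces $i$ by $m-i$ while preserving all hypotheses --- note $\gcd\{m-i,m\}=\gcd\{i,m\}$ --- so we may take $0<i\le m/2$; this is case~(b). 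Finally suppose $i,n>0$: neither $AB$ nor $AC$ lies on an axis; whenever $\gcd\{i,j\}\in\{1,i,j\}$ (resp.\ $\gcd\{m,n\}\in\{1,m,n\}$) move~(i) or~(ii) at $B$ (resp.\ $C$) puts a side on an axis and reduces us to one of the earlier cases, so a triangle genuinely requiring this case has $\gcd\{i,j\}\notin\{1,i,j\}$ and $\gcd\{m,n\}\notin\{1,m,n\}$, and $\sigma$ composed with $B\leftrightarrow C$ sends $(i,j,m,n)$ to $(n,m,j,i)$, which lets us impose $j\le m$; this is case~(c).

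\emph{Disjointness, uniqueness, and the main obstacle.} It remains to show that the three families are pairwise non-isomorphic and that the displayed inequalities single out one representative in each; this is the content of the classification of \cite[\S\S5--6]{MRS11} refined in \cite[\S1]{MMR19a}, where type is shown to be an isomorphism invariant --- detected by the boundary type together with whether, at any vertex, the corner is unimodular --- and all quadruples producing isomorphic triangles are described. This last step is where the real work lies, together with the routine but delicate bookkeeping in the existence step: one must check that every move and relabelling again yields a quadruple obeying $0\le i<m$, $0\le n<j$ with $\gcd\{i,m\},\gcd\{j,n\}$ odd (re-dilating to restore oddness when a shear destroys it), and that the alternation of moves terminates in exactly one of the three shapes rather than cycling. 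Once the invariants of \cite{MRS11,MMR19a} are available, the argument closes.
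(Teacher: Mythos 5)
The paper itself offers no proof of this proposition: it is imported verbatim from \cite[\S\S5--6]{MRS11} and \cite[\S1]{MMR19a}, so there is no in-paper argument to compare yours against. Your existence reduction --- normalize via Lemma~\ref{L:firstquadr}, then use reflections, relabellings, shears and dilations to drive vertices onto the axes --- follows the same general lines as the cited source, and the relabelling bookkeeping you record ($A\leftrightarrow C$ replacing $i$ by $m-i$; $\sigma$ composed with $B\leftrightarrow C$ sending $(i,j,m,n)$ to $(n,m,j,i)$) is correct as far as it goes.

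The genuine gap sits exactly where you place the ``real work'', but it is more than deferred bookkeeping: the disjointness of the three cases cannot be obtained from the per-vertex conditions $\gcd\{i,j\}\notin\{1,i,j\}$, $\gcd\{m,n\}\notin\{1,m,n\}$ by the logic of your moves (i)--(ii). Any integer vector $(i,j)$ with $\gcd\{i,j\}=g$ is carried to $(g,0)$ by an element of $\mr{GL}(2,\mbZ)\subseteq\mr{GL}(2,\mbD)$; for instance $(12,15)\mapsto(3,0)$ under $\left(\begin{smallmatrix}-1&5\\1&-4\end{smallmatrix}\right)$, although $\gcd(12,15)=3\notin\{1,12,15\}$. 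So ``this vertex cannot be put on an axis'' is never a property of a single vertex: the obstruction is a joint condition on the pair $(B,C)$ --- where the second vertex lands when the first is straightened --- and it is precisely this joint analysis (the axiality criterion of \cite[Lemma~4.11]{MRS11}, refined in \cite{MMR19a}) that separates case (c) from cases (a) and (b). Concretely, the determinant~$-8$ automorphism $(x,y)\mapsto(-3x+4y,\,5x-4y)$ carries $T_{12,15,15,12}$, which satisfies every displayed condition of case (c), onto $T_{3,27,24,0}$, which satisfies every displayed condition of case (b); so whatever keeps the families apart is not captured by the listed gcd inequalities alone, and it appears nowhere in your sketch. The same issue undercuts termination in your existence step: a move of type (i) at $B$ generically destroys the axis position of $C$, so ``returns us to the previous case'' needs the axis-preserving (lower-triangular) refinement of the move, which is again the content of the cited lemma. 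In short, your proposal reproduces the easy half of the classification and outsources the half that constitutes the proposition.
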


For a fixed vertex $A$, the triangles $T_{i,j,m,n}$ described in Proposition~\ref{P:classif} are sometimes called \emph{representative triangles}, of types (a), (b) and (c), respectively.

As we will now restrict our attention to dyadic convex sets, we may sometimes drop the adjective ``dyadic'' when it is clear from the context.

\begin{remark} Proposition~\ref{P:classif} provides a classification, up to $\mathbb D$-module automorphism of the plane $\mbD^2$, of dyadic triangles with one chosen vertex located at the origin.
Let us call such triangles \emph{pointed triangles}. For each dyadic triangle $T$, there are three pointed translated triangles isomorphic to $T$. By Lemma~\ref{L:firstquadr} and Proposition~\ref{P:classif}, each one of these is then  isomorphic to the corresponding representative triangle located as shown in Lemma~\ref{L:firstquadr}. Note, however, that one of the three representative triangles of a triangle $T$ may be of type (c), while another may be of the hat type (b).

To see this, consider a triangle $T_{i,j,m,n}$ of type (c). Recall that the conditions provided in Proposition~\ref{P:classif}(c) guarantee that the points $B$ and $C$ are not axial. This means that there is no $\mathbb D$-module automorphism of the plane $\mbD^2$ which maps any of these points to a point on one of the axes. (See \cite[Lemma~4.11]{MRS11}.) To make the calculations easier, assume additionally that $j > i$ and $m > n$. Now translate the triangle $ABC$ by the vector $(-i,-j)$ to obtain the triangle with vertices $(-i,-j), (0,0), (m-i,n-j)$, then use the symmetry with respect to the axis $Ox$, and finally use the isomorphism of the plane $\mbD^2$ given by the matrix
\[
\begin{bmatrix}
1 & 0 \\
1 & 1
\end{bmatrix}\, .
\]
One obtains the triangle with vertices $A' = (j-i,j), B' = (0,0)$ and $C' = (m-n+j-i,j-n)$. The triangle $A'B'C'$ is contained in the first quadrant, and is not a right triangle. However it may be of type (b) or (c). Indeed, if $B =(i,j) = (12, 15)$ and $C = (m,n) = (15, 12)$, then $A' = (j-i,j) = (3,15)$ and $C' = (m-n+j-i,j-n) = (6,3)$, and both points $A'$ and $C'$ are axial. So for each of these vertices, one can find an affine space automorphism taking it to a point of an axis. For example the automorphism given by the matrix
\[
\begin{bmatrix}
1 & 1 \\
0 & -2
\end{bmatrix}
\]
maps the triangle $A'B'C'$ onto the triangle with vertices $A'' = (3,-27),\\ B'' = (0,0), C'' = (6,0)$. Then the symmetry with respect to the $x$-axis maps the triangle $A''B''C''$ onto the triangle with vertices
$(3,27), (0,0),\\ (6,0)$. The latter triangle is isomorphic to the representative hat triangle $T_{1,9,2,0}$ with vertices $A^o = (1,9), B^o = (0,0), C^o = (2,0)$.

However, if $B = (10,25)$ and $C = (15,9)$, then $A' = (15,25)$ and $C' = (21,16)$, which are not axial. Hence also the (representative) triangle $A'B'C'$ is of type (c).

The triangles obtained are illustrated on Figures~\ref{F:6}, \ref{F:7} and \ref{F:8}.

\begin{figure}[bht]
	\begin{center}
		\begin{picture}(300,200)(0,0)
			
		\put(-10,20){\vector(1,0){150}}
		\put(10,0){\vector(0,1){150}}
		
		\put(90,120){\circle*{5}}
			\put(92,122){$B=(12,15)$}
			
				\put(110,100){\circle*{5}}
				
			\put(116,100){$C=(15,12)$}
		\put(30,18){\line(0,1){4}}
		\put(27,8){$3$}
			\put( 8,40){\line(1,0){4}}
		\put(-2,37){$3$}
			\put(10,20){\circle*{5}}
		\put(0,7){$A$}		
		\put(180,20){\vector(1,0){100}}
\put(200,0){\vector(0,1){150}}

\put(220,120){\circle*{5}}
\put(222,122){$A'=(3,15)$}

\put(240,40){\circle*{5}}

\put(242,42){$C'=(6,3)$}
\put(220,18){\line(0,1){4}}
\put(217,8){$3$}
\put(198,40){\line(1,0){4}}
\put(188,37){$3$}
\put(200,20){\circle*{5}}
\put(187,7){$B'$}
\thicklines

\put(200,20){\line(1,5){20}}
\put(200,20){\line(2,1){40}}
\put(220,120){\line(1,-4){20}}

\put(10,20){\line(4,5){80}}
\put(10,20){\line(5,4){100}}
\put(90,120){\line(1,-1){20}}	
		\end{picture}
	\end{center}
	\caption{}
	\label{F:6}
\end{figure}

\begin{figure}[bht]
	\begin{center}
		\begin{picture}(300,150)(0,0)
		\put(20,0){\vector(0,1){130}}
		\put(0,100){\vector(1,0){100}}
	 	\put(40,98){\line(0,1){4}}
		\put(37,87){$3$}
		\put( 18,120){\line(1,0){4}}
		\put(10,115){$9$}
		\put(20,100){\circle*{5}}
		\put(0,88){$B''$}
		\put(60,100){\circle*{5}}
		\put(70,105){$C''=(6,0)$}
		\put(40,40){\circle*{5}}
		\put(43,24){$A''=(3,-27)$}	
				
		\put(200,20){\vector(1,0){100}}
		\put(220,0){\vector(0,1){130}}
		\put(250,18){\line(0,1){4}}
		\put(247,7){$1$}
		\put( 218, 50){\line(1,0){4}}
		\put(210,45){$3$}
		\put(220,20){\circle*{5}}
		\put(200,8){$B^o$}
		
		\put(280,20){\circle*{5}}
		
		\put(257,8){$C^o=(2,0)$}
		
		\put(250,110){\circle*{5}}
		\put(244,114){$A^o=(1,9)$}		
\thicklines
 		\put(20,100){\line(1,-3){20}}
 \put(60,100){\line(-1,-3){20}}
		\put(220,20){\line(1,3){30}}
		\put(280,20){\line(-1,3){30}}	
		\end{picture}
	\end{center}
	\caption{}
	\label{F:7}
\end{figure}

\begin{figure}[bht]
	\begin{center}
		\begin{picture}(330,200)(0,0)
		
		\put(-10,20){\vector(1,0){170}}
		\put(10,0){\vector(0,1){150}}
		
		\put(70,120){\circle*{5}}
		\put(72,122){$B=(10,25)$}
		
		\put(98,55){\circle*{5}}
		
		\put(103,55){$C=(15,9)$}
		\put(40,18){\line(0,1){4}}
		\put(37,8){$5$}
		\put( 8,40){\line(1,0){4}}
		\put(-2,37){$5$}
		\put(10,20){\circle*{5}}
		\put(-12,7){$A$}		
		\put(170,20){\vector(1,0){170}}
		\put(190,0){\vector(0,1){150}}
		
		\put(250,120){\circle*{5}}
		\put(204,126){$A'=(15,25)$}
		
		\put(283,83){\circle*{5}}
		
		\put(266,63){$C'=(21,16) $}
		
		\put(210,18){\line(0,1){4}}
		\put(207,8){$5$}
		\put(188,40){\line(1,0){4}}
		\put(178,37){$5$}
		\put(190,20){\circle*{5}}
		\put(168,7){$B'$}
		\thicklines
		
		\put(10,20){\line(3,5){60}}
		\put(190,20){\line(3,5){60}}
		\put(190,20){\line(3,2){92}}
		\put(70,120){\line(2,-5){27}}

		\put(250,120){\line(5,-6){30}}
		\put(10,20){\line(5,2){90}}
	
		\end{picture}
	\end{center}
	\caption{}
	\label{F:8}
\end{figure}

\end{remark}

The method used in the proof of Theorem~\ref{P:fingen} shows how to find generators of any representative triangle $T = T_{i,j,m,n}$. One needs generators of the sides and generators of any two-dimensional simplex $\mbS$ contained in $T$. Each side of the triangle $T$ is an interval, and so is generated by $2$ or $3$ elements (two vertices and possibly one additional generator), and the simplex has three generators.  In fact, we do not need all three generators of the simplex. As one of the vertices of $\mbS$, we can take any point $E$ in the interior of $T$ which is a barycentric combination of all three vertices of $T$.
Sometimes, even less number of generators is sufficient.
If $T$ is a right triangle, then the minimal set of generators of its sides provides a minimal set of generators of the triangle. Hence the minimal number of generators of a right triangle lies between $3$ and $6$.
For example,
the right triangle $T=T_{0,3,3,0}$  is minimally generated by the vertices and three additional  points $(0,1),\,(1,0)$ and $(2,1)$ (see Figure ~\ref{T:1}).
	\begin{figure}[bht]
	\begin{center}
		\begin{picture}(140,150)(0,0)

			\put(0,20){\vector(1,0){130}}
			\put(20,0){\vector(0,1){130}}
			\put(50,7){$1$}
			\put(50, 20){\circle*{5}}
			\put(10,45){$1$}
			\put(20, 50){\circle*{5}}
			\put(80, 50){\circle*{5}}
			\put(20, 50){\circle*{5}}
			\put(20,20){\circle*{5}}
			\put(10,10){$0$}
			\put(110,20){\circle*{5}}
			\put(114,25){$ (3,0)$}
			\put(20,110){\circle*{5}}
			\put(25,115){$ (0,3)$}
			\put(20,110){\line(1,-1){90}}
			\thicklines
			\put(20,110){\line(1,-1){90}}
			\put(20,20){\line(1,0){90}}
			\put(20,20){\line(0,1){90}}
			\put(20,50){\line(1,-1){30}}	
		\end{picture}
	\end{center}
	\caption{}
	\label{T:1}
\end{figure}

The hat triangle with vertices $(-1,0), (0,1), (0,3)$ is minimally generated by its vertices and the point $(0,1)$. On the other hand, seven generators are sufficient to generate the hat triangle of the next example.
\begin{example}\label{Ex:hattr}
The hat triangle $T=T_{3,15,6,0}$  is minimally generated by the vertices and four additional  points $(1,5),(5,5),(1,0)$ and $(1,1)$ (see Figure ~\ref{T:2}).
	\begin{figure}[bht]
	\begin{center}
		\begin{picture}(250,150)(0,0)

			\put(0,20){\vector(1,0){220}}
			\put(20,0){\vector(0,1){130}}
			\put(50,7){$1$}
			\put(50, 20){\circle*{5}}
			\put(10,45){$5$}
			\put(18, 50){\line(1,0){4}}
			\put(50, 50){\circle*{5}}
			\put(170, 50){\circle*{5}}
			\put(50, 30){\circle*{5}}
			\put(55,30){$ (1,1)$}
			\put(20,20){\circle*{5}}
			\put(10,10){$0$}
			\put(200,20){\circle*{5}}
			\put(204,25){$ (6,0)$}
			\put(110,110){\circle*{5}}
			\put(115,107){$ (3,15)$}
			\thicklines
			\put(20,20){\line(1,1){90}}
			\put(110,110){\line(1,-1){90}}
			\put(20,20){\line(1,0){180}}
			
			\put(20,20){\line(3,1){30}}
			\put(50,20){\line(0,1){10}}
		\end{picture}
	\end{center}
	\caption{}
	\label{T:2}
\end{figure}

\end{example}
	
For polygons, one may use a similar method as in the case of general dyadic triangles.
A polygon $P$ is generated by the generators of its sides and two generators of a (right) simplex $\mbS$ chosen similarly as in the case of dyadic triangles.

\begin{example}\label{Ex:pol}
Consider the polygon $P$ with the vertices of the triangle of Example~\ref{Ex:hattr} and one additional vertex $(3,-1)$. Then the points $(0,0), (1,0), (6,0)$ and $(3,-1)$ generate the triangle with vertices $(0,0), (6,0), (3,-1)$ and the polygon $P$ is minimaly generated by the generators of the hut triangle and the vertex $(3,-1)$.
\end{example}

\subsection{Dyadic semitriangles}

Next we consider semitriangles, i.e. se\-mi-polytopes closing to triangles. Without lose of generality, we assume that a semitriangle $S$ is a \emph{representative semitriangle}, with  a representative triangle $T = T_{i,j,m,n}$ as its closure. The one-dimensional walls of $S$ will be called its \emph{sides}. A semitriangle $S$ which is not a polytope has at least one side which is not an interval. Each side of $S$ is isomorphic to an interval; hence it is generated by $2$ or $3$ elements. (Recall that a minimal set of generators of $S$ must contain the minimal set of generators of the sides.) To find the remaining generators of $S$, one may use the method described in the proof of Proposition~\ref{P:spolfing}. Since anchors belong to the sides of $S$, they are generated by the generators of the sides. So we only need to add the remaining generators of the inner polytope $P$ of $S$. Since $P$ is a dyadic triangle, the minimal number of its generators may be found using the method described in section~\ref{S:dyadtriangles}.
As we have seen in Example~\ref{Ex:notdpol}, it is sometimes possible to find a smaller set of generators of a semitriangle.
We will show that, as in the case of triangles,  each semitriangle is generated by the generators of its maximal walls and generators of a two-dimensional simplex contained in it.

For any representative semitriangle $S$ in the plane $\mbD^2$, and any point $E$ in the interior of $S$ which is a barycentric combination of the three vertices of $S$, let $\mbS$ be a right two-dimensional simplex contained in $S$ with shorter sides parallel to the axes intersecting at $E$. The following lemma is a version of Lemma~\ref{L:lines}, formulated for semitriangles instead of polytopes.

\begin{lemma}\label{L:linestr}
Let $S$ be a (representative) semitriangle.
Then for any interior point $A \in S$
there exist a point $B$ on a side of $S$, a right (geometric) simplex $\mbS$ as described above, and a point $C$ in $\rm{int}(\mbS)$, such that $A$ belongs to the intersection $l(B,C) \cap S$ of $S$ and the (real) line $l(B,C)$ through $B$ and $C$.
\end{lemma}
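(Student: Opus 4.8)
The plan is to copy, almost line for line, the proof of Lemma~\ref{L:lines}, with the polytope $P$ there replaced by the semitriangle $S$, the real polytope $Q=\mr{conv}_{\mbR}(P)$ there replaced by the real triangle $\mr{conv}_{\mbR}(S)$, and the auxiliary simplex of Lemma~\ref{L:lines} replaced by the right geometric simplex $\mbS$ introduced just before the statement. The only step that genuinely needs to be adjusted is the location of the auxiliary point $B$, and there I would import the device used in the proof of Lemma~\ref{L:Slines} to deal with a side of $S$ that may fail to be geometric.

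First I would fix the data. Choose a point $E\ne A$ in the interior of $S$; being interior, $E$ is automatically a barycentric combination of the three vertices of $S$. Applying Corollary~\ref{C:subsimpl} to the geometric dyadic triangle $\mr{conv}_{\mbD}(S)$ at $E$, for all sufficiently large $k$ the points $E$, $E+\tfrac{1}{2^{k}}e_1$, $E+\tfrac{1}{2^{k}}e_2$ generate a geometric right two-dimensional simplex with right-angle vertex $E$ and legs parallel to the axes; increasing $k$ if necessary, we may assume in addition that this simplex $\mbS$ lies in the real interior of $\mr{conv}_{\mbR}(S)$, hence in $\mr{int}(S)\subseteq S$, and that $A\notin\mbS$. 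Put $Q=\mr{conv}_{\mbR}(S)$, a real triangle with dyadic vertices, and $\mbS^{r}=\mr{conv}_{\mbR}(\mbS)$.

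Next I would run the body of the proof of Lemma~\ref{L:lines}. Pick $C_1\in\mr{int}(\mbS)$ with $C_1\ne A$; the ray from $C_1$ through $A$ leaves $Q$ at a point $R_1$, and after a harmless perturbation of $C_1$ we may assume $R_1$ lies in the relative interior of a maximal wall $\oW$ of $Q$, so that in particular $A$ lies between $C_1$ and $R_1$. Write $\ell=\mr{aff}_{\mbR}(\oW)$, and choose $R_2\ne R_1$ on $\ell$ so close to $R_1$ that $R_2$ is still in the relative interior of $\oW$ and the line $l(R_2,A)$ still meets $\mr{int}(\mbS^{r})$. To locate $B$ I would argue as in the proof of Lemma~\ref{L:Slines}: since the side $\oW\cap S$ of $S$ need not be geometric, fix an isomorphism $\iota$ of affine $\mbD$-spaces from $\mr{aff}_{\mbD}(\oW\cap S)$ onto $\mbD$, extend it to an affine $\mbR$-space isomorphism $\ov{\iota}\colon\ell\to\mbR$, and use the Density Lemma~\ref{L:points} to get a dyadic point strictly between $\ov{\iota}(R_1)$ and $\ov{\iota}(R_2)$; its $\ov{\iota}$-preimage $B$ then lies in $\mr{aff}_{\mbD}(\oW\cap S)=\ov{\iota}^{\,-1}(\mbD)$ and strictly between $R_1$ and $R_2$ on $\ell$. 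By Proposition~\ref{P:wallsepol}(b), $\oW\cap S$ is a semipolytope in its affine $\mbD$-hull and hence shares the relative interior of $\oW$; so $B$, lying in that relative interior, belongs to the side $\oW\cap S$ of $S$. Finally, exactly as in Lemma~\ref{L:lines}, the line $l(A,B)$ meets $\mr{int}(\mbS^{r})$ in a segment with two distinct endpoints $R_3,R_4$, and, as $A$ and $B$ are both dyadic, the Density Lemma yields a dyadic point $C\in[R_3,R_4]$; since $\mbS$ is geometric, $C\in\mr{int}(\mbS^{r})\cap\mbD^{2}=\mr{int}(\mbS)$. As $A$ and $C$ both lie on $l(A,B)$, we obtain $l(B,C)=l(A,B)\ni A$, and by the choice of $R_1$ (and of $R_2$ near $R_1$) the point $A$ lies between $B$ and $C$; since $A\in S$, this gives $A\in l(B,C)\cap S$, as required.

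I do not expect a genuine obstacle: the entire content is that the proof of Lemma~\ref{L:lines} survives the two purely bookkeeping modifications above — placing $B$ on a side of $S$ that may not be geometric (handled exactly as in Lemma~\ref{L:Slines}, through the affine $\mbD$-hull of the side) and verifying that the dyadic point $C$ built inside the real simplex $\mbS^{r}$ actually lands in $\mr{int}(\mbS)$ (which is where geometricity of $\mbS$ is used). The only thing requiring a little care is the choice of $E$, hence of $\mbS$: taking $E\ne A$ and $\mbS$ small enough ensures $A\notin\mbS$, so that the argument of Lemma~\ref{L:lines} applies unchanged.
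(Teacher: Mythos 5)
Your proof is correct and follows essentially the same route as the paper's: both adapt the proof of Lemma~\ref{L:lines}, the only genuine modification being the placement of $B$ on a side that need not be geometric --- the paper handles this by explicit coordinates (the side lies on a real line $y=ax$ whose dyadic points coming from the side's affine $\mbD$-hull have the form $(d,ad)$ with $d\in m\mbD$, and these are dense in the line), whereas you import the affine-hull isomorphism device from Lemma~\ref{L:Slines}; these are the same density argument in different clothing. One small caveat: your parenthetical claim that an interior point $E$ is ``automatically a barycentric combination of the three vertices'' fails if, as the paper intends, the combination must have dyadic coefficients so that $E$ is generated by the vertices (e.g.\ $(1,1)$ in the triangle with vertices $(0,0),(3,0),(0,3)$ needs weight $1/3$), but this is harmless here since one may simply choose $E$ to be such a dyadic combination, e.g.\ a point of the form $x_0\circ(x_1\circ x_2)$ suitably adjusted so that $E\ne A$.
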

\begin{proof}
First note that each side of $S$ is contained in a real line going through two (dyadic) vertices of $S$. By the Density Lemma~\ref{L:points}, for any two points of this line there is a dyadic point located between them. We will show that such a point may be located on the appropriate side of $S$. Without loss of generality assume that a side $W$ of $S$ is contained in the (real) line $l$ given by an equation $y = ax$ for some dyadic number $a$, and going through the vertices of $W$. Note that there is a positive odd number $m$ such that $l \cap S = \{(d,ad) \mid d \in m\mbD\}$. Now if $R_1 = (r,ar)$ and $R_2 = (s,as)$  are any two distinct points of $l$ with $r < s$, then there are natural numbers $k$ and $n$ such that $r < km/2^n < s$. So if the points $R_1$ and $R_2$ lie between the vertices of $S$, then the point $B = (km/2^n, akm/2^n)$ belongs to the side of $S$. Let $\mbS$ be a right two-dimensional simplex contained in $S$ with shorter sides parallel to the axes, as described above, and such that the real line $l(A,B)$ has a nonempty intersection with $\mr{int}(\mbS_r)$. By the Density Lemma, there is a point $C$ on the line $l(A,B)$ located between any two (real) points of $l(A,B) \cap \mr{int}(\mbS_r)$.
\end{proof}

\begin{corollary}
Each semitriangle is generated by the generators of the sides and two vertices of a two-dimensional right simplex
contained in the  semitriangle.
\end{corollary}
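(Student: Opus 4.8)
The plan is to repeat the argument used for Theorem~\ref{P:fingen} and Proposition~\ref{P:spolfing}, but with Lemma~\ref{L:linestr} in place of Lemma~\ref{L:lines}, exploiting the freedom in the choice of the anchoring simplex $\mbS$ so that only two of its three vertices need be adjoined to the generating set. Fix a representative semitriangle $S$ with vertices $P_0,P_1,P_2$. First I would recall that every side of $S$ is a subgroupoid isomorphic to a dyadic interval, hence generated by two or three elements, and that $P_0,P_1,P_2$ occur among those side generators. Then choose $E=\lambda_0P_0+\lambda_1P_1+\lambda_2P_2$ with all $\lambda_i$ positive dyadic numbers summing to $1$; then $E$ is an interior point of $S$ lying in the subgroupoid $\langle P_0,P_1,P_2\rangle$, so $E$ is already generated by the generators of the sides. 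Let $\mbS$ be the right two-dimensional simplex with right-angle vertex $E$ and legs $[E,E_1]$, $[E,E_2]$ parallel to the coordinate axes, of common dyadic length $2^{-k}$ with $k$ large enough that $\mbS\subseteq S$; as in the proof of Corollary~\ref{C:subsimpl}, $\mbS$ is a geometric dyadic simplex.

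Let $G$ be the subgroupoid of $S$ generated by the generators of all the sides of $S$ together with the two vertices $E_1,E_2$ of $\mbS$. Since $E\in G$, all three vertices of $\mbS$ lie in $G$, and because $\mbS$ is a dyadic simplex it is generated by its vertices (Lemma~\ref{L:simp}), so $\mbS\subseteq G$; the whole boundary of $S$ lies in $G$ as well. It then remains to show that every interior point $A$ of $S$ with $A\notin\mbS$ belongs to $G$. Running the construction of Lemma~\ref{L:linestr} with this prescribed $\mbS$, one obtains a point $B$ on a side of $S$ and a dyadic point $C\in\mr{int}(\mbS)$ with $A\in l(B,C)\cap S$, so that $B,C\in G$; we may assume $A\in[B,C]$. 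The set $\mbS\cap l(B,C)$ is a one-dimensional geometric dyadic convex set, so by Corollary~\ref{C:subsimpl} it contains a one-dimensional geometric simplex $[D,C]$ with $D$ between $A$ and $C$; then $D\in\mbS\subseteq G$. Applying Lemma~\ref{L:3gen} to the collinear dyadic points $B,D,C$, one of whose sub-intervals, namely $[D,C]$, is a simplex, we conclude that $\{B,C,D\}$ generates the dyadic interval $[B,C]\ni A$. Hence $A\in G$, so $G=S$, which proves that $S$ is generated by the generators of its sides together with the two vertices $E_1,E_2$ of the right simplex $\mbS$.

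The step I expect to be the main obstacle is the geometric core of the appeal to Lemma~\ref{L:linestr}: for a given interior point $A$, producing a real line through $A$ that meets a side of $S$ in a dyadic point \emph{of that side} and also passes through the interior of the \emph{prescribed} simplex $\mbS$. This is exactly the content of Lemma~\ref{L:linestr}, established as in Lemma~\ref{L:lines}: a generic real point $C_1$ in the interior of the real simplex $\mr{conv}_{\mbR}(\mbS)$ fixes a candidate direction, the line $l(A,C_1)$ leaves $S$ through the relative interior of a side at a real point $R_1$, the Density Lemma~\ref{L:points} supplies a dyadic point $B$ of that side near $R_1$, and since the condition that $l(A,B)$ pass through the interior of $\mr{conv}_{\mbR}(\mbS)$ is open it survives this perturbation; a last application of the Density Lemma places a dyadic point $C$ of $l(A,B)$ in that interior. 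The remaining verifications (that $\mbS$ is geometric, that $E$ lies in $\langle P_0,P_1,P_2\rangle$ so that it need not be listed as a separate generator, that $\mbS\cap l(B,C)$ is geometric and one-dimensional, and the final reduction via Lemma~\ref{L:3gen}) are routine. As Example~\ref{Ex:notdpol} shows, the generating set obtained in this way need not be minimal.
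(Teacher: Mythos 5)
Your proof is correct and follows essentially the same route the paper intends: the corollary is stated without a written proof, but it is meant to follow from Lemma~\ref{L:linestr} together with the generation argument of Theorem~\ref{P:fingen}/Proposition~\ref{P:spolfing}, with the key observation---which you make explicitly---that the right-angle vertex $E$ of $\mbS$ is a dyadic barycentric combination of the three vertices of $S$ and hence already lies in the subgroupoid generated by the side generators, so only the two remaining vertices of $\mbS$ need be adjoined.
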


Note that if $S$ is a (representative) semitriangle closing to a right triangle $ABC$ with the interval $[A,C]$ as the side of $S$, then $S$ is generated by the generators of the sides and only one additional generator of a right simplex $\mbS$ with one of its shorter sides on the $x$-axis. We have a similar situation if $S$ is a semitriangle closing to a hat triangle.

\begin{example}
Consider the semitriangle $S$ closing the hat triangle of Example~\ref{Ex:hattr} consisting of the interior of the hat triangle, the whole interval joining $(0,0)$ and $(6,0)$, the subgroupoid generated by $(0,0)$ and $(3,15)$, and the subgroupoid generated by $(6,0)$ and $(3,15)$. Then $S$ is generated by the three vertices and the points $(1,0)$ and $(1,1)$.
\end{example}

\subsection*{Acknowledgement}

The authors are grateful to anonymous referees for helpful comments on an earlier version of this paper.


\begin{thebibliography}{99}

\bibitem{B05}
G. Bergman,
On lattices of convex sets in $\mbR^n$,
{\em Algebra Unversalis} {\bf53} (1975) 357--395.

\bibitem{AB83}
A. Br{\o}ndsted,
{\em An Introduction to Convex Polytopes,}
Springer Verlag, New York, 1983.

\bibitem{C75}
B. Cs\'{a}k\'{a}ny,
Varieties of affine modules,
{\em Acta Sci. Math.} {\bf 37} (1975) 3--10.

\bibitem{CR13}
G. Cz\'{e}dli, A. Romanowska,
Generalized convexity and closure conditions,
{\em Int. J. of Algebra and Computation} {\bf 23} (2013) 1805--1835.

\bibitem{JK76}
J. Je\v{z}ek, T. Kepka,
Free commutative idempotent abelian groupoids and quasigroups,
{\em Acta Univ. Carolin. Math. Phys.} {\bf 17} (1976) 13--19.

\bibitem{JK83}
J. Je\v{z}ek, T. Kepka,
{em Medial Groupoids,}
Academia, Praha, 1983.

\bibitem{MMR19}
K. Matczak, A. Mu\'{c}ka, A. Romanowska,
Duality for dyadic intervals,
{\em Int. J. of Algebra and Computation},  {\bf 29} (2019) 41--60.

\bibitem{MMR19a}
K. Matczak, A. Mu\'{c}ka, A. Romanowska,
Duality for dyadic triangles,
{\em Int. J. of Algebra and Computation}, {\bf 29} (2019) 61--83.

\bibitem{MR04}
K. Matczak, A. Romanowska,
Quasivarieties of cancellative commutative binary modes,
{\em Studia Logica} {\bf78} (2004) 321--335.

\bibitem{MR05}
K. Matczak, A. Romanowska,
Irregular quasivarieties of commutative binary modes,
{\em Int. J. of Algebra and Computation} {\bf 15} (2005) 699--715.

\bibitem{MRS11}
K. Matczak, A. Romanowska, J.D.H. Smith,
Dyadic polygons,
{\em Int. J. of Algebra and Computation} {\bf21} (2011) 387--408.

\bibitem{MR16}
A. Mu\'{c}ka, A. Romanowska,
Duality for quasi-polytopes,
{\em Journ. of the Australian Math. Society} {\bf 101} (2016), 95--117.

\bibitem{N70}
W.D. Neumann,
On the quasivariety of convex subsets of affine spaces,
{\em Arch. Math.} {\bf21} (1970), 11--16.

\bibitem{PRS03}
K. Pszczo\l a, A. Romanowska, J.D.H. Smith,
Duality for some free modes,
{\em Discuss. Math. Gen. Algebra Appl.} {\bf23} (2003), 45--62.

\bibitem{R18}
A. Romanowska,
Convex sets and barycentric algebras,
in: Nonassociative mathematics and its applications, 243-–259, {\em Contemp. Math.} {\bf 721}, Amer. Math. Soc., Providence, RI, 2019.

\bibitem{RS85}
A.B. Romanowska, J.D.H. Smith,
{\em Modal Theory,}
Heldermann Verlag, Berlin, 1985.

\bibitem{RS02}
A.B. Romanowska, J.D.H. Smith,
{\em Modes,}
World Scientific, Singapore, 2002.

\end{thebibliography}
\end{document}